\newtheorem{lem}{Lemma}
\newtheorem{prop}[lem]{Proposition}
\newtheorem{thm}[lem]{Theorem}
\newtheorem{cor}[lem]{Corollary}
\newtheorem{conj}[lem]{Conjecture}
\newtheorem{rem}[lem]{Remark}
\def\Ex{{\mathbb E}}
\def\Pr{{\mathbb P}}
\def\er{{\mathbb R}}
\def\ve{\varepsilon}
\def\diam{\mathrm{diam}}
\title{Hanson-Wright Inequality in Banach Spaces}
\author{Rados{\l}aw Adamczak}
\author{Rafa{\l} Lata{\l}a}
\author{Rafa{\l} Meller}
\address[R.A.]{Institute of Mathematics of the Polish Academy of Sciences, \'Sniadeckich 8, 00-656 Warsaw, Poland}
\address[R.A., R.L., R.M.]{Institute of Mathematics, University of Warsaw, Banacha 2, 02--097 Warsaw, Poland.}
\email{r.adamczak@mimuw.edu.pl, r.latala@mimuw.edu.pl, r.meller@mimuw.edu.pl}
\thanks{R.A.\ was supported by National Science Centre Poland grant 2015/18/E/ST1/00214,
R.L.\ and R.M.\ were supported by the National Science Centre Poland grant 2015/18/A/ST1/00553.
This work was initiated in the Fall of 2017, while R.L. was in residence at the Mathematical Sciences Research Institute in Berkeley, California, supported by NSF grant DMS-1440140.}
\keywords{tail and moment inequalities, quadratic forms, Hanson-Wright inequality, Gaussian chaoses, Gaussian processes, metric entropy}
\subjclass[2010]{Primary: 60E15, Secondary: 60G15, 60B11}
\begin{document}

\maketitle

\begin{abstract}
We discuss two-sided bounds for moments and tails of quadratic forms in Gaussian random variables
with values in Banach spaces. We state a natural conjecture and show that it holds up to additional logarithmic factors.
Moreover in a certain class of Banach spaces (including $L_r$-spaces) these logarithmic factors may be eliminated. As a
corollary we derive upper bounds for tails and moments of quadratic forms in subgaussian random variables, which
extend the Hanson-Wright inequality.

\end{abstract}

\section{Introduction and main results}

The Hanson-Wright inequality gives an upper bound for tails of real quadratic forms in independent subgaussian random variables. Recall that a random variable $X$ is called $\alpha$-\emph{subgaussian} if for every $t > 0$, $\Pr(|X| \ge t) \le 2\exp(-t^2/2\alpha^2)$. The Hanson-Wright inequality states
that for any sequence of independent mean zero $\alpha$-subgaussian random variables $X_1,\ldots,X_n$ and any symmetric matrix $A=(a_{ij})_{i,j\leq n}$ one has
\begin{equation}
\label{eq:HW}
\Pr\left(\left|\sum_{i,j=1}^n a_{ij}(X_iX_j-\Ex (X_iX_j))\right|\geq t\right)
\leq 2\exp\left(-\frac{1}{C}\min\left\{\frac{t^2}{\alpha^4\|A\|_{\mathrm{HS}}},\frac{t}{\alpha^2\|A\|_{\mathrm{op}}}\right\}\right),
\end{equation}
where in the whole article we use the letter $C$ to denote universal constants  which may differ at each occurrence. Estimate \eqref{eq:HW}
was essentially established in \cite{HW} in the symmetric and in \cite{W} in the mean zero case (in fact in both papers the operator norm of $A$ was replaced by the operator norm of $(|a_{ij}|)_{i,j\le n}$, which in general could be much bigger,
proofs of \eqref{eq:HW} may be found in \cite{BM} and \cite{RV}).

The Hanson-Wright inequality has found numerous applications in high-dimensional probability and statistics, as well as in random matrix theory (see e.g., \cite{vershynin_2018}). However in many problems one faces the need to analyze not a single quadratic form but a supremum of a collection of them or equivalently a norm of a quadratic form with coefficients in a Banach space. Let us mention a few recent developments, relying on the analysis of suprema of quadratic forms in independent random variables. For instance in \cite{KMR} estimates for such suprema are applied to the theory of compressed sensing with circulant  type matrices. In \cite{A} they are used to give an alternate proof of the result from \cite{KL} concerning concentration properties for sample covariance operators corresponding to Banach space-valued Gaussian random variables. In \cite{DE} one can find applications to $M$-estimation in the context of linear random-effects models.

The inequalities concerning tails of Banach space valued quadratic forms available in the literature (see e.g., ineq. \eqref{eq:Borel-Arcones-Gine} below) are usually expressed in terms of quantities which themselves are troublesome to analyze. The main objective of this article is to provide estimates on vector-valued quadratic forms which can be applied more easily and are of optimal form.

The main step in modern proofs  of the Hanson-Wright inequality is to get a bound similar to \eqref{eq:HW} in the Gaussian case. The extension to general subgaussian variables is then obtained with use of the by now standard tools of probability in Banach spaces, such as decoupling, symmetrization and the contraction principle. Via Chebyshev's
inequality to obtain a tail estimate it is enough to bound appropriately the moments of quadratic forms
in the case when $X_i=g_i$ are standard Gaussian $\mathcal{N}(0,1)$ random variables.
One may in fact show that (cf.\ \cite{LaSM,LaAoP})
\begin{equation}
\left(\Ex\left|\sum_{i,j=1}^n a_{ij}(g_ig_j-\delta_{ij})\right|^p\right)^{1/p}
\sim p\|A\|_{\mathrm{op}}+\sqrt{p}\|A\|_{\mathrm{HS}}, \label{mom}
\end{equation}
where $\delta_{ij}$ is the Kronecker delta, and $\sim$ stands for a comparison up to universal multiplicative constants.

Following the same line of arguments, in order to extend the Hanson-Wright bound to the Banach space setting we first estimate moments of centered vector-valued Gaussian quadratic forms, i.e. quantities
\[
\left\|\sum_{i,j=1}^n a_{ij}(g_ig_j-\delta_{ij})\right\|_p=\left(\Ex\left\|\sum_{i,j=1}^n a_{ij}
(g_ig_j-\delta_{ij})\right\|^p\right)^{1/p}, \quad p\geq 1,
\]
where $A=(a_{ij})_{i,j\leq n}$ is a symmetric matrix with values in a normed space $(F,\|\ \|)$.
We note that (as mentioned above) there exist two-sided estimates for the moments of Gaussian quadratic forms with vector-valued coefficients. To the best of our knowledge they were obtained first in \cite{Bo} and then they were reproved in various context by several authors (see e.g., \cite{AG,Le,LT}).
They state that for $p\ge 1$,
\begin{align}
\notag
\left\|\sum_{ij}a_{ij}(g_ig_j-\delta_{ij})\right\|_p \sim&\ \Ex \left\|\sum_{ij}a_{ij}(g_ig_j-\delta_{ij})\right\| + \sqrt{p}\Ex \sup_{\|x\|_2\le 1} \left \| \sum_{ij} a_{ij}x_i g_j\right\|
\\
\label{eq:Borel-Arcones-Gine}
&+ p\sup_{\|x\|_2\le 1,\|y\|_2\le 1} \left\| \sum_{ij}a_{ij} x_i y_j\right\|.
\end{align}

Unfortunately the second term on the right hand side of \eqref{eq:Borel-Arcones-Gine} is usually difficult to estimate. The main effort in this article will be to replace it by quantities which even if still involve expected values of Banach space valued random variables in many situations can be handled more easily. More precisely, we will obtain inequalities in which additional suprema over Euclidean spheres are placed outside the expectations, which reduces the complexity of the involved stochastic processes. As one of the consequences we will derive two-sided bounds in $L_r$ spaces involving only purely deterministic quantities.

\medskip

Our first observation is a simple lower bound

\begin{prop}
\label{prop:lower2d}
Let $(a_{ij})_{i,j \leq n}$ be a symmetric matrix with values in a normed space \mbox{$(F,\| \ \cdot \|)$}. Then for any $p\geq 1$ we have
\begin{align*}
\left\|\sum_{ij}a_{ij}(g_ig_j-\delta_{ij})\right\|_p
&\geq
\frac{1}{C}\Bigg(\Ex\left\|\sum_{ij}a_{ij}(g_ig_j-\delta_{ij})\right\|
+\sqrt{p}\sup_{\|x\|_2\leq 1}\Ex\left\|\sum_{i\neq j}a_{ij}x_ig_j\right\|
\\
&\phantom{aaaaa}+\sqrt{p}\sup_{\|(x_{ij})\|_2\leq 1}\left\|\sum_{ij}a_{ij}x_{ij}\right\|
+p\sup_{\|x\|_2\leq 1,\|y\|_2\leq 1}\left\|\sum_{ij}a_{ij}x_iy_j\right\|
\Bigg).
\end{align*}
\end{prop}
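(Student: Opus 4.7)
We plan to treat the four lower bounds separately.

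The first term is immediate from $\|Y\|_p \geq \|Y\|_1 = \Ex\|Y\|$ for $p \geq 1$, where $Y := \sum_{ij}a_{ij}(g_ig_j-\delta_{ij})$. The third and fourth terms come from the scalar moment estimate \eqref{mom} applied coordinatewise via Hahn-Banach. For every $\varphi \in F^*$ of norm at most one, $\varphi(Y) = \sum_{ij}\varphi(a_{ij})(g_ig_j-\delta_{ij})$ is a centered scalar Gaussian quadratic form, so \eqref{mom} yields
\[
\|Y\|_p \geq \|\varphi(Y)\|_p \geq C^{-1}\bigl(p\|\varphi(A)\|_{\mathrm{op}} + \sqrt{p}\|\varphi(A)\|_{\mathrm{HS}}\bigr),
\]
where $\varphi(A)$ is the scalar matrix $(\varphi(a_{ij}))_{ij}$. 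Taking $\sup_\varphi$ and exchanging suprema (Hahn-Banach in $F$, self-duality of $\ell_2$) identifies $\sup_\varphi\|\varphi(A)\|_{\mathrm{op}}$ with $\sup_{\|x\|_2,\|y\|_2 \leq 1}\|\sum a_{ij}x_iy_j\|$ (the prefactor of term~4) and $\sup_\varphi\|\varphi(A)\|_{\mathrm{HS}}$ with $\sup_{\|(z_{ij})\|_2 \leq 1}\|\sum a_{ij}z_{ij}\|$ (the prefactor of term~3).

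The second term is the most delicate; I would use a reflection argument. Fix $x$ with $\|x\|_2 = 1$ (WLOG by scaling). First, a Rademacher symmetrization $g \to \varepsilon g$ combined with Jensen conditional on $g$ gives $\|Y_{\mathrm{diag}}\|_p \leq \|Y\|_p$ with $Y_{\mathrm{diag}} := \sum_i a_{ii}(g_i^2-1)$, hence $\|Y_{\mathrm{off}}\|_p \leq 2\|Y\|_p$ for $Y_{\mathrm{off}} := \sum_{i\neq j}a_{ij}g_ig_j$. Decomposing $g = Hx + g^\perp$ with $H := \langle x, g\rangle \sim \mathcal{N}(0,1)$ independent of $g^\perp$ and using the symmetry of $A$, one expands
\[
Y_{\mathrm{off}} = H^2 B_{\mathrm{off}} + 2H W_{\mathrm{off}} + R_{\mathrm{off}},
\]
where $B_{\mathrm{off}} := \sum_{i\neq j}a_{ij}x_ix_j$, $W_{\mathrm{off}} := \sum_{i\neq j}a_{ij}x_ig_j^\perp$, and $R_{\mathrm{off}} := \sum_{i\neq j}a_{ij}g_i^\perp g_j^\perp$. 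The reflection $g \mapsto g - 2Hx$ is orthogonal, so preserves the distribution of $g$; it flips $H$ and fixes $g^\perp$ (hence $W_{\mathrm{off}}$ and $R_{\mathrm{off}}$), and the triangle inequality yields $\|4HW_{\mathrm{off}}\|_p \leq 2\|Y_{\mathrm{off}}\|_p \leq 4\|Y\|_p$. Since $H$ and $W_{\mathrm{off}}$ are independent, $\||H|\|_p \geq c\sqrt{p}$, and $\|W_{\mathrm{off}}\|_p \geq \Ex\|W_{\mathrm{off}}\|$, this gives $\|Y\|_p \geq c\sqrt{p}\,\Ex\|W_{\mathrm{off}}\|$.

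To conclude, the substitution $g_j^\perp = g_j - x_jH$ produces the identity $W_{\mathrm{off}} = U - H B_{\mathrm{off}}$, where $U := \sum_{i\neq j}a_{ij}x_ig_j$ is exactly the vector appearing inside term~2, so $\Ex\|U\| \leq \Ex\|W_{\mathrm{off}}\| + \|B_{\mathrm{off}}\|$. Since $(x_ix_j\mathbf{1}_{i\neq j})_{ij}$ has Euclidean norm at most $\|x\|_2^2 \leq 1$, $\|B_{\mathrm{off}}\|$ is bounded by the prefactor of the third term, and the third-term lower bound already proved yields $\sqrt{p}\|B_{\mathrm{off}}\| \leq C\|Y\|_p$. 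Hence $\sqrt{p}\Ex\|U\| \leq C'\|Y\|_p$, and taking $\sup_x$ finishes term~2. The main obstacle is precisely term~2: the reflection $g \mapsto g - 2Hx$ is what extracts the $\sqrt{p}$ factor from $H$, but one then has to clean up the leftover drift $H B_{\mathrm{off}}$, which succeeds only because the third-term bound is available in parallel.
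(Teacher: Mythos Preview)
Your argument is correct. Terms 1, 3, and 4 are handled exactly as in the paper. For term 2 you take a genuinely different route.

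The paper proceeds as follows for term 2: after the same Rademacher-symmetrization step that gives $\|Y_{\mathrm{off}}\|_p\leq 2\|Y\|_p$ (this is Lemma~\ref{lem:lower1}), it applies the decoupling inequality \eqref{eq:decoupling} to pass to $\|\sum_{i\neq j}a_{ij}g_ig_j'\|_p$ with independent copies $(g_j')$, and then conditions on $(g_i)$ to view this as a linear form in $(g_j')$, invoking the elementary one-dimensional bound $\|\sum_j b_jg_j'\|_p\geq c\sqrt{p}\sup_{\|x\|_2\leq 1}\|\sum_j b_jx_j\|$ (this is \eqref{ine:1}). A final application of Jensen and symmetry of $(a_{ij})$ finishes.

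Your reflection argument avoids decoupling entirely: by writing $g=Hx+g^\perp$ and exploiting the orthogonal reflection $H\mapsto -H$ you isolate $HW_{\mathrm{off}}$ directly, and independence of $H$ from $g^\perp$ supplies the $\sqrt{p}$ factor. The price is the residual drift $HB_{\mathrm{off}}$, which you correctly absorb using the already-established third-term bound. This is more hands-on but arguably more elementary, since it does not appeal to the decoupling theorem as a black box; the paper's route is cleaner and needs no cleanup step, but imports a heavier tool.
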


This motivates the following conjecture.

\begin{conj}
\label{conj1_2d}
 Under the assumptions of Proposition \ref{prop:lower2d} we have
\begin{align*}
\left\|\sum_{ij}a_{ij}(g_ig_j-\delta_{ij})\right\|_p
&\leq
C\Bigg(\Ex\left\|\sum_{ij}a_{ij}(g_ig_j-\delta_{ij})\right\|
+\sqrt{p}\sup_{\|x\|_2\leq 1}\Ex\left\|\sum_{i\neq j}a_{ij}x_ig_j\right\|
\\
&\phantom{aaaaa}+\sqrt{p}\sup_{\|(x_{ij})\|_2\leq 1}\left\|\sum_{ij}a_{ij}x_{ij}\right\|
+p\sup_{\|x\|_2\leq 1,\|y\|_2\leq 1}\left\|\sum_{ij}a_{ij}x_iy_j\right\|
\Bigg).
\end{align*}
\end{conj}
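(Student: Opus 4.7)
The plan is to use decoupling and conditioning to reduce the bound to a known estimate for linear Gaussian sums in $F$, and then to handle the remaining ``supremum inside the expectation'' term by a chaining argument. The main obstacle is precisely this last chaining step, which in a general Banach space introduces logarithmic losses and is responsible for the gap between the conjecture and the theorems that the paper actually proves.

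Concretely, let $g'=(g'_1,\ldots,g'_n)$ be an independent copy of $g$ and write $D=\sum_{ij}a_{ij}g_ig'_j$. After isolating the diagonal $\sum_i a_{ii}(g_i^2-1)$, which can be treated separately, a standard Gaussian decoupling inequality reduces the bound on the original chaos to a bound on $\|D\|_p$. Conditioning on $g$, $D$ is a linear Gaussian sum $\sum_j B_j(g)g'_j$ in $F$ with random coefficients $B_j(g)=\sum_i a_{ij}g_i$, and the moment bound for Banach-valued Gaussian vectors gives
\[
\|D\|_{L^p(g')}\leq C\,\Ex_{g'}\Big\|\sum_j B_jg'_j\Big\|+C\sqrt{p}\,\sigma(g),\qquad \sigma(g):=\sup_{\|\varphi\|_*\leq 1}\Big(\sum_j\varphi(B_j(g))^2\Big)^{1/2}.
\]
A routine duality computation identifies $\sigma(g)=\sup_{\|h\|_2\leq 1}\|\sum_{ij}a_{ij}h_jg_i\|$, and the map $g\mapsto\sigma(g)$ is Euclidean-Lipschitz with constant at most $\sup_{\|h\|_2,\|k\|_2\leq 1}\|\sum_{ij}a_{ij}h_jk_i\|$. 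Gaussian concentration for $\sigma(g)$ therefore accounts for the operator-type fourth term. Bounding the $L^p(g)$-moment of $F(g):=\Ex_{g'}\|\sum_jB_jg'_j\|$ by the same principle (its Lipschitz constant is dominated by an analogous quantity) yields the mean term.

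After these steps one obtains essentially the standard bound \eqref{eq:Borel-Arcones-Gine}, namely
\[
\|D\|_p\leq C\,\Ex\Big\|\sum_{ij}a_{ij}(g_ig_j-\delta_{ij})\Big\|+C\sqrt{p}\,\Ex\sup_{\|h\|_2\leq 1}\Big\|\sum_{ij}a_{ij}h_ig_j\Big\|+Cp\sup_{\|h\|_2,\|k\|_2\leq 1}\Big\|\sum_{ij}a_{ij}h_ik_j\Big\|.
\]
The conjecture would then follow from the $F$-valued Gaussian estimate
\[
\Ex\sup_{\|h\|_2\leq 1}\Big\|\sum_{ij}a_{ij}h_ig_j\Big\|\leq C\sup_{\|h\|_2\leq 1}\Ex\Big\|\sum_{ij}a_{ij}h_ig_j\Big\|+C\sup_{\|(x_{ij})\|_2\leq 1}\Big\|\sum_{ij}a_{ij}x_{ij}\Big\|,
\]
which asserts that the Gaussian complexity of the process $h\mapsto\sum_{ij}a_{ij}h_ig_j$ over the Euclidean ball is governed, up to a universal constant, by its pointwise expectation plus a single deterministic ``Hilbert--Schmidt'' quantity. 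This is the heart of the matter: generic chaining/majorizing measures in $F$ produce Talagrand's $\gamma_2$ functional on $B_2^n$ with an $F$-valued random metric, and its comparison to the single deterministic right-hand quantity generically costs a logarithmic factor in the dimension. In Banach spaces with sufficient type or cotype (such as $L_r$) this loss can be avoided, which is presumably the mechanism behind the sharper estimates the paper obtains in that setting. A proof of the conjecture in full generality would seem to require a new principle that exploits the specific second-chaos structure rather than treating the inner linear Gaussian sum as an abstract Banach-valued process.
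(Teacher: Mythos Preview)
The statement is a \emph{conjecture}; the paper does not prove it in general, only up to $\log(ep)$ factors (Theorems~\ref{thm:uppper2d1} and~\ref{thm:upper2d2}) and in special classes of spaces. You correctly recognise this and give an outline of the reduction that matches the paper's own: split off the diagonal, decouple, use Gaussian concentration to arrive at the Borel--Arcones--Gin\'e bound \eqref{eq:Borel-Arcones-Gine}, and then isolate the problematic middle term $\sqrt{p}\,\Ex\sup_{\|h\|_2\le 1}\|\sum_{ij}a_{ij}h_ig_j\|$. This is exactly the route of Section~4, culminating in Conjecture~\ref{conj:supgauss}.

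Two points of precision. First, the ``remaining inequality'' you isolate,
\[
\Ex\sup_{\|h\|_2\le 1}\Big\|\sum_{ij}a_{ij}h_ig_j\Big\|
\le C\sup_{\|h\|_2\le 1}\Ex\Big\|\sum_{ij}a_{ij}h_ig_j\Big\|
+C\sup_{\|(x_{ij})\|_2\le 1}\Big\|\sum_{ij}a_{ij}x_{ij}\Big\|,
\]
is strictly stronger than what is needed. The paper's reduction (display \eqref{conj:supgauss0} and Conjecture~\ref{conj:supgauss}) allows two additional $p$-dependent terms on the right, namely $p^{-1/2}\Ex\|\sum_{ij}a_{ij}g_ig'_j\|$ and $\sqrt{p}\sup_{\|x\|_2,\|y\|_2\le 1}\|\sum_{ij}a_{ij}x_iy_j\|$; after multiplying by $\sqrt{p}$ these are absorbed into the first and last terms of the conjecture. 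These helper terms are what make the chaining in Section~\ref{sec:supgauss} feasible at all, so you should not discard them. Second, the logarithmic loss the paper incurs is in $p$, not in the dimension: the chaining of Proposition~\ref{osz:main} costs $\log(ep)$, coming from the number of scales between $p$ and the level at which the weaker $r^{-1/4}$ Sudakov-type bound for chaoses (Theorem~\ref{thm:minchaos2d}) takes over. Finally, the mechanism that removes the loss in $L_r$ is not type/cotype per se but the comparison \eqref{war} between $\Ex\|\sum_{i\neq j}a_{ij}g_{ij}\|$ and $\Ex\|\sum_{i\neq j}a_{ij}g_ig_j\|$, which lets Theorem~\ref{thm:upper2d2} absorb the extra linear-chaos term.
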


We are able to show that the conjectured estimate holds up to logarithmic factors.

\begin{thm}
\label{thm:uppper2d1}
Let $(a_{ij})_{i,j \leq n}$ be a symmetric matrix with values in a normed space $(F,\| \ \cdot \|)$. Then for any $p\geq 1$ the following two estimates hold
\begin{align}
\notag
\left\|\sum_{ij}a_{ij}(g_ig_j-\delta_{ij})\right\|_p
&\leq
C\Bigg(\log(ep)\Ex\left\|\sum_{ij}a_{ij}(g_ig_j-\delta_{ij})\right\|
+\sqrt{p}\sup_{\|x\|_2\leq 1}\Ex\left\|\sum_{i\neq j}a_{ij}x_ig_j\right\|
\\
\label{eq:upperest1}
+\sqrt{p}&\sup_{\|(x_{ij})\|_2\leq 1}\left\|\sum_{ij}a_{ij}x_{ij}\right\|
+p\log(ep)\sup_{\|x\|_2\leq 1,\|y\|_2\leq 1}\left\|\sum_{ij}a_{ij}x_iy_j\right\|
\Bigg)
\end{align}
and
\begin{align}
\notag
\left\|\sum_{ij}a_{ij}(g_ig_j-\delta_{ij})\right\|_p
&\leq
C\Bigg(\Ex\left\|\sum_{ij}a_{ij}(g_ig_j-\delta_{ij})\right\|
+\sqrt{p}\sup_{\|x\|_2\leq 1}\Ex\left\|\sum_{i\neq j}a_{ij}x_ig_j\right\|
\\
\label{eq:upperest2}
+\sqrt{p}&\log(ep)\sup_{\|(x_{ij})\|_2\leq 1}\left\|\sum_{ij}a_{ij}x_{ij}\right\|
+p\sup_{\|x\|_2\leq 1,\|y\|_2\leq 1}\left\|\sum_{ij}a_{ij}x_iy_j\right\|
\Bigg).
\end{align}
\end{thm}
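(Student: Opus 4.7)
\medskip
\noindent\textbf{Proof plan.} The three terms involving $\Ex\|\sum a_{ij}(g_ig_j-\delta_{ij})\|$ and $p\sup_{x,y}\|\sum a_{ij}x_iy_j\|$ on the right-hand side of the Borell--Arcones--Gin\'e estimate \eqref{eq:Borel-Arcones-Gine} already appear with no logarithmic loss in both \eqref{eq:upperest1} and \eqref{eq:upperest2}. The whole work consists in replacing the second term
\[
\Phi \ :=\ \Ex \sup_{\|x\|_2 \le 1} \Big\| \sum_{ij} a_{ij} x_i g_j \Big\|
\]
by the "outside-the-expectation" quantities $B := \sup_{\|x\|_2\le 1}\Ex\|\sum_{i\neq j}a_{ij}x_ig_j\|$, $D := \sup_{\|(x_{ij})\|_2 \le 1}\|\sum a_{ij} x_{ij}\|$ and $E := \sup_{\|x\|_2,\|y\|_2\le 1}\|\sum a_{ij}x_iy_j\|$, possibly paying a $\log(ep)$ factor.

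\medskip

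The first step is dualization. For every realization of $g$, Hahn--Banach gives
\[
\sup_{\|x\|_2 \le 1} \Big\| \sum_{ij} a_{ij} x_i g_j\Big\|
= \sup_{y^*\in B_{F^*}} \|M_{y^*} g\|_2,
\qquad (M_{y^*})_{ij} := \langle y^*, a_{ij}\rangle.
\]
One checks directly from the definitions that $\sup_{y^*\in B_{F^*}}\|M_{y^*}\|_{\mathrm{op}} \le E$ and $\sup_{y^*\in B_{F^*}}\|M_{y^*}\|_{\mathrm{HS}} \le D$; also, for each fixed $y^*$, the map $g\mapsto \|M_{y^*} g\|_2$ is Lipschitz with constant $\|M_{y^*}\|_{\mathrm{op}}$ and mean bounded by $\|M_{y^*}\|_{\mathrm{HS}}$. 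This is exactly the Gaussian chaos of order one with operator coefficients, which is the right setting for Borell--TIS.

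\medskip

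The main analytic step is a combination of decoupling and iterated application of the Borell--TIS inequality. First decouple the off-diagonal part $\sum_{i\neq j}a_{ij}g_ig_j$ into $\widetilde Q := \sum_{ij} a_{ij} g_i g_j'$ with an independent copy $g'$; the diagonal contribution $\sum_i a_{ii}(g_i^2-1)$ is standard. Conditioning on $g$ and applying Borell--TIS in the $g'$ variable gives
\[
\bigl(\Ex_{g'}\|\widetilde Q\|^p\bigr)^{1/p}
\le C\,\Ex_{g'}\Big\|\sum_j b_j(g) g_j'\Big\|
+C\sqrt{p}\,\sigma\bigl(b(g)\bigr),
\]
where $b_j(g) = \sum_i a_{ij}g_i$ and $\sigma(b(g)) = \sup_{y^*\in B_{F^*}}(\sum_j \langle y^*, b_j(g)\rangle^2)^{1/2}$. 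Now take the $L^p(g)$ norm. The first summand is a seminorm in $g$ whose Lipschitz constant on the Euclidean ball is $\le B$ and whose mean is $A$ (up to a decoupling constant); a second application of Borell--TIS, this time in $g$, produces a bound of order $A + \sqrt{p}\,B$. The difficulty is the second summand, because $\sigma(b(g))$ coincides, after reindexing using the symmetry of $A$, with the operator norm $\sup_{y^*}\|M_{y^*}^T g\|_2$, so that $\|\sigma(b(g))\|_{L^p(g)}$ has exactly the same shape as the quantity $\Phi$ that we started with. This circularity is the main obstacle.

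\medskip

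To close the loop, the plan is to control $\|\sigma(b(g))\|_{L^p(g)}$ by Borell--TIS applied to the Gaussian-indexed process $g\mapsto\sup_{y^*}\|M_{y^*}g\|_2$ (whose Lipschitz constant is $\sup_{y^*}\|M_{y^*}\|_{\mathrm{op}}\le E$), so that
\[
\bigl\|\sigma(b(g))\bigr\|_{L^p(g)}\ \le\ C\Big(\Ex\sup_{y^*}\|M_{y^*}g\|_2+\sqrt{p}\,E\Big),
\]
and to estimate $\Ex\sup_{y^*}\|M_{y^*}g\|_2$ by passing to $L^{q}$ for $q=\log(ep)$ and dominating the supremum of $\|M_{y^*}g\|_2$ by its $L^q$-norm, which by the pointwise Borell--TIS bound is at most $C(D+\sqrt{\log(ep)}\,E)\le C\log(ep)(D+E)$. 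Rearranging and tracking the constants leads to \eqref{eq:upperest1}, with the $\log(ep)$ factor appearing on $A$ (from the outer decoupling/TIS step iterated at level $q$) and on $E$. The second estimate \eqref{eq:upperest2} is obtained by the dual way of closing the recursion: instead of absorbing the log into $A$ and $E$, one bounds $\Ex\sup_{y^*}\|M_{y^*}g\|_2$ by a union bound over an entropy-controlled net in the $\|\cdot\|_{\mathrm{HS}}$ radius $D$, which shifts the logarithmic loss onto $D$ while keeping $E$ and $A$ clean. The most delicate part of the argument, and the place where we expect technical difficulty, is arranging the iteration so that the recursive term is strictly smaller than $\Phi$ (with constant $<1$ after the $\log(ep)$ enhancement), so that it can be reabsorbed into the left-hand side.
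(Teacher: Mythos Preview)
Your reduction to bounding $\Phi=\Ex\sup_{y^*\in B_{F^*}}\|M_{y^*}g\|_2$ and the identification of the circularity are both correct; but the proposed way of breaking the loop does not work. In the argument for \eqref{eq:upperest1} you write that one can ``dominate the supremum of $\|M_{y^*}g\|_2$ by its $L^q$-norm'' and then use the pointwise Borell--TIS bound $(\Ex\|M_{y^*}g\|_2^q)^{1/q}\le C(D+\sqrt{q}\,E)$. This inequality is valid for each \emph{fixed} $y^*$, but it yields nothing for $\Ex\sup_{y^*}\|M_{y^*}g\|_2$: the index set $B_{F^*}$ is an arbitrary convex body in a possibly infinite-dimensional space, and swapping the supremum with the expectation requires control of the entropy of $\{M_{y^*}:y^*\in B_{F^*}\}$, which you do not have. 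The same objection applies to your sketch for \eqref{eq:upperest2}: a ``union bound over an entropy-controlled net in the $\|\cdot\|_{\mathrm{HS}}$ radius $D$'' presupposes that the covering numbers of this set in HS-norm are bounded in terms of $D,E$ alone, and in general they are not. In short, Borell--TIS handles the concentration part (the passage from the mean to higher moments), but it cannot by itself estimate the mean of a supremum over an uncontrolled index set; that is precisely why \eqref{eq:Borel-Arcones-Gine} is hard to improve.

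The paper proceeds very differently. After the same reduction to $\Phi$, it treats $\Phi$ as the expected supremum of a Gaussian process on $B_2^n\times T$ with $T=B_{F^*}$, and bounds it by a hand-crafted chaining. The two key inputs are: (i) a volumetric entropy estimate (Corollary~\ref{cor:entrestch}) of the form $\sqrt{\log N(B_2^n\times T,d_A,\delta)}\le C\big(\delta^{-1}(\cdot)+\delta^{-1/2}s_A(T)^{1/2}\big)$, where the $\delta^{-1/2}$ term is too weak for Dudley's integral and forces the chaining to be done by hand; and (ii) Talagrand's Sudakov-type minoration for Gaussian chaoses (Theorem~\ref{thm:minchaos2d}), which is what permits decomposing $T$ at each scale with the increments $\Ex\alpha_A(G_n\otimes(t-t'))$ controlled in terms of $s_A(T)$. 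The two placements of the $\log(ep)$ in \eqref{eq:upperest1} and \eqref{eq:upperest2} correspond to two different ways of handling the scales $1\le l\le l_0\sim\log p$ in this chaining (Proposition~\ref{osz:main}, estimates \eqref{wzor1} and \eqref{wzor2}), not to two ways of closing a Borell--TIS recursion.
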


One of the main reasons behind the appearance of additional logarithmic factors is lack of good Sudakov-type estimates for
Gaussian quadratic forms. Such bounds hold for linear forms and as a result we may show the following
($(g_{ij})_{i,j\leq n}$ below denote as usual i.i.d. ${\mathcal N}(0,1)$ random variables).

\begin{thm}
\label{thm:upper2d2}
Under the assumptions of Theorem \ref{thm:uppper2d1} we have
\begin{align}
\notag
\left\|\sum_{ij}a_{ij}(g_ig_j-\delta_{ij})\right\|_p
\leq
C\Bigg(&\Ex\left\|\sum_{ij}a_{ij}(g_ig_j-\delta_{ij})\right\|+\Ex\left\|\sum_{i\neq j}a_{ij}g_{ij}\right\|
\\
\notag
&+\sqrt{p}\sup_{\|x\|_2\leq 1}\Ex\left\|\sum_{i\neq j}a_{ij}x_ig_j\right\|
+\sqrt{p}\sup_{\|(x_{ij})\|_2\leq 1}\left\|\sum_{ij}a_{ij}x_{ij}\right\|
\\
\label{eq:upperest3}
&+p\sup_{\|x\|_2\leq 1,\|y\|_2\leq 1}\left\|\sum_{ij}a_{ij}x_iy_j\right\|
\Bigg).
\end{align}
\end{thm}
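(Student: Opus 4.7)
The plan is to refine the Borell--Arcones--Gin\'e moment estimate \eqref{eq:Borel-Arcones-Gine}. That inequality already supplies the end terms $\Ex\|\sum a_{ij}(g_ig_j-\delta_{ij})\|$ and $p\sup_{x,y}\|\sum a_{ij}x_iy_j\|$, and reduces the task to bounding the middle contribution $\sqrt p\,\Ex\sup_{\|x\|_2\le 1}\|V_x\|$, where $V_x := \sum_{ij}a_{ij}x_ig_j$. I would decompose
\[
\Ex\sup_{\|x\|_2\le 1}\|V_x\| \le \sup_{\|x\|_2\le 1}\Ex\|V_x\| + \Ex\sup_{\|x\|_2\le 1}\bigl(\|V_x\|-\Ex\|V_x\|\bigr),
\]
the first summand of which directly produces the target term $\sqrt p\sup_x\Ex\|\sum_{i\ne j}a_{ij}x_ig_j\|$ (the diagonal being easily absorbed), so that the whole game is to bound the centered supremum on the right.

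For that centered supremum, view $(V_x)_{x\in B_2^n}$ as an $F$-valued Gaussian process and chain with respect to its canonical (weak-variance) metric
\[
d(x,y) \;=\; \sup_{\phi\in B_{F^*}}\Bigl(\sum_j \phi\bigl(\textstyle\sum_i(x-y)_ia_{ij}\bigr)^2\Bigr)^{1/2} \;=\; \sup_{\|z\|_2\le 1}\Bigl\|\sum_{ij}(x-y)_iz_ja_{ij}\Bigr\|,
\]
for which Borell's inequality yields subgaussian concentration of the $F$-norm increments $\|V_x-V_y\|$. Generic chaining then bounds the centered sup by $C\gamma_2(B_2^n,d)$, and the goal becomes
\[
\gamma_2(B_2^n,d) \;\lesssim\; \sup_{\|(x_{ij})\|_2\le 1}\Bigl\|\sum a_{ij}x_{ij}\Bigr\| \,+\, \frac{1}{\sqrt p}\,\Ex\Bigl\|\sum_{i\ne j}a_{ij}g_{ij}\Bigr\|.
\]
I would prove this by a two-scale split: at coarse scales the contribution is bounded by the diameter of $(B_2^n,d)$, which is exactly $\sup_{\|(x_{ij})\|_2\le 1}\|\sum a_{ij}x_{ij}\|$; at fine scales the contribution is bounded by comparing the metric $d$ on $B_2^n$ to the canonical metric $d_W$ of the \emph{linear} Gaussian sum $W=\sum_{i\ne j}a_{ij}g_{ij}$ on $B_{F^*}$ and invoking Talagrand's majorizing measure theorem $\gamma_2(B_{F^*},d_W)\sim \Ex\|W\|$ -- a bound that is sharp up to universal constants for linear Gaussian sums, unlike the Sudakov-type bounds available for Gaussian chaoses, whose suboptimality is what forces the logarithmic factor in Theorem~\ref{thm:uppper2d1}.

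The main obstacle is this last comparison: the duality bridging $B_2^n$ equipped with $d$ and $B_{F^*}$ equipped with $d_W$ is not immediate, since $d(x,y)$ uses only rank-one tensors $(x_iz_j)_{ij}$ of $\ell_2^{n^2}$, whereas the majorizing measure for $W$ sees the full ball. Closing the gap requires carefully tracking how the outer $\sqrt p$ factor is distributed -- most cleanly by performing the moment computation after decoupling to $\tilde Y=\sum a_{ij}g_ig'_j$, conditioning on $g'$, applying the Gaussian moment inequality $(\Ex_g\|\tilde Y\|^p)^{1/p}\le \Ex_g\|\tilde Y\|+C\sqrt p\,\sigma(g')$ with $\sigma(g')=\sup_x\|\sum a_{ij}x_ig'_j\|$, and then invoking the Gaussian-moment identity $\|W\|_p\sim \Ex\|W\|+\sqrt p\sup_{\|(x_{ij})\|_2\le 1}\|\sum a_{ij}x_{ij}\|$ for the linear sum $W$, so that $\Ex\|W\|$ appears in the final bound with coefficient $1$ rather than $\sqrt p$.
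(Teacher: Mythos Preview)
Your reduction to bounding $\sqrt p\,\Ex\sup_{\|x\|_2\le 1}\|V_x\|$ matches the paper (this is Proposition~\ref{prop:red}), and splitting off $\sup_x\Ex\|V_x\|$ is natural. But the central step --- bounding the centered supremum $\Ex\sup_x(\|V_x\|-\Ex\|V_x\|)$ by $C\gamma_2(B_2^n,d)$ via generic chaining --- does not go through. Generic chaining needs the process $Z_x=\|V_x\|-\Ex\|V_x\|$ to have $d(x,y)$-subgaussian increments. Borell's inequality says only that $\|V_x-V_y\|$ concentrates around its \emph{mean} $\Ex\|V_x-V_y\|$ with parameter $d(x,y)$; it does not say that this mean is of order $d(x,y)$, and in general $\Ex\|V_{x-y}\|$ is much larger (it is a strong-variance norm of $x-y$, comparable to $|x-y|_2\sup_{u}\Ex\|V_u\|$, not to the weak-variance distance $d(x,y)$). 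Equivalently, the map $g\mapsto\|V_x(g)\|-\|V_y(g)\|$ is not $d(x,y)$-Lipschitz. So along any chain you must also control the deterministic shifts $\Ex\|V_{\pi_n(x)}-V_{\pi_{n-1}(x)}\|$, and these do not telescope into $\gamma_2(B_2^n,d)$.

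The second gap is the $\gamma_2$ estimate itself. You correctly flag that the "duality bridge" between $(B_2^n,d)$ and $(B_{F^*},d_W)$ is the main obstacle, but the suggested workaround via decoupling and conditioning on $g'$ goes in a circle: after Gaussian concentration in $g$ one faces $\sqrt p\,\|\sigma(g')\|_{L_p(g')}$ with $\sigma(g')=\sup_x\|\sum a_{ij}x_ig'_j\|$, and another round of Gaussian concentration in $g'$ reproduces precisely $\Ex_{g'}\sup_x\|\sum a_{ij}x_ig'_j\|$, the quantity you started from. The linear-sum identity for $\|W\|_p$ does not help, since $\sigma(g')$ is a supremum over rank-one tensors, not $\|W\|$.

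The paper sidesteps both issues by chaining the \emph{scalar} Gaussian process on the full index set $B_2^n\times B_{F^*}$ with its canonical metric $d_A$. Entropy for $d_A$ is obtained by a volumetric argument (Corollary~\ref{cor:entrestch}), and the chaining in Proposition~\ref{osz:main} propagates not just the $d_A$-diameter but also the auxiliary mean quantities $\beta_{A,T}(x)$ and $\Ex\alpha_A(G_n\otimes t)$ from scale to scale; the latter is shrunk using ordinary Sudakov minoration on $T=B_{F^*}$ for the linear process $\sum a_{ijk}g_{ij}t_k$ (Remark~\ref{sud:decomp}), which is exactly how the term $\Ex\|\sum_{i\ne j}a_{ij}g_{ij}\|$ enters the final bound with coefficient $1$ rather than $\sqrt p$.
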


In particular we know that Conjecture \ref{conj1_2d} holds in Banach spaces, in which Gaussian quadratic forms dominate in mean Gaussian linear forms, i.e. in Banach spaces $(F,\|\ \|)$ for which there exists a constant $\tau<\infty$ such
for any finite symmetric matrix $(a_{ij})_{i,j\leq n}$ with values in $F$ one has
\begin{equation}
\Ex\left\|\sum_{i\neq j}a_{ij}g_{ij}\right\|\leq \tau \Ex\left\|\sum_{ i\neq j}a_{ij}g_ig_j\right\|. \label{war}
\end{equation}

This condition has ben introduced in \cite{vNW} as a one sided version of Pisier's contraction property (see \cite{Pisier1}) in the context of multiple stochastic integration in the vector valued setting. It is easy to check (see Proposition \ref{prop:estLr} below) that it holds for $L_r$-spaces with $\tau=\tau(r)$, but the class of spaces satisfying it is much larger. For instance \eqref{war} holds in all type 2 spaces and for Banach lattices it is equivalent to finite cotype. We refer the reader to \cite[Chapter 7]{Hytonen} for a more detailed presentation.

\begin{rem}
Let us consider a non-centered  Gaussian quadratic form $S=\sum_{i,j}a_{ij}g_ig_j$. By the triangle inequality for $p\ge 1$ one has
$\|S\|_p\le  \|\Ex S\|+\|S- \Ex S\|_p$, whereas Jensen's inequality implies $\|S\|_p \ge \Ex \|S\| \ge \|\Ex S\|$ and as a consequence $\|S - \Ex S\|_p \le 2 \|S\|_p$.
Thus $\|S\|_p\sim \|\Ex S\|+\|S- \Ex S\|_p$ and so Proposition \ref{prop:lower2d} yields
\begin{align*}
\left\|\sum_{ij}a_{ij}g_ig_j\right\|_p
&\geq
\frac{1}{C}\Bigg(\Ex\left\|\sum_{ij}a_{ij}g_ig_j\right\|
+\sqrt{p}\sup_{\|x\|_2\leq 1}\Ex\left\|\sum_{i\neq j}a_{ij}x_ig_j\right\|
\\
&\phantom{aaaaa}+\sqrt{p}\sup_{\|(x_{ij})\|_2\leq 1}\left\|\sum_{ij}a_{ij}x_{ij}\right\|
+p\sup_{\|x\|_2\leq 1,\|y\|_2\leq 1}\left\|\sum_{ij}a_{ij}x_iy_j\right\|
\Bigg).
\end{align*}
and Theorem \ref{thm:upper2d2} implies
\begin{align*}
\left\|\sum_{ij}a_{ij}g_ig_j\right\|_p
&\leq
C\Bigg(\Ex\left\|\sum_{ij}a_{ij}g_ig_j\right\|+\Ex\left\|\sum_{i\neq j}a_{ij}g_{ij}\right\|
+\sqrt{p}\sup_{\|x\|_2\leq 1}\Ex\left\|\sum_{i\neq j}a_{ij}x_ig_j\right\|
\\
&\phantom{aaaaa}+\sqrt{p}\sup_{\|(x_{ij})\|_2\leq 1}\left\|\sum_{ij}a_{ij}x_{ij}\right\|
+p\sup_{\|x\|_2\leq 1,\|y\|_2\leq 1}\left\|\sum_{ij}a_{ij}x_iy_j\right\|
\Bigg).
\end{align*}
\end{rem}

Proposition \ref{prop:lower2d} and Theorem \ref{thm:upper2d2} may be expressed in terms of tails.

\begin{thm}
\label{thm:tails2d}
Let $(a_{ij})_{i,j \leq n}$ be a symmetric matrix with values in a normed space $(F,\| \ \cdot \  \|)$.
Then for any $t>0$,%% C^{-1}\Ex\left\|\sum_{ij}a_{ij}(g_ig_j-\delta_{ij})\right\| $
\begin{align}\label{eq:thm6-lower-bound}
\Pr\left(\left\|\sum_{ij}a_{ij}(g_ig_j-\delta_{ij})\right\|
\geq t+\frac{1}{C}\Ex\left\|\sum_{ij}a_{ij}(g_ig_j-\delta_{ij})\right\|\right)
\geq \frac{1}{C}\exp\left(-C\min\left\{\frac{t^2}{U^2},\frac{t}{V}\right\}\right),
\end{align}
where
\begin{align}
U&=\sup_{\|x\|_2\leq 1}\Ex\left\|\sum_{i \neq j}a_{ij}x_ig_j\right\|
+\sup_{\|(x_{ij})\|_2\leq 1}\left\|\sum_{ij}a_{ij}x_{ij}\right\|, \label{u}
\\
V&=\sup_{\|x\|_2\leq 1,\|y\|_2\leq 1}\left\|\sum_{ij}a_{ij}x_iy_j\right\|. \label{v}
\end{align}
Moreover, for
$t>C(\Ex\|\sum_{ij}a_{ij}(g_ig_j-\delta_{ij})\|+\Ex\|\sum_{i\neq j}a_{ij}g_{ij}\|)$ we have
\begin{align}\label{eq:thm6-upper-bound}
\Pr\left(\left\|\sum_{ij}a_{ij}(g_ig_j-\delta_{ij})\right\|\geq t\right)
\leq 2\exp\left(-\frac{1}{C}\min\left\{\frac{t^2}{U^2},\frac{t}{V}\right\}\right).
\end{align}
\end{thm}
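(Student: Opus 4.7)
The plan is to derive both tail inequalities from the moment estimates already established via classical Markov/Paley--Zygmund arguments, the only nonstandard ingredient being the moment comparison (hypercontractivity) for Banach-space-valued Gaussian chaoses of degree two. Throughout, let $T=\sum_{ij}a_{ij}(g_ig_j-\delta_{ij})$ and $M=\Ex\|T\|+\Ex\|\sum_{i\neq j}a_{ij}g_{ij}\|$, and recall $U,V$ from \eqref{u}--\eqref{v}.

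For \eqref{eq:thm6-upper-bound} I would invoke Theorem~\ref{thm:upper2d2}, which yields $\|T\|_p\le C_0(M+\sqrt{p}\,U+pV)$ for every $p\ge 1$. Given $t>C_1 M$ with $C_1$ sufficiently large, I choose $p=c_1\min\{t^2/U^2,\,t/V\}$ with $c_1$ small enough that $eC_0(M+\sqrt{p}\,U+pV)\le t$ for the chosen $p$. Chebyshev's inequality then gives
\[
\Pr(\|T\|\ge t)\le\left(\frac{\|T\|_p}{t}\right)^p\le e^{-p},
\]
which is the claimed bound. If the prescribed $p$ turns out to be smaller than $1$, the right-hand side of \eqref{eq:thm6-upper-bound} exceeds $1$ and the inequality is trivial.

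For \eqref{eq:thm6-lower-bound} the extra ingredient is the hypercontractive moment comparison for Banach-space-valued Gaussian chaoses of degree~2, namely $\|T\|_{2p}\le K\|T\|_p$ for every $p\ge 1$ with a universal constant $K$; this is classical and follows from Borell-type estimates for vector-valued Gaussian chaoses (see \cite{AG,Bo,LT}). Combined with the Paley--Zygmund inequality applied to $\|T\|^p$, it yields $\Pr(\|T\|\ge\tfrac12\|T\|_p)\ge c_0\,e^{-c_2 p}$ for $p\ge 1$ with universal $c_0,c_2>0$. Proposition~\ref{prop:lower2d} provides $\|T\|_p\ge \frac{1}{C_0}(\Ex\|T\|+\sqrt{p}\,U+pV)$, so
\[
\Pr\Bigl(\|T\|\ge\tfrac{1}{2C_0}\Ex\|T\|+\tfrac{1}{2C_0}(\sqrt{p}\,U+pV)\Bigr)\ge c_0\,e^{-c_2 p}.
\]
Given $t>0$, I would then pick $p=\max\{1,\,c_3\min(t^2/U^2,\,t/V)\}$ so that $\tfrac{1}{2C_0}(\sqrt{p}\,U+pV)\ge t$; the desired bound \eqref{eq:thm6-lower-bound} follows with $C=2C_0$. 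In the regime where this forces $p=1$ (so $\min\{t^2/U^2,t/V\}$ is bounded by an absolute constant), the conclusion reduces to the standard Paley--Zygmund estimate $\Pr(\|T\|\ge\tfrac{1}{2}\Ex\|T\|)\ge c>0$, which again relies only on $\|T\|_2\le K\|T\|_1$.

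The only genuinely nontrivial step is the hypercontractive comparison $\|T\|_{2p}\le K\|T\|_p$, which is what makes the Paley--Zygmund argument convert a moment lower bound into an exponential-type probability lower bound; everything else is routine Chebyshev/Paley--Zygmund optimization over $p$, structurally parallel to how the scalar Hanson--Wright bound \eqref{eq:HW} is derived from the moment formula \eqref{mom}.
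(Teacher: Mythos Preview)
Your proposal is correct and follows essentially the same approach as the paper: both parts are obtained by converting the moment bounds of Proposition~\ref{prop:lower2d} and Theorem~\ref{thm:upper2d2} into tail estimates via Chebyshev (for the upper bound) and Paley--Zygmund combined with hypercontractive moment comparison for degree-two Gaussian chaoses (for the lower bound), with the same choice $p\sim\min\{t^2/U^2,\,t/V\}$. The paper cites the moment comparison in the form $\|S\|_q\le\frac{q-1}{p-1}\|S\|_p$ and restricts to $p\ge 2$, but this is only a cosmetic difference from your formulation.
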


As a corollary to Theorem \ref{thm:upper2d2} we get a Hanson-Wright-type inequality for Banach space valued quadratic forms in general independent subgaussian random variables. Its proof as well as proofs of Theorems \ref{thm:uppper2d1}, \ref{thm:upper2d2} and  \ref{thm:tails2d} is presented in Section \ref{sec:proofs-of-main-results}.

\begin{thm}\label{thm:hw}
Let $X_1,X_2,\ldots,X_n$ be independent mean zero $\alpha$-subgaussian random variables. Then for any symmetric  matrix
$(a_{ij})_{i,j\leq n}$ with values in a normed space $(F,\| \ \cdot \ \|)$ and
$t>C\alpha^2(\Ex\|\sum_{ij}a_{ij}(g_ig_j-\delta_{ij})\|+\Ex\|\sum_{i \neq j}a_{ij}g_{ij}\|)$ we have
\begin{equation}
\label{eq:HWnorm}
\Pr\left(\left\|\sum_{ij}a_{ij}(X_iX_j-\Ex(X_iX_j))\right\|\geq t\right)
\leq 2\exp\left(-\frac{1}{C}\min\left\{\frac{t^2}{\alpha^4 U^2},\frac{t}{\alpha^2 V}\right\}\right),
\end{equation}
where  $U$ and $V$ are as in Theorem \ref{thm:tails2d}.
\end{thm}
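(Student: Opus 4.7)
My plan is to deduce \eqref{eq:HWnorm} from its Gaussian counterparts, namely Theorem \ref{thm:tails2d} and the moment inequality of Theorem \ref{thm:upper2d2}, following the classical route of decoupling, symmetrization, and subgaussian-to-Gaussian comparison. The tail bound will follow once I establish, for every $p\geq 1$, the moment inequality
\[
\Big\|\sum_{ij}a_{ij}(X_iX_j-\Ex(X_iX_j))\Big\|_p\leq C\alpha^2\bigl(M_0+\sqrt{p}\,U+p\,V\bigr),
\]
where $M_0:=\Ex\|\sum_{ij}a_{ij}(g_ig_j-\delta_{ij})\|+\Ex\|\sum_{i\neq j}a_{ij}g_{ij}\|$. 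Indeed, Chebyshev's inequality applied with $p\sim\min\{t^2/(\alpha^2U)^2,\,t/(\alpha^2V)\}$, together with the hypothesis $t\geq C\alpha^2M_0$ (which absorbs the $M_0$ term into $t/2$), then produces \eqref{eq:HWnorm}.

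To prove the moment inequality I split the form as $\sum_{i\neq j}a_{ij}X_iX_j+\sum_ia_{ii}(X_i^2-\Ex X_i^2)$ and treat each piece separately. For the off-diagonal term the de la Pe\~{n}a--Montgomery-Smith decoupling inequality yields
\[
\Big\|\sum_{i\neq j}a_{ij}X_iX_j\Big\|_p\leq C\Big\|\sum_{i,j}a_{ij}X_iX_j'\Big\|_p,
\]
where $X'$ is an independent copy of $X$. Conditioning on $X'$, the inner sum is linear in the independent mean-zero $\alpha$-subgaussian sequence $(X_i)$, so Rademacher symmetrization followed by the standard subgaussian-to-Gaussian contraction principle gives $\|\sum_iX_iv_i\|_p\leq C\alpha\|\sum_ig_iv_i\|_p$ for arbitrary $v_i\in F$; applying this with $v_i=\sum_ja_{ij}X_j'$, then iterating for $X'$ conditionally on $g$, produces
\[
\Big\|\sum_{i,j}a_{ij}X_iX_j'\Big\|_p\leq C\alpha^2\Big\|\sum_{i,j}a_{ij}g_ig_j'\Big\|_p,
\]
with $g'$ an independent copy of $g$. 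The two-sided Gaussian decoupling inequality then rewrites the right-hand side, up to an auxiliary Gaussian diagonal $\sum_ia_{ii}g_ig_i'$, as $C\alpha^2\|\sum_{ij}a_{ij}(g_ig_j-\delta_{ij})\|_p$, which Theorem \ref{thm:upper2d2} bounds by $C\alpha^2(M_0+\sqrt{p}U+pV)$.

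What remains are the diagonal contributions $\sum_ia_{ii}(X_i^2-\Ex X_i^2)$ and the Gaussian residue $\sum_ia_{ii}g_ig_i'$ produced by the decoupling step. Each is a sum of independent mean-zero random vectors in $F$ with $\psi_1$-norms at most $C\alpha^2\|a_{ii}\|\leq C\alpha^2V$ (using $\|a_{ii}\|\leq V$ from \eqref{v} with $x=y=e_i$) and weak variance bounded by $C\alpha^2U$ (via the choice $x_{ij}=x_i\delta_{ij}$ in the second supremum defining $U$ in \eqref{u}). A Banach-space Bernstein-type inequality for sums of independent centered $\psi_1$ random vectors then provides, for each such diagonal sum $S$, a bound $\|S\|_p\leq C\Ex\|S\|+C\alpha^2\sqrt{p}\,U+C\alpha^2p\,V$; the remaining expectation $\Ex\|\sum_ia_{ii}(X_i^2-\Ex X_i^2)\|$ is itself dominated by $C\alpha^2\Ex\|\sum_ia_{ii}(g_i^2-1)\|\leq C\alpha^2\Ex\|\sum_{ij}a_{ij}(g_ig_j-\delta_{ij})\|$, where the last inequality uses that the conditional expectation of the full Gaussian chaos given $\sigma(g_1^2,\ldots,g_n^2)$ equals its diagonal. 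The main obstacle is precisely this diagonal bookkeeping---verifying that each of the subexponential and Gaussian residues fits into $M_0+\sqrt{p}U+pV$ with the correct constants; once that is handled, the rest of the argument reduces to a chain of by-now standard probability-in-Banach-space tools.
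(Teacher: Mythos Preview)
Your overall strategy matches the paper's: split into diagonal and off-diagonal parts, reduce each to its Gaussian counterpart via decoupling, symmetrization and subgaussian-to-Gaussian contraction, recombine, apply Theorem~\ref{thm:upper2d2}, and finish with Chebyshev. Two execution points deserve correction or simplification.

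First, the decoupling step as you wrote it is not quite right. Theorem~\ref{th:decoupling} gives $\|\sum_{i\neq j}a_{ij}X_iX_j\|_p\le C\|\sum_{i\neq j}a_{ij}X_iX_j'\|_p$ with $i\neq j$ on \emph{both} sides; the full-sum version you state is not what de~la~Pe\~na--Montgomery-Smith asserts, and there is no simple Jensen argument extracting the off-diagonal from the full decoupled sum. Keeping $i\neq j$ throughout (as the paper does), the two conditional subgaussian-to-Gaussian comparisons and Gaussian decoupling back give $\|\sum_{i\neq j}a_{ij}X_iX_j\|_p\le C\alpha^2\|\sum_{i\neq j}a_{ij}g_ig_j\|_p$ directly, and your ``Gaussian residue'' $\sum_ia_{ii}g_ig_i'$ never appears. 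Second, for the diagonal the paper avoids the Bernstein detour: after symmetrization it applies the $\psi_1$-comparison \eqref{eq:comparison-psi-r} with $k=2$ to the symmetric variables $\varepsilon_iX_i^2$ (whose $\psi_1$-norm is $\le C\alpha^2$) to get $\|\sum_ia_{ii}(X_i^2-\Ex X_i^2)\|_p\le C\alpha^2\|\sum_ia_{ii}g_{i1}g_{i2}\|_p\le C\alpha^2\|\sum_ia_{ii}(g_i^2-1)\|_p$, the last step being \eqref{ine:2}. Lemma~\ref{lem:lower1} then merges diagonal and off-diagonal into $C\alpha^2\|\sum_{ij}a_{ij}(g_ig_j-\delta_{ij})\|_p$ in one line. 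Your Bernstein route can be made to work, but the Banach-space $\psi_1$ moment bound you need---with the correct $p$-dependence and no $\log n$ factor---is essentially \eqref{eq:comparison-psi-r} together with the estimate from \cite{LaSMlc} used in Lemma~\ref{lem:diag}, so you arrive at the same place with extra bookkeeping.
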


\begin{rem}
It is not hard to check that in the case $F=\er$ we have $U\sim \|(a_{ij})\|_{\mathrm{HS}}$ and
$V=\|(a_{ij})\|_{\mathrm{op}}$. Moreover,
\[
\Ex\left\|\sum_{ij}a_{ij}(g_ig_j-\delta_{ij})\right\|+\Ex\left\|\sum_{i  \neq j}a_{ij}g_{ij}\right\|
\leq (\sqrt{2}+1)\|(a_{ij})\|_{\mathrm{HS}},
\]
so  for $t<C'\alpha^2(\Ex\|\sum_{ij}a_{ij}(g_ig_j-\delta_{ij})\|+\Ex\|\sum_{i \neq j}a_{ij}g_{ij}\|)$ and sufficiently large $C$ the right hand side of \eqref{eq:HWnorm} is at least 1.
Hence \eqref{eq:HWnorm}
holds for any $t>0$ in the real case and is equivalent to the Hanson-Wright bound.
\end{rem}

\begin{rem}
Proposition \ref{prop:diagest} below shows that we may replace in all estimates above the term
$\sup_{\|x\|_2\leq 1}\Ex\|\sum_{i \neq j}a_{ij}x_ig_j\|$ by
$\sup_{\|x\|_2\leq 1}\Ex\|\sum_{ij}a_{ij}x_ig_j\|$.
\end{rem}

\begin{rem}
We are able to derive similar estimates as discussed in this paper for Banach space valued Gaussian chaoses of arbitrary degree. Formulas are however more complicated and the proof is more technical. For these reasons we decided to include details in a separate forthcoming paper \cite{ALM}.
\end{rem}

The organization of the paper is as follows. In the next section we discuss a few corollaries of Theorems \ref{thm:upper2d2} and \ref{thm:hw}. In Section 3 we prove Proposition \ref{prop:lower2d} and show that it is enough
to  bound separately moments of diagonal and off-diagonal parts of chaoses.
In Section 4 we reduce Theorems \ref{thm:uppper2d1} and \ref{thm:upper2d2} to the problem of estimating means of suprema of certain Gaussian processes. In Section 5 we show how to bound expectations of such suprema -- the main new ingredient are entropy bounds presented in Corollary \ref{cor:entrestch} (derived via volumetric-type arguments).
Unfortunately our entropy bounds are too weak to use the Dudley integral bound. Instead, we present a technical chaining argument (of similar type as in \cite{LaAoP}). In the last section we conclude the proofs of main Theorems.

\section{Consequences and extensions}

\subsection{$L_r$-spaces}

We will now restrict our attention to $L_r$ spaces for $1 \le r<\infty$. It turns out that in this case one may express all parameters of Theorem \ref{thm:upper2d2} without any expectations as is shown in the proposition below. As a corollary we will obtain a version of the Hanson-Wright inequality in terms of only deterministic quantities, in particular without the additional term $\Ex\|\sum_{i \neq j}a_{ij}g_{ij}\|$, which shows that Conjecture \ref{conj1_2d} is satisfied in these spaces (with constants depending on $r$). We note that the following proposition implies also that $L_r$ spaces satisfy \eqref{war} with $\tau=Cr$. In fact, using the general theory presented in \cite{Hytonen}, one can improve this to $\tau=C\sqrt{r}$, however we are not going to use directly any of these bounds on $\tau$.

\begin{prop}
\label{prop:estLr}
For any symmetric matrix $(a_{ij})_{i,j\leq n}$ with values in $L_r=L_r(X,\mu)$, $1\leq r<\infty$
and $x_1,\ldots,x_n\in \er$ we have
\begin{align}
&\frac{1}{C}\left\|\sqrt{\sum_{ij}a_{ij}^2}\right\|_{L_r}
\leq \Ex\left\|\sum_{ij}a_{ij}g_{ij}\right\|_{L_r}
\leq C\sqrt{r}\left\|\sqrt{\sum_{ij}a_{ij}^2}\right\|_{L_r}, \label{loc1}
\\
&\frac{1}{C}\left\|\sqrt{\sum_{j}\left(\sum_i a_{ij}x_i\right)^2}\right\|_{L_r}
\leq \Ex\left\|\sum_{ij}a_{ij}x_{i}g_j\right\|_{L_r}
\leq C\sqrt{r}\left\|\sqrt{\sum_{j}\left(\sum_i a_{ij}x_i\right)^2}\right\|_{L_r}, \label{loc2}
\\
&\frac{1}{C\sqrt{r}}\left\|\sqrt{\sum_{ij}a_{ij}^2}\right\|_{L_r}
\leq \Ex\left\|\sum_{ij}a_{ij}(g_{i}g_j-\delta_{ij})\right\|_{L_r}
\leq Cr\left\|\sqrt{\sum_{ij}a_{ij}^2}\right\|_{L_r}. \label{loc3}
\end{align}
\end{prop}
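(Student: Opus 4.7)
The plan is to reduce each of the three estimates to real-valued moment computations via Fubini, and then to pass between expectations of $L_r$-norms and $L_r$-norms of expectations using Jensen's inequality (for the upper bounds) and Minkowski's integral inequality (for the lower bounds). The scalar inputs I would invoke are the absolute moments $(\Ex|g|^r)^{1/r}\leq C\sqrt{r}$ of a standard Gaussian for \eqref{loc1} and \eqref{loc2}, and the two-sided moment formula \eqref{mom} together with Gaussian hypercontractivity for \eqref{loc3}.

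For \eqref{loc1} observe that for each fixed $x\in X$ the sum $\sum_{ij}a_{ij}(x)g_{ij}$ is a centered Gaussian of variance $\sum_{ij}a_{ij}(x)^2$. Fubini gives
\[
\Ex\Big\|\sum_{ij}a_{ij}g_{ij}\Big\|_{L_r}^r = \int_X \Ex\Big|\sum_{ij}a_{ij}(x)g_{ij}\Big|^r d\mu(x) = c_r^r \Big\|\sqrt{\sum_{ij}a_{ij}^2}\Big\|_{L_r}^r,
\]
with $c_r=(\Ex|g|^r)^{1/r}\leq C\sqrt{r}$, so Jensen's inequality $\Ex\|\cdot\|_{L_r}\leq(\Ex\|\cdot\|_{L_r}^r)^{1/r}$ produces the upper bound. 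The matching lower bound follows from Minkowski's integral inequality (for $r\geq 1$), which gives
\[
\Ex\Big\|\sum_{ij}a_{ij}g_{ij}\Big\|_{L_r} \geq \Big\|\Ex\Big|\sum_{ij}a_{ij}g_{ij}\Big|\,\Big\|_{L_r} = c_1\Big\|\sqrt{\sum_{ij}a_{ij}^2}\Big\|_{L_r}.
\]
Estimate \eqref{loc2} is an immediate instance of the same argument applied to the single-index sum $\sum_j b_j g_j$ with $b_j=\sum_i a_{ij}x_i$.

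For \eqref{loc3} I would run the same scheme with $Y(x):=\sum_{ij}a_{ij}(x)(g_ig_j-\delta_{ij})$. The upper bound follows from \eqref{mom}: since $\|A(x)\|_{\mathrm{op}}\leq \|A(x)\|_{\mathrm{HS}}=\sqrt{\sum_{ij}a_{ij}(x)^2}$, one has $(\Ex|Y(x)|^r)^{1/r}\leq Cr\sqrt{\sum_{ij}a_{ij}(x)^2}$ for $r\geq 1$, so Fubini together with Jensen yields $\Ex\|Y\|_{L_r}\leq Cr\|\sqrt{\sum_{ij}a_{ij}^2}\|_{L_r}$. For the lower bound I would invoke Gaussian hypercontractivity (or, equivalently, Paley--Zygmund combined with $\|Y(x)\|_4\leq C\|Y(x)\|_2$ for a second order Gaussian chaos) to obtain $\Ex|Y(x)|\geq c\|Y(x)\|_2 = c\sqrt{2\sum_{ij}a_{ij}(x)^2}$, and then apply Minkowski's integral inequality to conclude $\Ex\|Y\|_{L_r}\geq c\|\sqrt{\sum_{ij}a_{ij}^2}\|_{L_r}$; this is in fact stronger than the stated factor $1/(C\sqrt{r})$.

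None of the steps looks technically deep; the only genuine analytic input beyond Fubini is the use of Minkowski's integral inequality to move the expectation inside the $L_r$-norm, and the invocation of the scalar chaos moment bound \eqref{mom}. What has to be tracked carefully are the constants: the $\sqrt{r}$ on the upper side of \eqref{loc1}/\eqref{loc2} comes from $c_r^{1/r}$, while the $r$ on the upper side of \eqref{loc3} comes from the leading $p\|A\|_{\mathrm{op}}$ term in \eqref{mom} at $p=r$.
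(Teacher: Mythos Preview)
Your argument is correct. The upper bounds are obtained exactly as in the paper, via Jensen's inequality $\Ex\|\cdot\|_{L_r}\leq(\Ex\|\cdot\|_{L_r}^r)^{1/r}$, Fubini, and the scalar moment formulas $(\Ex|g|^r)^{1/r}\sim\sqrt{r}$ and \eqref{mom}.

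For the lower bounds your route differs from the paper's. The paper compares the first and $r$-th moments of the $L_r$-norm itself: for \eqref{loc1}, \eqref{loc2} it invokes Gaussian concentration to get $(\Ex\|\sum a_ig_i\|_{L_r}^r)^{1/r}\leq C\sqrt{r}\,\Ex\|\sum a_ig_i\|_{L_r}$, and for \eqref{loc3} it uses the vector-valued Borel--Arcones--Gin\'e bound \eqref{eq:Borel-Arcones-Gine} at $p=r$ to get the analogous factor $Cr$. You instead push the expectation inside via Minkowski's integral inequality, $\Ex\|Y\|_{L_r}\geq\|\Ex|Y|\|_{L_r}$, and then use only the pointwise scalar facts $\Ex|g|=c_1$ and $\Ex|Y(x)|\geq c\|Y(x)\|_2$ (the latter by hypercontractivity for a degree-two chaos). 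This is more elementary---no vector-valued concentration or \eqref{eq:Borel-Arcones-Gine} is needed---and in \eqref{loc3} it actually yields an $r$-independent lower constant, improving on the paper's $1/(C\sqrt{r})$. The paper's approach, on the other hand, packages the argument as a single two-sided moment comparison $\Ex\|\cdot\|\sim_r(\Ex\|\cdot\|^r)^{1/r}$, which is conceptually cleaner once one is willing to quote the relevant concentration results.
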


\begin{proof}
For any $a_i$'s in $L_r$ the Gaussian concentration yields
\begin{align*}
\Ex \left\| \sum_i a_i g_i \right\|_{L_r} \leq \left(\Ex \left\| \sum_i a_i g_i \right\|^r_{L_r} \right)^{1/r} \leq C\sqrt{r}\Ex \left\| \sum_i a_i g_i \right\|_{L_r}. %\label{eq1}
\end{align*}
Since
\begin{align*}
\left(\Ex \left\| \sum_i a_i g_i \right\|^r_{L_r} \right)^{1/r}&=\left(\int_X \Ex \left| \sum_i a_i (x) g_i \right|^r d \mu(x) \right)^{1/r}\\
&=\left(\int_X \Ex|g_1|^r  \Big(\sum_i a^2_i(x)\Big)^{r/2} d \mu(x) \right)^{1/r}
\sim \sqrt{r} \left\| \sqrt{\sum_i a^2_i}\right\|_{L_r},
\end{align*}
estimates \eqref{loc1},\eqref{loc2} follow easily.
The proof of \eqref{loc3} is analogous.  It is enough to observe that \eqref{eq:Borel-Arcones-Gine} yields that%from \cite[Theorem 3.2.10]{delaPenaGine}
\begin{align*}
\Ex\left\|\sum_{ij}a_{ij}(g_{i}g_j-\delta_{ij})\right\|_{L_r}
\leq \left( \Ex\left\|\sum_{ij}a_{ij}(g_{i}g_j-\delta_{ij})\right\|_{L_r}^r\right)^{1/r}
\leq C r \Ex\left\|\sum_{ij}a_{ij}(g_{i}g_j-\delta_{ij})\right\|_{L_r}
\end{align*}
and \eqref{mom} implies for any $x \in X$,
$$\frac{\sqrt{r}}{C}  \sqrt{\sum_{ij} a^2_{ij}(x) }
\leq  \left( \Ex \left|\sum_{ij}a_{ij}(x)(g_{i}g_j-\delta_{ij}) \right| ^{r} \right)^{1/r}
\leq  C r   \sqrt{\sum_{ij} a^2_{ij}(x)}.$$
\end{proof}

The above proposition, together with Proposition \ref{prop:lower2d} and Theorem \ref{thm:upper2d2} immediately yield the following corollary (in particular they imply that Conjecture \ref{conj1_2d} holds in $L_r$ spaces with $r$-dependent constants). Below $\sim^r$ denotes comparison up to constants depending only on $r$.

\begin{cor}
For any symmetric matrix $(a_{ij})_{ij}$ with values in $L_r$ and  $p\geq 1$ we have
\begin{align*}
\left\|\sum_{ij}a_{ij}(g_ig_j-\delta_{ij})\right\|_p
\sim^r&
\left\|\sqrt{\sum_{ij}a_{ij}^2}\right\|_{L_r}
+\sqrt{p} \sup_{\|x\|_2\leq 1}\left\|\sqrt{\sum_{j}\left(\sum_{ i\neq j} a_{ij}x_i\right)^2}\right\|_{L_r}
\\
&+\sqrt{p}\sup_{\|(x_{ij})\|_2\leq 1}\left\|\sum_{ij}a_{ij}x_{ij}\right\|_{L_r}
+p\sup_{\|x\|_2\leq 1,\|y\|_2\leq 1}\left\|\sum_{ij}a_{ij}x_iy_j\right\|_{ L_r}.
\end{align*}
The implicit constants in the estimates for moments can be taken to be equal to $Cr$ in the upper bound and
$r^{-1/2}/C$ in the lower bound.
\end{cor}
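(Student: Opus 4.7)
The plan is to derive both directions of the corollary by combining the already stated bounds Proposition~\ref{prop:lower2d} (lower) and Theorem~\ref{thm:upper2d2} (upper) with the $L_r$-specific translations between expected norms and deterministic quantities provided by Proposition~\ref{prop:estLr}. The $L_r$-structure enters only through this last step; the two general theorems do all the probabilistic work.

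For the lower bound I would apply Proposition~\ref{prop:lower2d}. Two of its four terms are already of the deterministic form appearing in the statement, so only the two expected norms require processing. The first, $\Ex\|\sum_{ij}a_{ij}(g_ig_j-\delta_{ij})\|_{L_r}$, is controlled from below by $\frac{1}{C\sqrt{r}}\|\sqrt{\sum_{ij}a_{ij}^2}\|_{L_r}$ via the lower half of \eqref{loc3}. The second, $\sup_{\|x\|_2\le 1}\Ex\|\sum_{i\neq j}a_{ij}x_ig_j\|_{L_r}$, is handled by applying the lower half of \eqref{loc2} to the matrix obtained from $(a_{ij})$ by zeroing out its diagonal. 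Collecting constants yields the factor $r^{-1/2}/C$ claimed in the statement.

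For the upper bound I would apply Theorem~\ref{thm:upper2d2} and translate each of the two expected norms using the upper halves of \eqref{loc3} and \eqref{loc2} (again, for the $\sup_x$-term, applied to the zero-diagonal matrix). The genuinely new feature compared to the lower bound is the extra linear-form term $\Ex\|\sum_{i\neq j}a_{ij}g_{ij}\|_{L_r}$ that appears in Theorem~\ref{thm:upper2d2}; by the upper half of \eqref{loc1} applied to the zero-diagonal matrix it is bounded by $C\sqrt{r}\|\sqrt{\sum_{ij}a_{ij}^2}\|_{L_r}$ and is therefore absorbed into the first term on the right-hand side of the corollary. This absorption is exactly the $L_r$-instance of the domination condition \eqref{war}, which is what makes Conjecture~\ref{conj1_2d} hold in these spaces. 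The largest constant comes from the upper half of \eqref{loc3}, producing the advertised factor $Cr$.

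The main (and rather minor) obstacle is pure bookkeeping: some estimates in Proposition~\ref{prop:estLr} are written with full sums $\sum_{ij}$, while Proposition~\ref{prop:lower2d}, Theorem~\ref{thm:upper2d2} and the corollary involve $\sum_{i\neq j}$. This is resolved uniformly by applying Proposition~\ref{prop:estLr} to the matrix whose diagonal has been set to zero, together with the trivial monotonicity $\|\sqrt{\sum_{i\neq j}a_{ij}^2}\|_{L_r}\le \|\sqrt{\sum_{ij}a_{ij}^2}\|_{L_r}$ whenever an upper bound needs to be matched to the right-hand side of the corollary.
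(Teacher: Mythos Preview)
Your proposal is correct and follows exactly the route the paper indicates: the corollary is stated as an immediate consequence of Proposition~\ref{prop:lower2d}, Theorem~\ref{thm:upper2d2} and Proposition~\ref{prop:estLr}, and you have spelled out precisely how the three combine, including the absorption of the extra linear-form term via \eqref{loc1} and the tracking of the $r$-dependent constants.
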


Theorem \ref{thm:hw} and Proposition \ref{prop:estLr} imply the following Hanson-Wright-type estimate in $L_r$-spaces.

\begin{cor}
Let $X_1,X_2,\ldots,X_n$ be independent mean zero $\alpha$-subgaussian random variables. Then for any symmetric finite matrix
$(a_{ij})_{i,j\leq n}$ with values in $L_r=L_r(X,\mu)$, $1\leq r<\infty$ and
$t>C\alpha^2r\|\sqrt{\sum_{ij}a_{ij}^2}\|_{L_r}$ we have
\begin{equation}
\label{eq:HWnorm1}
\Pr\left(\left\|\sum_{ij}a_{ij}(X_iX_j-\Ex(X_iX_j))\right\|_{L_r}\geq t\right)
\leq 2\exp\left(-\frac{1}{C}\min\left\{\frac{t^2}{\alpha^4 r U^2},\frac{t}{\alpha^2 V }\right\}\right),
\end{equation}
where
\begin{align*}
U&=\sup_{\|x\|_2\leq 1}\left\|\sqrt{\sum_{j}\left(\sum_{ i\neq j }a_{ij}x_i\right)^2}\right\|_{L_r}
+\sup_{\|(x_{ij})\|_2\leq 1}\left\|\sum_{ij}a_{ij}x_{ij}\right\|_{L_r},
\\
V&=\sup_{\|x\|_2\leq 1,\|y\|_2\leq 1}\left\|\sum_{ij}a_{ij}x_iy_j\right\|_{L_r}.
\end{align*}
\end{cor}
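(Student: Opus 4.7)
The plan is to deduce the corollary as a direct application of Theorem \ref{thm:hw} in the normed space $F=L_r(X,\mu)$, after translating each Gaussian expectation that appears in its hypothesis and in the parameter $U$ into a deterministic $L_r$-norm with the help of Proposition \ref{prop:estLr}.

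First I would invoke Theorem \ref{thm:hw} with $F=L_r$; provided
\[
t>C\alpha^2\left(\Ex\left\|\sum_{ij}a_{ij}(g_ig_j-\delta_{ij})\right\|_{L_r}+\Ex\left\|\sum_{i\neq j}a_{ij}g_{ij}\right\|_{L_r}\right),
\]
this yields
\[
\Pr\left(\left\|\sum_{ij}a_{ij}(X_iX_j-\Ex(X_iX_j))\right\|_{L_r}\geq t\right)\leq 2\exp\left(-\frac{1}{C}\min\left\{\frac{t^2}{\alpha^4 U_0^2},\frac{t}{\alpha^2 V}\right\}\right),
\]
where $V$ already agrees with the $V$ in the corollary and
\[
U_0=\sup_{\|x\|_2\leq 1}\Ex\left\|\sum_{i\neq j}a_{ij}x_ig_j\right\|_{L_r}+\sup_{\|(x_{ij})\|_2\leq 1}\left\|\sum_{ij}a_{ij}x_{ij}\right\|_{L_r}.
\]
The second summand of $U_0$ already matches the second summand of $U$ in the corollary, so only the first summand requires processing.

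Next I would apply estimate \eqref{loc2} of Proposition \ref{prop:estLr} to the vector $(\sum_{i}a_{ij}x_i)_{j}$, uniformly over $\|x\|_2\leq 1$, to obtain
\[
\sup_{\|x\|_2\leq 1}\Ex\left\|\sum_{i\neq j}a_{ij}x_ig_j\right\|_{L_r}\leq C\sqrt{r}\sup_{\|x\|_2\leq 1}\left\|\sqrt{\sum_j\Big(\sum_{i\neq j}a_{ij}x_i\Big)^2}\right\|_{L_r}.
\]
Hence $U_0\leq C\sqrt{r}\,U$, so $U_0^2\leq C^2 r U^2$; the square of the extra $\sqrt{r}$ is precisely what installs the factor $r$ in the denominator of the subgaussian term of the exponent in the corollary's conclusion.

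Finally, I would reconcile the two threshold conditions. Estimates \eqref{loc1} and \eqref{loc3} of Proposition \ref{prop:estLr} give
\[
\Ex\left\|\sum_{i\neq j}a_{ij}g_{ij}\right\|_{L_r}\leq C\sqrt{r}\left\|\sqrt{\sum_{ij}a_{ij}^2}\right\|_{L_r},\qquad \Ex\left\|\sum_{ij}a_{ij}(g_ig_j-\delta_{ij})\right\|_{L_r}\leq Cr\left\|\sqrt{\sum_{ij}a_{ij}^2}\right\|_{L_r},
\]
so their sum is at most $Cr\|\sqrt{\sum_{ij}a_{ij}^2}\|_{L_r}$. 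Thus the hypothesis $t>C\alpha^2 r\|\sqrt{\sum_{ij}a_{ij}^2}\|_{L_r}$ of the corollary, with a suitably enlarged universal constant, implies the hypothesis of Theorem \ref{thm:hw}, and combining the three observations yields the claimed tail bound. The argument is essentially a bookkeeping substitution, with no genuine obstacle; the only point deserving care is tracking how the factor $\sqrt{r}$ from the Gaussian moment comparison in $L_r$ gets squared when it enters the $t^2/U^2$ term, producing the linear dependence on $r$ in the subgaussian regime.
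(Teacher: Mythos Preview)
Your proposal is correct and follows exactly the route indicated by the paper, which simply states that the corollary follows from Theorem~\ref{thm:hw} and Proposition~\ref{prop:estLr}; you have merely filled in the bookkeeping. The only cosmetic slip is that when invoking \eqref{loc2} you should apply it to the coefficients $b_j=\sum_{i:\,i\neq j}a_{ij}x_i$ rather than $\sum_i a_{ij}x_i$, but the bound you actually write down already has the correct $i\neq j$ restriction.
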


\subsection{Spaces of type 2}\
Recall that  a normed space $F$ is of type 2 with constant $\lambda$ if for every positive integer $n$ and $v_1,\ldots,v_n \in F$,
\begin{equation}\label{eq:type-2-condition}
  \Ex\left\|\sum_{i=1}^n v_i \varepsilon_i\right\| \le \lambda\sqrt{\sum_{i=1}^n \|v_i\|^2},
\end{equation}
where $\varepsilon_1,\varepsilon_2,\ldots$ is a sequence of independent Rademacher variables.
The importance of this class of spaces stems from the fact that if $F$ is of type 2 then one can easily bound from above expectations of linear combinations or more generally multilinear forms in independent centered random variables with $F$-valued coefficients. Let us now introduce the basic (by now classical) tools allowing to prove such estimates and show how they can be combined with Theorem \ref{thm:hw} to yield a tail inequality for quadratic forms with the right hand side expressed in terms of deterministic quantities (i.e., not involving expectations of Banach space valued random variables).

The first tool we are going to use is the following symmetrization inequality (see, e.g., \cite[Lemma 6.3]{LT} for a more general formulation).

\begin{lem}\label{le:symmetrization}
Let $Y_1,\ldots, Y_n$ be independent random variables with values in a Banach space $F$. Let moreover $\varepsilon_1,\ldots,\varepsilon_n$ be i.i.d. Rademacher variables independent of $(Y_i)_{i=1}^n$. Then for any $p\ge 1$,
\begin{displaymath}
  \Big\|\sum_{i=1}^n (Y_i-\Ex Y_i)\Big\|_p \le 2\Big\|\sum_{i=1}^n \varepsilon_i Y_i\Big\|_p.
\end{displaymath}
\end{lem}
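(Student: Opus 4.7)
The plan is a classical symmetrization argument by introducing an independent copy. First, I would adjoin to the probability space an independent copy $(Y_i')_{i=1}^n$ of $(Y_i)_{i=1}^n$, chosen independent also of the Rademacher sequence $(\varepsilon_i)_{i=1}^n$. Since $\Ex Y_i' = \Ex Y_i$, one may rewrite $Y_i - \Ex Y_i = \Ex'[Y_i - Y_i']$, where $\Ex'$ denotes conditional expectation with respect to the $\sigma$-algebra generated by $(Y_i)$. Applying Jensen's inequality to the convex map $v \mapsto \|v\|^p$ then yields
\[
\Ex \Big\|\sum_{i=1}^n (Y_i - \Ex Y_i)\Big\|^p \;\le\; \Ex \Big\|\sum_{i=1}^n (Y_i - Y_i')\Big\|^p.
\]

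Next, I would exploit the symmetry of the differences $Y_i - Y_i'$. Because the family $(Y_i - Y_i')_{i=1}^n$ is independent and each component has a symmetric distribution, its joint law coincides with that of $(\varepsilon_i (Y_i - Y_i'))_{i=1}^n$. Combined with the triangle inequality in $L_p(F)$ and the fact that $(Y_i)$ and $(Y_i')$ are equidistributed, this gives
\[
\Big\|\sum_{i=1}^n (Y_i - Y_i')\Big\|_p \;=\; \Big\|\sum_{i=1}^n \varepsilon_i (Y_i - Y_i')\Big\|_p \;\le\; \Big\|\sum_{i=1}^n \varepsilon_i Y_i\Big\|_p + \Big\|\sum_{i=1}^n \varepsilon_i Y_i'\Big\|_p \;=\; 2\Big\|\sum_{i=1}^n \varepsilon_i Y_i\Big\|_p.
\]
Taking $p$-th roots and combining with the first display completes the proof.

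There is no real obstacle here; the argument is a standard probability-in-Banach-spaces trick that carries over from the scalar case verbatim, provided only that $\|Y_i\|^p$ is integrable (which is the implicit hypothesis). The only point requiring a small amount of care is the conditional Jensen step, which is why one must work on a product space containing both $(Y_i)$ and the independent copy $(Y_i')$ from the outset. I would not expect any reader familiar with \cite{LT} to need more than a pointer to this outline.
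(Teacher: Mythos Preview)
Your argument is correct and is exactly the standard symmetrization proof; the paper itself does not give a proof but simply cites \cite[Lemma~6.3]{LT}, where this same argument appears.
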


The other fact we need is a decoupling inequality for quadratic forms. For symmetric random variables it has been proved in \cite{K}, the general case can be immediately obtained from a decoupling result for $U$-statistics, proved in \cite{dlPMS}.

\begin{thm}
\label{th:decoupling}
Let $X_1,\ldots,X_n$ be a sequence of independent random variables and let $X_1',\ldots,$ $X_n'$ be its independent
copy. Then for any symmetric matrix $(a_{ij})_{i,j\le n}$ with coefficients in a normed space $(F,\|\cdot\|)$
and any $p\ge 1$,
\begin{equation}
\label{eq:decoupling}
\frac{1}{C}\Big\|\sum_{i\neq j} a_{ij}X_i X_j'\Big\|_p \le \Big\|\sum_{i\neq j} a_{ij}X_i X_j\Big\|_p \le C\Big\|\sum_{i\neq j} a_{ij}X_i X_j'\Big\|_p.
\end{equation}
\end{thm}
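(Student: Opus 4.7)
The plan is to reduce both inequalities in \eqref{eq:decoupling} to a single invocation of the $U$-statistics decoupling theorem of de la Pe\~na and Montgomery-Smith \cite{dlPMS}. In its Banach-space-valued form, that theorem states: for independent random variables $X_1,\ldots,X_n$ with independent copies $X_1',\ldots,X_n'$, for any family of $F$-valued kernels $h_{ij}$ satisfying the symmetry $h_{ij}(x,y)=h_{ji}(y,x)$, and for every convex non-decreasing $\Phi\colon\er_+\to\er_+$, one has
\begin{displaymath}
\frac{1}{C}\Ex\Phi\Bigl(\Bigl\|\sum_{i\neq j}h_{ij}(X_i,X_j')\Bigr\|\Bigr)
\le \Ex\Phi\Bigl(\Bigl\|\sum_{i\neq j}h_{ij}(X_i,X_j)\Bigr\|\Bigr)
\le C\Ex\Phi\Bigl(\Bigl\|\sum_{i\neq j}h_{ij}(X_i,X_j')\Bigr\|\Bigr),
\end{displaymath}
with a universal constant $C$.

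I would then specialize this to the rank-one kernel $h_{ij}(x,y):=a_{ij}xy$. The symmetry hypothesis on $(h_{ij})$ is exactly the symmetry of the matrix $(a_{ij})$, since $h_{ij}(x,y)=a_{ij}xy=a_{ji}yx=h_{ji}(y,x)$. Taking $\Phi(t):=t^p$ and extracting $p$-th roots then yields \eqref{eq:decoupling} immediately; this is the only non-trivial input needed, and it is classical.

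Should one wish to use only the easier (one-sided) direction of the dPMS inequality, the matching lower bound also admits a short direct randomization proof. Let $\xi_1,\ldots,\xi_n$ be i.i.d.\ Bernoulli$(1/2)$ random variables independent of $(X_i,X_i')_{i=1}^n$ and set $\hat X_i:=\xi_iX_i+(1-\xi_i)X_i'$, so that $(\hat X_i)\stackrel{d}{=}(X_i)$. Taking the conditional expectation of $\sum_{i\neq j}a_{ij}\hat X_i\hat X_j$ over the $\xi_i$'s (using the independence of $\xi_i$ and $\xi_j$ for $i\neq j$) produces $\tfrac14\sum_{i\neq j}a_{ij}(X_iX_j+X_i'X_j'+X_iX_j'+X_i'X_j)$; the symmetry of $(a_{ij})$ identifies the last two of these sums, and Jensen's inequality followed by the triangle inequality then gives the bound with an explicit constant. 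In either approach, no substantive obstacle arises; the main step is simply to recognise that $a_{ij}xy$ defines a symmetric bivariate kernel to which the decoupling machinery applies.
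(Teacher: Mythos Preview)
Your proposal is correct and is exactly what the paper does: Theorem~\ref{th:decoupling} is not proved in the paper but is stated with the remark that it ``can be immediately obtained from a decoupling result for $U$-statistics, proved in \cite{dlPMS}'', and you simply spell out that reduction (with the bonus of the direct randomization argument for the easier direction). One cosmetic point: the standard formulation of the de la Pe\~na--Montgomery-Smith inequality places the constant \emph{inside} $\Phi$, i.e.\ $\Ex\Phi(\|\cdot\|)\le \Ex\Phi(C\|\cdot\|)$ rather than $\le C\,\Ex\Phi(\|\cdot\|)$, but since you only use $\Phi(t)=t^p$ this is immaterial after taking $p$-th roots.
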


The advantage of working with bilinear forms in two independent sequences of random variables instead of quadratic forms in a single sequence is that one can view the former conditionally as a sum of independent random variables, which allows to use a variety of classical tools.

Let us now return to type 2 spaces and show how the above lemmas can be combined with condition \eqref{eq:type-2-condition} in order to bound the expectations appearing in Theorem \ref{thm:hw}.

First, by Lemma \ref{le:symmetrization} and \eqref{eq:type-2-condition} one easily obtains that if $F$ is of type 2 with constant $\lambda$ then for any independent random variables $X_i$,
\begin{displaymath}
\Ex \left \| \sum_i a_i (X_i^2 - \Ex X_i^2)\right\| \le 2\Ex \left\|\sum_i a_i \varepsilon_i X_i^2\right\|
\le 2\lambda \Ex \sqrt{\sum_i \|a_i\|^2 X_i^4}\le  2\lambda \sqrt{\sum_i \|a_i\|^2 \Ex X_i^4},
\end{displaymath}
where $\varepsilon_1,\ldots,\varepsilon_n$ are independent Rademacher variables, independent of $X_1,\ldots,X_n$.

Moreover, if $\Ex X_i=0$, then using the notation from Theorem \ref{th:decoupling} and denoting now by $\varepsilon_1,\varepsilon_1',\ldots,\varepsilon_n,\varepsilon_n'$ independent Rademacher variables, independent of $X_1,X_1',\ldots,X_n,X_n'$, we obtain
\begin{align*}
\left(\Ex\left \|\sum_{i\neq j} a_{ij}X_i X_j\right\|\right)^2
&\le \Ex \left\| \sum_{i\neq j} a_{ij}X_i X_j\right\|^2
\le C\Ex \left\| \sum_{i\neq j} a_{ij}X_i X_j'\right\|^2
\\
&\le
16 C\Ex \left\| \sum_{i\neq j} a_{ij}\varepsilon_i \varepsilon_j' X_i X_j'\right\|^2
\leq 32 C\lambda^2\sum_{i}\Ex(X_i^2)\Ex\left\|\sum_{j\colon j\neq i}a_{ij}\varepsilon_j'X_j'\right\|^2
\\
&\leq 64C\lambda^4\sum_{i\neq j}\|a_{ij}\|^2\Ex X_i^2 \Ex X_j^2.
\end{align*}
The second inequality above is an application of Theorem \ref{th:decoupling}, the third one follows from an iterated conditional application of Lemma \ref{le:symmetrization} and the last two by a conditional application of \eqref{eq:type-2-condition} together with the Khintchine-Kahane inequality $\|\sum_{i}v_i\ve_i\|_2\leq \sqrt{2}\|\sum_{i}v_i\ve_i\|_1$ (\cite{Ka}, see \cite{LaO} for the optimal constant $\sqrt{2}$).

 \medskip

Combining the above estimates with Theorem \ref{thm:hw} yields immediately the following

\begin{cor}
Let $X_1,X_2,\ldots,X_n$ be independent mean zero $\alpha$-subgaussian random variables and let $F$ be a normed space of type two constant $\lambda$. Then for any symmetric finite matrix
$(a_{ij})_{i,j\leq n}$ with values in $F$ and
$t>C\lambda^2 \alpha^2 \sqrt{\sum_{ij}\|a_{ij}\|^2}$ we have
\begin{equation}
%\label{eq:HWnorm}
\Pr\left(\left\|\sum_{ij}a_{ij}(X_iX_j-\Ex(X_iX_j))\right\|\geq t\right)
\leq 2\exp\left(-\frac{1}{C}\min\left\{\frac{t^2}{\alpha^4U^2},\frac{t}{\alpha^2V}\right\}\right),
\end{equation}
where
\begin{align*}
U&=\lambda \sup_{\|x\|_2\leq 1}\sqrt{\sum_{j}\Big\|\sum_{i\neq j} a_{ij}x_i\Big\|^2}
+\sup_{\|(x_{ij})\|_2\leq 1}\left\|\sum_{ij}a_{ij}x_{ij}\right\|,
\\
V&=\sup_{\|x\|_2\leq 1,\|y\|_2\leq 1}\left\|\sum_{ij}a_{ij}x_iy_j\right\|.
\end{align*}
\end{cor}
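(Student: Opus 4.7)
The plan is to derive the corollary as a direct consequence of Theorem \ref{thm:hw} by replacing the three expectations that occur there, namely $\Ex\|\sum_{ij}a_{ij}(g_ig_j-\delta_{ij})\|$, $\Ex\|\sum_{i\neq j}a_{ij}g_{ij}\|$, and $\sup_{\|x\|_2\leq 1}\Ex\|\sum_{i\neq j}a_{ij}x_ig_j\|$, by deterministic upper bounds obtained from the type 2 condition.

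First I would record the Gaussian version of type 2: if $F$ is type 2 with constant $\lambda$ and $v_1,\ldots,v_n\in F$, then $\Ex\|\sum_i v_i g_i\|\le C\lambda\sqrt{\sum_i\|v_i\|^2}$. This follows by conditioning on the signs of the $g_i$'s, applying \eqref{eq:type-2-condition} conditionally to the Rademacher variables $\varepsilon_i=\mathrm{sgn}(g_i)$ with coefficients $v_i|g_i|$, and then using Jensen's inequality together with $\Ex g_i^2=1$. Applying this pointwise in $x$ with $v_j=\sum_{i\neq j}a_{ij}x_i$ gives
\[
\sup_{\|x\|_2\le 1}\Ex\left\|\sum_{i\neq j}a_{ij}x_ig_j\right\| \le C\lambda\sup_{\|x\|_2\le 1}\sqrt{\sum_j\Big\|\sum_{i\neq j}a_{ij}x_i\Big\|^2},
\]
which is exactly the first term in the desired $U$.

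Next I would bound $\Ex\|\sum_{i\neq j}a_{ij}g_{ij}\|$. Since $(g_{ij})_{i\neq j}$ is just an $\mathcal{N}(0,1)$ family indexed by ordered pairs, the Gaussian type 2 inequality gives directly
\[
\Ex\left\|\sum_{i\neq j}a_{ij}g_{ij}\right\|\le C\lambda\sqrt{\sum_{i\neq j}\|a_{ij}\|^2}\le C\lambda\sqrt{\sum_{ij}\|a_{ij}\|^2}.
\]
For the chaos expectation I would split $\sum_{ij}a_{ij}(g_ig_j-\delta_{ij})=\sum_i a_{ii}(g_i^2-1)+\sum_{i\neq j}a_{ij}g_ig_j$. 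The off-diagonal part is handled by exactly the argument displayed in the paper above the corollary (decoupling + double symmetrization + \eqref{eq:type-2-condition} + Khintchine–Kahane) applied to $X_i=g_i$, giving a bound by $C\lambda^2\sqrt{\sum_{i\neq j}\|a_{ij}\|^2}$. The diagonal part is a sum of independent centered variables $a_{ii}(g_i^2-1)$, so a single symmetrization via Lemma \ref{le:symmetrization} followed by \eqref{eq:type-2-condition} and Jensen's inequality yields $\le C\lambda\sqrt{\sum_i\|a_{ii}\|^2\Ex(g_i^2-1)^2}\le C\lambda\sqrt{\sum_i\|a_{ii}\|^2}$. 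Together,
\[
\Ex\left\|\sum_{ij}a_{ij}(g_ig_j-\delta_{ij})\right\|+\Ex\left\|\sum_{i\neq j}a_{ij}g_{ij}\right\|\le C\lambda^2\sqrt{\sum_{ij}\|a_{ij}\|^2}
\]
(absorbing the factor of $\lambda$ into $\lambda^2$ since $\lambda\ge 1$).

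Finally I would plug these estimates into Theorem \ref{thm:hw}. The assumed range $t>C\lambda^2\alpha^2\sqrt{\sum_{ij}\|a_{ij}\|^2}$ implies (for a sufficiently large absolute $C$) the range condition of Theorem \ref{thm:hw}, and the bound on $\sup_{\|x\|_2\le 1}\Ex\|\sum_{i\neq j}a_{ij}x_ig_j\|$ shows that the $U$ of Theorem \ref{thm:tails2d} is dominated by the $U$ of the corollary, while $V$ is unchanged. Substituting gives the claimed tail inequality. There is no real obstacle here; the only care needed is the bookkeeping of the diagonal versus off-diagonal split and the verification that the Gaussian type 2 estimate follows from the Rademacher one stated in \eqref{eq:type-2-condition}.
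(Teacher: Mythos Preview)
Your proposal is correct and follows essentially the same approach as the paper: the paper also derives this corollary directly from Theorem \ref{thm:hw} by using the type~2 property to bound the three Gaussian expectations, via exactly the symmetrization/decoupling/type~2/Khintchine--Kahane argument displayed just above the statement (specialized to $X_i=g_i$). The only cosmetic differences are that you make the passage from Rademacher to Gaussian type~2 explicit and that you bound the diagonal part using $\Ex(g_i^2-1)^2$ rather than $\Ex g_i^4$; both yield the same conclusion up to absolute constants.
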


\begin{rem}
As already mentioned in the introduction, type 2 spaces satisfy \eqref{war} with $\tau\leq \lambda$
(cf. \cite[Theorem 7.1.20]{Hytonen}) and thus also Conjecture \ref{conj1_2d} with a constant depending on $\lambda$ .
\end{rem}

\begin{rem}
We note that from Theorem \ref{thm:hw} one can also derive similar inequalities for suprema of quadratic forms over VC-type classes of functions appearing e.g., in the analysis of randomized $U$-processes (cf. e.g., \cite[Chapter 5.4]{delaPenaGine}).
\end{rem}

\subsection{Random vectors with dependencies}\

Let us now consider a random vector $X= (X_1,\ldots,X_n)$, possibly with dependent coordinates, and assume that there exists an $\alpha$-Lipschitz map $\psi\colon \er^n \to \er^n$ such that  $X$ has the same distribution as $\psi(G)$, where $G$ is a standard Gaussian vector in $\er^n$. An important class of vectors $X$ with this property is provided by the celebrated Caffarelli contraction principle \cite{Caffarelli_contraction}, which asserts that the map $\psi$ exists if $X$ has density of the form $e^{-V}$ with $\nabla^2 V \ge \alpha^{-2} {\rm Id}$.
As observed by Ledoux and Oleszkiewicz \cite[Corollary 1]{LO}, by combining the well known comparison result due to Pisier \cite{Pisier} with a stochastic domination-type argument, one gets that for any smooth function $f\colon \er^n \to F$, and any $p\ge 1$,
\begin{align}\label{eq:Pisier}
  \|f(X) - \Ex f(X)\|_p \le \frac{\pi\alpha}{2}\|\langle \nabla f(X), G\rangle\|_p,
\end{align}
where here and subsequently $G_n$ is a standard Gaussian vector in $\er^n$  independent of $X$ and for $a \in F^n$, $b \in \er^n$ we denote $\langle a,b\rangle = \sum_{i=1}^n a_i b_i$.
This inequality together with Theorem \ref{thm:upper2d2} allow us to implement a simple argument from \cite{AW} and obtain inequalities for quadratic forms and more general $F$-valued functions of the random vector $X$. Below, we will denote the second partial derivatives of $f$ by $\partial_{ij} f$.  For the sake of brevity, we will focus on moment estimates, clearly tail bounds follow from them by an application of the Chebyshev inequality.

\begin{cor}\label{cor:lip-image}
Let $X$ be an $\alpha$-Lipschitz image of a standard Gaussian vector in $\er^n$ and let $f\colon \er^n \to F$ be a function with bounded derivatives of order two. Assume moreover that $\Ex \nabla f(X) = 0$. Then for any $p \ge 2$,
\begin{align}
  \| f(X) - \Ex f(X)\|_p \le&
  C\alpha^2\sup_{z\in \er^n} \Bigg(\Ex\left\|\sum_{ij}\partial_{ij}f(z)(g_ig_j-\delta_{ij})\right\|
+\Ex\left\|\sum_{ i\neq j}\partial_{ij}f(z)g_{ij}\right\| \nonumber \\
  &+\sqrt{p}\sup_{\|x\|_2\leq 1}\Ex\left\|\sum_{i\neq j}\partial_{ij}f(z)x_ig_j\right\|\nonumber
\\
&+\sqrt{p}\sup_{\|(x_{ij})\|_2\leq 1}\left\|\sum_{ij}\partial_{ij}f(z)x_{ij}\right\|
+p\sup_{\|x\|_2\leq 1,\|y\|_2\leq 1}\left\|\sum_{ij}\partial_{ij}f(z)x_iy_j\right\|
\Bigg).\label{eq:bounded-Hessian}
\end{align}
In particular if $X$ is of mean zero, then
\begin{align}
\label{eq:lip-HW}
\left\|\sum_{ij}a_{ij}(X_iX_j-\Ex(X_iX_j))\right\|_p
\le&
C\alpha^2\Bigg(\Ex\left\|\sum_{ij}a_{ij}(g_ig_j-\delta_{ij})\right\|+\Ex\left\|\sum_{i\neq j}a_{ij}g_{ij}\right\|
\nonumber
\\
&+\sqrt{p}\sup_{\|x\|_2\leq 1}\Ex\left\|\sum_{i\neq j}a_{ij}x_ig_j\right\|
\nonumber+\sqrt{p}\sup_{\|(x_{ij})\|_2\leq 1}\left\|\sum_{ij}a_{ij}x_{ij}\right\|
\\
&+p\sup_{\|x\|_2\leq 1,\|y\|_2\leq 1}\left\|\sum_{ij}a_{ij}x_iy_j\right\|
\Bigg)
\end{align}
and the inequality \eqref{eq:HWnorm} is satisfied.

\end{cor}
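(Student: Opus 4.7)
The plan is to reduce Corollary \ref{cor:lip-image} to a Gaussian chaos bound for the Hessian of $f$ at $X$ by iterating the Pisier--Ledoux--Oleszkiewicz gradient comparison \eqref{eq:Pisier}, and then to invoke Theorem \ref{thm:upper2d2} conditionally on $X$.

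First I would apply \eqref{eq:Pisier} to $f$ itself, giving
\[
\|f(X) - \Ex f(X)\|_p \le \tfrac{\pi\alpha}{2}\|\langle \nabla f(X), G\rangle\|_p,
\]
where $G$ is a standard Gaussian vector on $\er^n$ independent of $X$. The hypothesis $\Ex \nabla f(X) = 0$ guarantees that, for every fixed $G$, the $F$-valued random variable $\langle \nabla f(X), G\rangle = \sum_i \partial_i f(X) G_i$ has vanishing expectation in $X$, so \eqref{eq:Pisier} can be applied a second time, conditionally on $G$ and with a fresh independent standard Gaussian $G'$; Fubini then yields
\[
\|f(X) - \Ex f(X)\|_p \le C\alpha^2 \Big\|\sum_{ij} \partial_{ij} f(X) G_i G'_j\Big\|_p.
\]

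Conditioning on $X$, the Hessian $(\partial_{ij}f(X))$ is a symmetric $F$-valued matrix by Schwarz's theorem, and I would next replace the decoupled chaos by the coupled one: Theorem \ref{th:decoupling} handles the off-diagonal part, while the diagonal part is controlled by a standard symmetrization and contraction-principle argument exploiting that the centered subexponential multipliers $G_iG'_i$ and $g_i^2 - 1$ have comparable $L^p$-moments. This yields, pointwise in $X$,
\[
\Big\|\sum_{ij}\partial_{ij}f(X)G_iG'_j\Big\|_{L^p_{G,G'}} \le C\Big\|\sum_{ij}\partial_{ij}f(X)(g_ig_j - \delta_{ij})\Big\|_{L^p_g}.
\]
Applying Theorem \ref{thm:upper2d2} to the (conditionally fixed) symmetric matrix $(\partial_{ij}f(X))$ gives a pointwise bound by the right-hand side of \eqref{eq:upperest3} with $a_{ij} = \partial_{ij}f(X)$; each such term is dominated by its deterministic supremum over $z \in \er^n$, so taking $L^p_X$ norms and using the triangle inequality yields \eqref{eq:bounded-Hessian}.

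The quadratic-form case \eqref{eq:lip-HW} follows by specializing to $f(x) = \sum_{ij}a_{ij}x_ix_j$: here $\partial_{ij}f(z) = 2a_{ij}$ is independent of $z$, so the supremum over $z$ is trivial, and the hypothesis $\Ex\nabla f(X) = 0$ reduces to the assumed $\Ex X = 0$. The tail inequality \eqref{eq:HWnorm} is then derived from \eqref{eq:lip-HW} by Chebyshev's inequality and optimization in $p \ge 2$, in exact parallel with the passage from Theorem \ref{thm:upper2d2} to \eqref{eq:thm6-upper-bound}. The main obstacle I anticipate is the decoupled-to-coupled passage at the Banach-space level: while the off-diagonal contribution is immediate from Theorem \ref{th:decoupling}, the diagonal contribution calls for a careful comparison of two $F$-valued sums of independent centered random variables with different (though similarly subexponential) scalar multipliers, and one must track constants with some care to reproduce exactly the right-hand side of Theorem \ref{thm:upper2d2}.
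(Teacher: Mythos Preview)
Your proposal is correct and follows essentially the same route as the paper: iterate \eqref{eq:Pisier} twice to reach the decoupled bilinear form $\sum_{ij}\partial_{ij}f(X)G_iG_j'$, pass to the coupled centered form $\sum_{ij}\partial_{ij}f(X)(g_ig_j-\delta_{ij})$, and then apply Theorem \ref{thm:upper2d2} conditionally on $X$ before taking the supremum over $z$. The only difference is that the paper dispatches the decoupled-to-coupled step in one line by citing \cite[Theorem 2.2]{AG}, whereas you propose to reassemble it from Theorem \ref{th:decoupling} plus a diagonal comparison---the latter is exactly \eqref{ine:2} in the paper, so what you flag as the ``main obstacle'' is in fact already available.
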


\begin{proof}[Proof of Corollary \ref{cor:lip-image}]
Let $G_n = (g_1,\ldots,g_n)$, $G_n'=(g_1',\ldots,g_n')$ be independent standard Gaussian vectors in $\er^n$, independent of $X$.
  By an iterated application of \eqref{eq:Pisier} (the second time conditionally
 on $G_n$) we have
\begin{align*}
\Ex \|f(X) - \Ex f(X)\|^p
&\le C^p\alpha^p\Ex\|\langle \nabla f(X),G_n\rangle\|^p
\le C^{2p}\alpha^{2p} \Ex \left\|\sum_{ij} \partial_{ij} f(X) g_ig_j'\right\|^p
\\
& \le \tilde{C}^{2p}\alpha^{2p} \Ex \left\|\sum_{ij} \partial_{ij} f(X) (g_ig_j-\delta_{ij})\right\|^p ,
\end{align*}
where the last inequality follows by \cite[Theorem 2.2]{AG}.
To finish the proof of \eqref{eq:bounded-Hessian} it is now enough to apply Theorem \ref{thm:upper2d2} conditionally on $X$ and replace the expectation in $X$ by the supremum over $z \in \er^n$.

The inequality \eqref{eq:lip-HW} follows by a direct application of \eqref{eq:bounded-Hessian}.
\end{proof}

\section{Lower bounds}

In this part we show Proposition \ref{prop:lower2d} and the lower bound in Theorem \ref{thm:tails2d}.
We start with a simple lemma.

\begin{lem}
\label{lem:lower1}
Let $W= \|\sum_{i\neq j}a_{ij}g_ig_j\|_p+\|\sum_{i}a_{ii}(g_i^2-1)\|_p$. Then for any $p\geq 1$,
$$\frac{1}{3}W \leq \left\|\sum_{ij}a_{ij}(g_ig_j-\delta_{ij})\right\|_p \leq W.$$

%\begin{align*}
%\frac{1}{3}\left(\left\|\sum_{i\neq j}a_{ij}g_ig_j\right\|_p+\left\|\sum_{i}a_{ii}(g_i^2-1)\right\|_p\right)
%\leq \left\|\sum_{ij}a_{ij}(g_ig_j-\delta_{ij})\right\|_p
%\leq \left\|\sum_{i\neq j}a_{ij}g_ig_j\right\|_p+\left\|\sum_{i}a_{ii}(g_i^2-1)\right\|_p.
%\end{align*}

\end{lem}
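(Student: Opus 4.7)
The plan is to split $\sum_{ij}a_{ij}(g_ig_j-\delta_{ij})=X+Y$ with
$$X=\sum_{i\neq j}a_{ij}g_ig_j,\qquad Y=\sum_i a_{ii}(g_i^2-1),$$
and bound each piece separately against $\|X+Y\|_p$. The upper inequality $\|X+Y\|_p\le W$ is immediate from the triangle inequality applied to this decomposition, so all the work is in the lower bound $W\le 3\|X+Y\|_p$.

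The key trick is the sign symmetrization based on the distributional identity $(g_i)_{i\le n}\stackrel{d}{=}(\varepsilon_i g_i)_{i\le n}$, where $(\varepsilon_i)$ is an independent Rademacher sequence. Under this substitution $Y$ is invariant (since it depends only on $g_i^2$), while $X$ transforms into $\widetilde X:=\sum_{i\neq j}a_{ij}\varepsilon_i\varepsilon_j g_ig_j$, which conditionally on $g=(g_1,\dots,g_n)$ has mean zero in $\varepsilon$ (because $\Ex \varepsilon_i\varepsilon_j=0$ for $i\neq j$). Thus
$$\|X+Y\|_p^p=\Ex_g\Ex_\varepsilon\bigl\|\widetilde X+Y(g)\bigr\|^p,\qquad \|X\|_p^p=\Ex_g\Ex_\varepsilon\|\widetilde X\|^p,\qquad \|Y\|_p^p=\Ex_g\|Y(g)\|^p.$$

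For the $Y$ bound, since $\widetilde X$ has conditional mean zero in $\varepsilon$ and $\|\cdot\|^p$ is convex, Jensen's inequality applied conditionally on $g$ gives $\Ex_\varepsilon\|\widetilde X+Y(g)\|^p\ge \|Y(g)\|^p$, and integrating in $g$ yields $\|Y\|_p\le \|X+Y\|_p$. For the $X$ bound, I combine the triangle inequality $\|\widetilde X\|\le \|\widetilde X+Y(g)\|+\|Y(g)\|$ (in the conditional $L_p(\varepsilon)$ norm) with the same Jensen bound $\|Y(g)\|\le (\Ex_\varepsilon\|\widetilde X+Y(g)\|^p)^{1/p}$, obtaining $(\Ex_\varepsilon\|\widetilde X\|^p)^{1/p}\le 2(\Ex_\varepsilon\|\widetilde X+Y(g)\|^p)^{1/p}$; raising to the $p$-th power and integrating in $g$ gives $\|X\|_p\le 2\|X+Y\|_p$. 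Adding the two estimates yields $W=\|X\|_p+\|Y\|_p\le 3\|X+Y\|_p$.

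No step is a serious obstacle; the one mildly delicate point is carrying the conditional Jensen inequality for the Banach-valued variable $\widetilde X$ correctly — this only requires convexity of $v\mapsto\|v\|^p$ and the fact that in the conditional expectation in $\varepsilon$ (with $g$ fixed) the sum $\widetilde X$ is genuinely centered, which holds as soon as the double sum is restricted to $i\neq j$.
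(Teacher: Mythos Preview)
Your proof is correct and follows essentially the same route as the paper: the upper bound by the triangle inequality, and for the lower bound the sign symmetrization $(g_i)\stackrel{d}{=}(\varepsilon_i g_i)$ combined with Jensen's inequality in $\varepsilon$ to extract the diagonal part, then the triangle inequality to recover the off-diagonal part. The only cosmetic difference is that for the bound $\|X\|_p\le 2\|X+Y\|_p$ the paper works directly in the full $L_p$-norm (writing $\|X\|_p\le\|X+Y\|_p+\|Y\|_p\le 2\|X+Y\|_p$), whereas you run the same triangle-plus-Jensen argument conditionally in $\varepsilon$ and then integrate in $g$; both are fine.
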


\begin{proof}
Let $(\ve_i)_{i}$ be a sequence of i.i.d. Rademacher random variables independent of $(g_i)_i$. We have by
symmetry of $g_i$ and Jensen's inequality,
\begin{align*}
\left\|\sum_{ij}a_{ij}(g_ig_j-\delta_{ij})\right\|_p&=
\left\|\sum_{ij}a_{ij}(\ve_i\ve_jg_ig_j-\delta_{ij})\right\|_p
\geq \left\|\Ex_\ve\sum_{ij}a_{ij}(\ve_i\ve_jg_ig_j-\delta_{ij})\right\|_p\\
&=\left\|\sum_{i}a_{ii}(g_i^2-1)\right\|_p.
\end{align*}
To conclude we use the triangle inequality in $L_p$ and get
\[
\left\|\sum_{i\neq j}a_{ij}g_ig_j\right\|_p\leq \left\|\sum_{ij}a_{ij}(g_ig_j-\delta_{ij})\right\|_p+
\left\|\sum_{i}a_{ii}(g_i^2-1)\right\|_p\leq 2\left\|\sum_{ij}a_{ij}(g_ig_j-\delta_{ij})\right\|_p.
\]

Adding the inequalities above yields the first estimate of the lemma. The second one follows trivially from the triangle inequality.
\end{proof}

\begin{proof}[Proof of Proposition \ref{prop:lower2d}]
Obviously
\[
\left\|\sum_{ij}a_{ij}(g_ig_j-\delta_{ij})\right\|_p\geq \Ex\left\|\sum_{ij}a_{ij}(g_ig_j-\delta_{ij})\right\|.
\]
Moreover, denoting by $\|\cdot\|_*$ the norm in the dual of $F$, we have
\begin{align*}
\left\|\sum_{ij}a_{ij}(g_ig_j-\delta_{ij})\right\|_p
&\geq \sup_{\|\varphi\|_*\leq 1}\left\|\sum_{ij}\varphi(a_{ij})(g_ig_j-\delta_{ij})\right\|_p
\\
&\geq \frac{1}{C}\left(\sqrt{p}\sup_{\|\varphi\|_*\leq 1}\|(\varphi(a_{ij}))_{ij}\|_{\mathrm{HS}}
+p\sup_{\|\varphi\|_*\leq 1}\|(\varphi(a_{ij}))_{ij}\|_{\mathrm{op}}\right)
\\
&=\frac{1}{C}\left(\sqrt{p}\sup_{\|(x_{ij})\|_2\leq 1}\left\|\sum_{ij}a_{ij}x_{ij}\right\|
+p\sup_{\|x\|_2\leq 1,\|y\|_2\leq 1}\left\|\sum_{ij}a_{ij}x_iy_j\right\|\right),
\end{align*}
where in the second inequality we used \eqref{mom}.

Lemma \ref{lem:lower1} and the decoupling inequality \eqref{eq:decoupling} yield
\begin{equation}\label{eq:decoupling-lower-bound}
\left\|\sum_{ij}a_{ij}(g_ig_j-\delta_{ij})\right\|_p
\geq\frac{1}{3}\left\|\sum_{i\neq j}a_{ij}g_ig_j\right\|_p
\geq \frac{1}{C}\left\|\sum_{i\neq j}a_{ij}g_ig_j'\right\|_p,
\end{equation}
where $(g_i')_{i}$ denotes an independent copy of $(g_i)_i$.

For any finite sequence $(b_i)_i$ in $(F,\|\ \cdot \ \|)$ we have
\begin{align}
\left\|\sum_{i}b_ig_i\right\|_p\geq \sup_{\|\varphi\|_*\leq 1}\left\|\sum_{i}\varphi(b_i)g_i\right\|_p
=\sup_{\|\varphi\|_*\leq 1}\|(\varphi(b_i))_i\|_2 \cdot \|g_1\|_p\geq
\frac{\sqrt{p}}{C}\sup_{\|x\|_2\leq 1}\left\|\sum_i x_ib_i\right\|. \label{ine:1}
\end{align}
Thus, by \eqref{eq:decoupling-lower-bound} and the Fubini Theorem, we get
\[
\left\|\sum_{ij}a_{ij}(g_ig_j-\delta_{ij})\right\|_p\geq
\frac{ \sqrt{p}}{C}\sup_{\|x\|_2\leq 1}\left\|\sum_{i\neq j}a_{ij}x_ig_j' \right\|_p
\geq \frac{\sqrt{p}}{C}\sup_{\|x\|_2\leq 1}\Ex\left\|\sum_{i\neq j}a_{ij}x_ig_j\right\|.
\]

\end{proof}

\section{Reduction to a bound on the supremum of a Gaussian process}

In this section we will reduce the upper estimates of Theorems \ref{thm:uppper2d1} and \ref{thm:upper2d2} to an estimate on expected value of a supremum of a certain Gaussian process. The arguments in this part of the article are well-known, we present them for the sake of completeness. In particular we will demonstrate the upper bounds given in \eqref{eq:Borel-Arcones-Gine}.

The first lemma shows that we may easily bound the diagonal terms.

\begin{lem}
\label{lem:diag}
For $p\geq 1$ we have
\begin{align*}
\left\|\sum_{i}a_{ii}(g_i^2-1)\right\|_p
&\leq C\left(\Ex\left\|\sum_{ij}a_{ij}(g_ig_j-\delta_{ij})\right\|
+\sqrt{p}\sup_{\|(x_{ij})\|_2\leq 1}\left\|\sum_{ij}a_{ij}x_{ij}\right\|\right.
\\
&\left.\phantom{aaaaaa}+p\sup_{\|x\|_2\leq 1,\|y\|_2\leq 1}\left\|\sum_{ij}a_{ij}x_iy_j\right\|\right).
\end{align*}
\end{lem}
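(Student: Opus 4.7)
My plan is to view $Y = \sum_i \xi_i$ with $\xi_i := a_{ii}(g_i^2-1)$ as a sum of independent mean-zero $F$-valued sub-exponential random variables, and apply a Bernstein-type moment inequality for such sums. Each $\xi_i$ has $\|\xi_i\|_{\psi_1}\le C\|a_{ii}\|\le CV$ (using that $\|g^2-1\|_{\psi_1}$ is a universal constant, together with $\max_i\|a_{ii}\|\le V$, which follows by taking $x=y=e_i$ in the definition of $V$). The weak variance of the sum is
\[
\sigma^2\;:=\;\sup_{\|\varphi\|_*\le 1}\sum_i\Ex\varphi(\xi_i)^2\;=\;2\sup_{\|\varphi\|_*\le 1}\sum_i\varphi(a_{ii})^2\;\le\;2U_2^2,
\]
the last bound coming from $\sup_{\|\varphi\|_*\le 1}\sqrt{\sum_i\varphi(a_{ii})^2}=\sup_{\|z\|_2\le 1}\|\sum_i z_i a_{ii}\|\le U_2$ (by restricting in the definition of $U_2$ to matrices $(x_{ij})$ supported on the diagonal, i.e.\ $x_{ij}=z_i\delta_{ij}$).

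Applying the Bernstein--Banach moment inequality
\[
\Big\|\sum_i\xi_i\Big\|_p\;\le\;C\Big(\Ex\Big\|\sum_i\xi_i\Big\| + \sqrt p\,\sigma + p\max_i\|\xi_i\|_{\psi_1}\Big)
\]
then yields $\|Y\|_p\le C(\Ex\|Y\|+\sqrt p\,U_2+pV)$. To replace $\Ex\|Y\|$ by $\Ex\|X\|$, I would use a standard symmetrization: let $\ve_1,\ldots,\ve_n$ be independent Rademachers independent of $g$; since $\Ex_\ve[\ve_i\ve_j]=\delta_{ij}$, we have $Y = \Ex_\ve\!\big[\sum_{i,j}a_{ij}(\ve_ig_i\ve_jg_j-\delta_{ij})\big]$, and Jensen's inequality combined with the joint distributional identity $(\ve_ig_i)_i\stackrel{d}{=}(g_i)_i$ gives $\Ex\|Y\|\le\Ex\|X\|$.

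The main obstacle is justifying the Bernstein--Banach moment inequality with the sharp sub-exponential constant $\max_i\|\xi_i\|_{\psi_1}$ (rather than $\|\max_i\|\xi_i\|\|_p$, which the Hoffmann--J\o rgensen inequality would produce and which would introduce a spurious factor $\log n$, or than $\Ex\sup_{\|x\|_2\le 1}\|\sum_i a_{ii}x_ig_i\|$, which a direct application of \eqref{eq:Borel-Arcones-Gine} to the diagonal matrix $(a_{ii}\delta_{ij})$ would produce and which equals the $L_1$-norm of $\sup_\varphi\sqrt{\sum_i\varphi(a_{ii})^2g_i^2}$ and can exceed $U_2$ by a factor of $\sqrt{\log n}$). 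The cleanest route I would take exploits the specific scalar-times-vector structure $\xi_i=a_{ii}\eta_i$ with $\eta_i = g_i^2-1$ by reducing to a decoupled Gaussian chaos via the identity $u_i^2-v_i^2=2g_ig_i'$, where $u_i:=(g_i+g_i')/\sqrt 2$ and $v_i:=(g_i-g_i')/\sqrt 2$ are independent standard Gaussians. Setting $A:=\sum_ia_{ii}(u_i^2-1)$ and $B:=\sum_ia_{ii}(v_i^2-1)$, so that $A,B$ are i.i.d.\ copies of $Y$ with $A-B=2\tilde Y$ for $\tilde Y:=\sum_ia_{ii}g_ig_i'$, Jensen's inequality conditional on $A$ yields $\|Y\|_p=\|A\|_p=\|\Ex_B(A-B)\|_p\le\|A-B\|_p=2\|\tilde Y\|_p$. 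Conditionally on $g$, $\tilde Y = \sum_i(a_{ii}g_i)g_i'$ is a Gaussian sum in $F$, so Borell--TIS gives $(\Ex_{g'}\|\tilde Y\|^p)^{1/p}\le C(\Ex_{g'}\|\tilde Y\|+\sqrt p\,\sigma(g))$ with $\sigma(g)=\sup_{\|\varphi\|_*\le 1}\sqrt{\sum_i\varphi(a_{ii})^2g_i^2}$, and the $L_p(g)$-moments of $\Ex_{g'}\|\tilde Y\|$ and $\sigma(g)$ -- both being $V$-Lipschitz functions of $g$ with respect to the Euclidean metric -- are then controlled by Gaussian concentration, and integration in $g$ yields the desired bound with the sharp $\max_i\|\xi_i\|_{\psi_1}$ constant.
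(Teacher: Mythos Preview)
Your reduction $\|Y\|_p\le 2\|\tilde Y\|_p$ with $\tilde Y=\sum_i a_{ii}g_ig_i'$ is correct and is exactly the paper's first step (its display \eqref{ine:2}). The paper then simply quotes the result of \cite{LaSMlc} giving
\[
\Big\|\sum_i a_{ii}X_i\Big\|_p\sim \Ex\Big\|\sum_i a_{ii}X_i\Big\|+\sqrt p\sup_{\|x\|_2\le 1}\Big\|\sum_i a_{ii}x_i\Big\|+p\max_i\|a_{ii}\|
\]
for i.i.d.\ symmetric exponentials $X_i$, and finishes by Lemma~\ref{lem:lower1}. Your Step~6 is an attempt to re-prove this estimate via iterated Gaussian concentration, and this is where the argument breaks.

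First, the claim that $f(g)=\Ex_{g'}\|\tilde Y\|$ is $V$-Lipschitz is false: take $F=\ell_1^n$, $a_{ii}=e_i$; then $f(g)=\sqrt{2/\pi}\sum_i|g_i|$ has Euclidean Lipschitz constant $\sqrt{2/\pi}\,\sqrt n$, while $V=1$. (One can show $f$ is $U_2$-Lipschitz, via $\|\sum_i a_{ii}x_ig_i'\|\le U_2\big(\sum_i x_i^2(g_i')^2\big)^{1/2}$, and that is what you actually need.)

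Second, and more seriously, Gaussian concentration for $\sigma$ gives $\|\sigma\|_p\le \Ex\sigma+C\sqrt p\,V$, so the contribution $\sqrt p\,\|\sigma\|_p$ contains the term $\sqrt p\,\Ex\sigma$. But
\[
\Ex\sigma=\Ex\sup_{\|x\|_2\le 1}\Big\|\sum_i a_{ii}x_ig_i\Big\|
\]
is \emph{exactly} the quantity you identified at the outset as the problematic middle term of \eqref{eq:Borel-Arcones-Gine} for the diagonal matrix; it is not controlled by $U_2$ (for $F=\ell_\infty^n$, $a_{ii}=e_i$, one has $\Ex\sigma\sim\sqrt{\log n}$ and $U_2=1$). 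Thus your iterated concentration recovers the Borell--Arcones--Gin\'e bound for $\tilde Y$ rather than improving it, and an extra ingredient (for instance the inequality $\Ex\sigma\le C\sqrt{V\,\Ex\|\tilde Y\|}$, or simply the cited theorem from \cite{LaSMlc}) is still needed to absorb $\sqrt p\,\Ex\sigma$ into $\Ex\|\tilde Y\|+pV$.
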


\begin{proof}
Let $X_i$ be a sequence of i.i.d. standard symmetric exponential r.v's. A simple argument (cf. proof of Lemma 9.5 in \cite{AL}) shows that
\begin{align}
\label{ine:2}
\left\|\sum_{i}a_{ii}(g_i^2-1)\right\|_p\sim \left\|\sum_{i}a_{ii}g_i g'_i\right\|_p \sim \left\|\sum_{i}a_{ii}X_i\right\|_p.
\end{align}
 The latter quantity was bounded in \cite[Theorem 1]{LaSMlc}, thus
\begin{align*}
&\left\|\sum_{i}a_{ii}(g_i^2-1)\right\|_p
\sim
\left\|\sum_{i}a_{ii}(g_i^2-1)\right\|_1+\sqrt{p}\sup_{\|x\|_2\leq 1}\left\|\sum_{i}a_{ii}x_i\right\|
+p\sup_{i}\|a_{ii}\|
\\
&\leq C\left(\Ex\left\|\sum_{ij}a_{ij}(g_ig_j-\delta_{ij})\right\|+\sqrt{p}\sup_{\|(x_{ij})\|_2\leq 1}\left\|\sum_{ij}a_{ij}x_{ij}\right\|+p\sup_{\|x\|_2\leq 1,\|y\|_2\leq 1}\left\|\sum_{ij}a_{ij}x_iy_j\right\|\right),
\end{align*}
where in the last inequality we used Lemma \ref{lem:lower1}.
\end{proof}

From the next proposition it follows that in all our main results we can replace the term $\sup_{\|x\|_2\leq 1}\Ex\|\sum_{i \neq j}a_{ij}x_ig_j\|$ by $\sup_{\|x\|_2\leq 1}\Ex\|\sum_{ij}a_{ij}x_ig_j\|$.
\begin{prop}
\label{prop:diagest}
Under the assumption of Proposition \ref{prop:lower2d} we have for $p\geq 1$,
\begin{align*}
\sqrt{p}\sup_{\|x\|_2\leq 1}\Ex\left\|\sum_{i}a_{ ii}{x_ig_i}\right\|
\leq
&C\left(\Ex\left\|\sum_{ij}a_{ij}(g_{ij}-\delta_{ij})\right\|
+\sqrt{p}\sup_{\|(x_{ij})\|_2\leq 1}\left\|\sum_{ij}a_{ij}x_{ij}\right\|\right.
\\
&+\left. p\sup_{\|x\|_2\leq 1,\|y\|_2\leq 1}\left\|\sum_{ij}a_{ij}x_iy_j\right\|\right).
\end{align*}
\end{prop}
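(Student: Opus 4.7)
The plan is to reduce the estimate to Lemma \ref{lem:diag} via a conditional application of the one-dimensional lower bound \eqref{ine:1} for Gaussian linear forms, after decoupling by \eqref{ine:2}.

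First, I would observe that by \eqref{ine:2} combined with Lemma \ref{lem:diag},
\[
\left\|\sum_i a_{ii} g_i g_i'\right\|_p \sim \left\|\sum_i a_{ii}(g_i^2-1)\right\|_p
\]
is already bounded, up to a universal multiplicative constant, by the right-hand side of Proposition \ref{prop:diagest}, where $(g_i')_i$ denotes an independent copy of $(g_i)_i$. It is therefore enough to establish
\[
\sqrt{p}\sup_{\|x\|_2\le 1}\Ex\left\|\sum_i a_{ii} x_i g_i\right\| \le C\left\|\sum_i a_{ii} g_i g_i'\right\|_p .
\]

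For this, I would condition on $(g_i)_i$ and view the decoupled bilinear form $\sum_i a_{ii} g_i g_i'$ as a Gaussian linear form in $(g_i')_i$ with deterministic (given $g$) coefficients $b_i := a_{ii} g_i \in F$. Applying \eqref{ine:1} conditionally yields, pointwise in $g$,
\[
\left(\Ex_{g'} \left\|\sum_i a_{ii} g_i g_i'\right\|^p\right)^{1/p} \ge \frac{\sqrt p}{C}\sup_{\|x\|_2\le 1}\left\|\sum_i x_i a_{ii} g_i\right\|.
\]
Raising to the $p$-th power, taking expectation in $g$, and applying the two-step inequality
\[
\Ex_g \sup_{\|x\|_2\le 1}\left\|\sum_i x_i a_{ii} g_i\right\|^p \ge \sup_{\|x\|_2\le 1}\Ex_g\left\|\sum_i x_i a_{ii} g_i\right\|^p \ge \sup_{\|x\|_2\le 1}\left(\Ex_g\left\|\sum_i x_i a_{ii} g_i\right\|\right)^p
\]
(the first step by monotonicity, the second by Jensen's inequality) produces the desired inequality.

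I do not foresee any serious obstacle: the argument is essentially a concatenation of results already established in the excerpt. The only mildly delicate point is the exchange of the supremum over $x$ with the expectation in $g$, which is handled cleanly by the Jensen/monotonicity step above. Combining this with the initial reduction via \eqref{ine:2} and Lemma \ref{lem:diag} completes the proof.
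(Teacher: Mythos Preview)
Your proposal is correct and follows essentially the same route as the paper: both arguments reduce the claim to Lemma \ref{lem:diag} via \eqref{ine:2}, apply \eqref{ine:1} conditionally on $(g_i)_i$ to the decoupled form $\sum_i a_{ii}g_ig_i'$, and handle the interchange of supremum and expectation by Jensen's inequality. The only cosmetic difference is that the paper first swaps $\sup_x$ and $\Ex_g$ and then applies Jensen to the concave map $z\mapsto z^{1/p}$, whereas you raise to the $p$-th power first and apply Jensen to the convex map $z\mapsto z^p$.
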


\begin{proof}
Let $(g_i')_i$ be an independent copy of the sequence $(g_i)_i$. Denoting by $\Ex'$ the expectation with respect to the variables $(g_i')_i$, we may estimate
\begin{align*}
\sqrt{p}\sup_{\|x\|_2\leq 1}\Ex\left\|\sum_{i}a_{ii}{x_ig_i}\right\|&\leq \Ex \sqrt{p}\sup_{\|x\|_2\leq 1} \left\|\sum_{i}a_{ii}{x_ig_i}\right\|
  \leq C\Ex \left(\Ex'\left \|\sum_{i}a_{ii} g_ig_i'\right\|^p\right)^{1/p} \\
  & \leq  C  \left\|\sum_i a_{ii} g_i g_i'\right\|_p \leq C \left\|\sum_i a_{ii} (g_i^2 - 1)\right\|_p,
\end{align*}
where the second inequality follows from \eqref{ine:1} applied conditionally on $(g_i)_i$, the third one from Jensen's inequality and the last one
 from \eqref{ine:2}. The assertion of the proposition follows now by Lemma \ref{lem:diag}.
\end{proof}

For the off-diagonal terms we use first the concentration approach.

\begin{prop}
\label{prop:red}
For $p\geq 1$ we have
\begin{multline*}
\left\|\sum_{i\neq j}a_{ij}g_ig_j\right\|_p\leq
C\Bigg(\Ex\left\|\sum_{i\neq j}a_{ij}g_ig_j\right\|
+\sqrt{p}\Ex\sup_{\|x\|_2\leq 1}\left\|\sum_{i\neq j}a_{ij}x_ig_j\right\|\\
+p\sup_{\|x\|_2\leq 1,\|y\|_2\leq 1}\left\|\sum_{ij}a_{ij}x_iy_j\right\|
\Bigg).
\end{multline*}
\end{prop}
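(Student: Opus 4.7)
The plan is to combine the decoupling inequality of Theorem \ref{th:decoupling} with a two-step application of Gaussian concentration for Lipschitz functions. First, by decoupling we may replace the quantity $\sum_{i\neq j}a_{ij}g_ig_j$ by the decoupled form $S := \sum_{i\neq j}a_{ij}g_ig_j'$, where $(g_i')_i$ is an independent copy of $(g_i)_i$, at the cost of a universal constant. Conditioning on $g=(g_i)_i$, the function $g' \mapsto \|S\|$ is Lipschitz with constant
\[
L(g) := \sup_{\|y\|_2\leq 1}\Bigl\|\sum_{i\neq j}a_{ij}g_iy_j\Bigr\|,
\]
since for any perturbation $g'-\tilde g'$ one gets $\|\sum_{i\neq j}a_{ij}g_i(g_j'-\tilde g_j')\| \le \|g'-\tilde g'\|_2 L(g)$ by Cauchy--Schwarz. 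Standard Gaussian concentration (in its $L_p$ form $\|f(G)-\Ex f(G)\|_p \le C\sqrt{p}\|f\|_{\mathrm{Lip}}$) then yields, conditionally on $g$,
\[
\bigl\|\|S\|\bigr\|_{L_p(g')} \le h(g) + C\sqrt{p}\,L(g), \qquad h(g):=\Ex_{g'}\|S\|.
\]
Taking the $L_p$ norm in $g$ and using the triangle inequality reduces the problem to bounding $\|h(g)\|_{L_p(g)}$ and $\|L(g)\|_{L_p(g)}$.

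Next, I would show that both $h$ and $L$ are themselves Lipschitz functions of $g$ with deterministic constants. For $h$, writing $h(g)-h(\tilde g) = \Ex_{g'}(\|S(g,g')\|-\|S(\tilde g,g')\|)$ and applying the same Cauchy--Schwarz reduction (now in the $g$ variable), one gets Lipschitz constant at most
\[
K := \Ex\sup_{\|x\|_2\leq 1}\Bigl\|\sum_{i\neq j}a_{ij}x_ig_j\Bigr\|.
\]
For $L$, the analogous argument gives Lipschitz constant at most $\sup_{\|x\|_2, \|y\|_2\leq 1}\|\sum_{i\neq j}a_{ij}x_iy_j\|$; and this quantity differs from $V := \sup_{\|x\|_2, \|y\|_2\leq 1}\|\sum_{ij}a_{ij}x_iy_j\|$ by the diagonal term $\sup\|\sum_i a_{ii}x_iy_i\| = \sup_i\|a_{ii}\|$, which is itself $\le V$ by testing $x=y=e_i$. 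A second application of Gaussian concentration then gives
\[
\|h\|_{L_p(g)} \le \Ex h + C\sqrt{p}\,K, \qquad \|L\|_{L_p(g)} \le \Ex L + C\sqrt{p}\,V.
\]
Finally, by the symmetry $a_{ij}=a_{ji}$ one has $\Ex L = K$ (relabel $i\leftrightarrow j$ inside the supremum), and $\Ex h = \Ex\|S\|$ is comparable to $\Ex\|\sum_{i\neq j}a_{ij}g_ig_j\|$ by another application of decoupling at $p=1$. Combining everything yields the claimed bound.

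There is no real obstacle here beyond careful bookkeeping: the two applications of Gaussian concentration peel off the $\sqrt{p}\,K$ and $pV$ terms in succession, while decoupling ensures that the remaining expectation is expressed in the desired form $\Ex\|\sum_{i\neq j}a_{ij}g_ig_j\|$. The only mildly delicate point is checking that the Lipschitz constant of $L(g)$ is controlled by $V$ rather than by its off-diagonal counterpart, which is handled by the elementary diagonal estimate above.
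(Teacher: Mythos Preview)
Your argument is correct. The iterated Lipschitz concentration after decoupling works exactly as you describe: the Lipschitz constants of $h$ and $L$ are indeed $K$ and (up to a factor $2$) $V$, and the identity $\Ex L = K$ follows from the symmetry of $(a_{ij})$ precisely as you argue. The diagonal estimate you use to pass from $\sup_{\|x\|_2,\|y\|_2\le 1}\|\sum_{i\neq j}a_{ij}x_iy_j\|$ to $V$ is the same one the paper records in \eqref{eq:normopwithoutdiag}.

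The route, however, is genuinely different from the paper's. The paper does \emph{not} decouple; it works directly with the quadratic form $z\mapsto\|\sum_{i\neq j}a_{ij}z_iz_j\|$ on a single Gaussian vector and uses the set form of Gaussian concentration. Concretely, it defines a ``good'' set $A\subset\er^n$ on which both $\|\sum_{i\neq j}a_{ij}z_iz_j\|$ and $\sup_{\|x\|_2\le 1}\|\sum_{i\neq j}a_{ij}x_iz_j\|$ are bounded by four times their expectations, observes that $\gamma_n(A)\ge 1/2$, and then checks by elementary algebra that for $z\in A+tB_2^n$ one has $\|\sum_{i\neq j}a_{ij}z_iz_j\|\le 4S(t)$ with $S(t)$ the quadratic in $t$ built from the three target quantities. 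A single application of $\gamma_n(A+tB_2^n)\ge 1-e^{-t^2/2}$ and integration by parts then finishes. Your approach trades this single geometric step for two passes of the Lipschitz $L_p$-inequality plus two invocations of decoupling (once at the start, once at $p=1$ to identify $\Ex h$), which is slightly less economical in constants but more modular and arguably easier to iterate to higher-order chaoses.
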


\begin{proof}
Let
\begin{align*}
A:=\Bigg\{ z\in \er^n\colon\
&\left\|\sum_{i\neq j}a_{ij}z_iz_j\right\|\leq 4\Ex\left\|\sum_{i\neq j}a_{ij}g_ig_j\right\|,
\\
&\sup_{\|x\|_2\leq 1}\left\|\sum_{i\neq j}a_{ij}x_iz_j\right\|\leq
4\Ex \sup_{\|x\|_2\leq 1}\left\|\sum_{i\neq j}a_{ij}x_ig_j\right\|
\Bigg\}.
\end{align*}

Then $\gamma_{n}(A)\geq \frac{1}{2}$ by the Chebyshev inequality. Gaussian concentration gives
$\gamma_{n}(A+tB_{2}^{n})\geq 1-e^{-t^2/2}$ for $t\geq 0$. It is easy to check that for
$z\in A+tB_{2}^n$ we have
\[
\left\|\sum_{i\neq j}a_{ij}z_iz_j\right\|\leq 4S(t),
\]
where
\[
S(t)=\Ex\left\|\sum_{i\neq j}a_{ij}g_ig_j\right\|+2t\Ex \sup_{\|x\|_2\leq 1}\left\|\sum_{i\neq j}a_{ij}x_ig_j\right\|
+t^2\sup_{\|x\|_2\leq 1,\|y\|_2\leq 1}\left\|\sum_{i\neq j}a_{ij}x_iy_j\right\|.
\]
So
\[
\Pr\left(\left\|\sum_{i\neq j}a_{ij}g_ig_j\right\|> 4S(t)\right)\leq e^{-t^2/2}\quad \mbox{for }t\geq 0.
\]
Integrating by parts we get
$\|\sum_{i\neq j}a_{ij}g_ig_j\|_p\leq CS(\sqrt{p})$ for $p\geq 1$, which ends the proof, since
\begin{align}
\notag
\sup_{\|x\|_2\leq 1,\|y\|_2\leq 1}\left\|\sum_{i\neq j}a_{ij}x_iy_j\right\|
&\le \sup_{\|x\|_2\leq 1,\|y\|_2\leq 1}\left\|\sum_{i,j}a_{ij}x_iy_j\right\|+\max_{i}\|a_{ii}\|
\\
\label{eq:normopwithoutdiag}
&\le 2\sup_{\|x\|_2\leq 1,\|y\|_2\leq 1}\left\|\sum_{i,j}a_{ij}x_iy_j\right\|.
\end{align}
\end{proof}

For any symmetric matrix by \eqref{eq:decoupling} we have
$\Ex\|\sum_{i\neq j}a_{ij}g_ig_j\|\sim \Ex\|\sum_{i\neq j}a_{ij}g_ig_j'\|$. Moreover introducing decoupled
chaos enables us to release the assumptions of the symmetry of the matrix and zero diagonal.

Taking into account the above observations, Conjecture \ref{conj1_2d} reduces to the statement that for any $p\geq 1$ and any finite matrix $(a_{ij})$ in $(F,\|\ \cdot \ \|)$ we have
\begin{align}
\Ex\sup_{\|x\|_2\leq 1}\left\|\sum_{ij}a_{ij}g_ix_j\right\|
&\leq
C\Bigg(\frac{1}{\sqrt{p}}\Ex\left\|\sum_{ij}a_{ij}g_ig'_j\right\|
+\sup_{\|x\|_2\leq 1}\Ex\left\|\sum_{ij}a_{ij}g_ix_j\right\| \nonumber
\\
&\phantom{aaaaa}+\sup_{\|(x_{ij})\|_2\leq 1}\left\|\sum_{ij}a_{ij}x_{ij}\right\|
+\sqrt{p}\sup_{\|x\|_2\leq 1,\|y\|_2\leq 1}\left\|\sum_{ij}a_{ij}x_iy_j\right\|
\Bigg).\label{conj:supgauss0}
\end{align}

Let us rewrite \eqref{conj:supgauss0} in another language. We may assume that $F=\er^m$ for some
finite $m$ and $a_{ij}=(a_{ijk})_{k\leq m}$. Let $T=B_{F^*}$ be the unit ball in the dual space $F^*$.
Then \eqref{conj:supgauss0} takes the following form.

\begin{conj}
\label{conj:supgauss}

Let $p\geq 1$. Then for any triple indexed matrix $(a_{ijk})_{i,j\leq n, k\leq m}$ and bounded nonempty set $T\subset\er^m$ we have
\begin{align}\label{eq:corollary-supremum}
\Ex\sup_{\|x\|_2\leq 1, t\in T}\left|\sum_{ijk}a_{ijk}g_ix_jt_k\right|
&\leq
C\Bigg(\frac{1}{\sqrt{p}}\Ex\sup_{t\in T}\left|\sum_{ijk}a_{ijk}g_ig'_jt_k\right|
\nonumber\\
&\phantom{aa}
+\sup_{\|x\|_2\leq 1}\Ex\sup_{t\in T}\left|\sum_{ijk}a_{ijk}g_ix_jt_k\right|
+\sup_{t\in T}\left(\sum_{ij}\left(\sum_k a_{ijk}t_k\right)^2\right)^{1/2}
\nonumber\\
&\phantom{aa}+\sqrt{p}\sup_{\|x\|_2\leq 1,t\in T}\left(\sum_i\left(\sum_{jk}a_{ij}x_jt_k\right)^2\right)^{1/2}
\Bigg).
\end{align}
\end{conj}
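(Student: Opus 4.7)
The plan is to reformulate the inequality as a chaining estimate for the Gaussian process $Z_{x,t}$ on $B_2^n \times T$. Set $\beta(t) := \bigl(\sum_k a_{ijk} t_k\bigr)_{ij}$, so the LHS equals $E := \Ex \sup_{t \in T} \|\beta(t)^T g\|_2$. The four RHS terms become, in order, the decoupled chaos $\Ex \sup_t |g^T\beta(t)g'|$, the sup-of-means $\sup_{\|x\|_2 \le 1}\Ex\sup_t |g^T\beta(t)x|$, the HS-radius $\sup_t\|\beta(t)\|_{\mathrm{HS}}$ and the operator-radius $\sup_t\|\beta(t)\|_{\mathrm{op}}$; denote them $\mathsf{A},\mathsf{M},\mathsf{W},\mathsf{D}$. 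Introduce on $T$ the pseudometrics $d_{\mathrm{HS}}(t,s) := \|\beta(t)-\beta(s)\|_{\mathrm{HS}}$ and $d_{\mathrm{op}}(t,s) := \|\beta(t)-\beta(s)\|_{\mathrm{op}}$.

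Step 1 (generic chaining). The scalar process $V_t := \|\beta(t)^T g\|_2$ has $|V_t - V_s| \le \|(\beta(t)-\beta(s))^T g\|_2$, a random variable with expectation at most $d_{\mathrm{HS}}(t,s)$ and subgaussian fluctuation of parameter $d_{\mathrm{op}}(t,s)$ about its mean, since $g \mapsto \|(\beta(t)-\beta(s))^T g\|_2$ is $d_{\mathrm{op}}(t,s)$-Lipschitz. Generic chaining along an admissible sequence with $|\mathcal{A}_\ell| \le 2^{2^\ell}$, via union bound and Gaussian concentration at each level followed by integration, gives
\begin{equation*}
E \;\lesssim\; \mathsf{W} + \gamma_\infty(T, d_{\mathrm{HS}}) + \gamma_2(T, d_{\mathrm{op}}),
\end{equation*}
where $\gamma_\infty(T, d) := \inf_{(\mathcal{A}_\ell)} \sup_t \sum_\ell \diam_d(A_\ell(t))$ is the pure-drift chaining functional and the base term is controlled by $\Ex V_{t_0} \le \|\beta(t_0)\|_{\mathrm{HS}} \le \mathsf{W}$.

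Step 2 (bound $\gamma_2(T, d_{\mathrm{op}})$ by $\mathsf{A}$ and $\mathsf{D}$). The two-sided Hanson--Wright inequality yields $\Pr\bigl(|g^T(\beta(t)-\beta(s))g'| \ge c\, d_{\mathrm{op}}(t,s)\, p\bigr) \ge e^{-Cp}$, which applied to a $d_{\mathrm{op}}$-packing of cardinality $e^{cp}$ at scale $\epsilon$ forces $\mathsf{A} \gtrsim p\epsilon$. Hence $\log N(T, d_{\mathrm{op}}, \epsilon) \lesssim \mathsf{A}/\epsilon$, and Dudley's bound gives
$\gamma_2(T, d_{\mathrm{op}}) \lesssim \int_0^{2\mathsf{D}} \sqrt{\mathsf{A}/\epsilon}\, d\epsilon \lesssim \sqrt{\mathsf{A}\mathsf{D}} \le \mathsf{A}/\sqrt{p} + \sqrt{p}\,\mathsf{D}$
for every $p \ge 1$ by AM--GM. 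This accounts for the first and fourth terms of the RHS of \eqref{eq:corollary-supremum}.

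Step 3 (bound $\gamma_\infty(T, d_{\mathrm{HS}})$ by $\mathsf{M} + \mathsf{W}$, the main obstacle). The heuristic is that $\mathsf{M} \asymp \sup_{\|x\|_2 \le 1} \gamma_2(T, d_x)$ with $d_x(t,s) := \|(\beta(t)-\beta(s))x\|_2$ (by the majorizing measure theorem for the linear Gaussian process $(g^T\beta(t)x)_t$), while $d_{\mathrm{HS}}(t,s)^2 = n\int_{S^{n-1}} d_x(t,s)^2\, \sigma(dx)$ is the sphere-average of $d_x^2$, so a $d_{\mathrm{HS}}$-chain should in principle be recoverable from uniform $d_x$-chains via a Fubini--Sudakov argument on the sphere. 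Making this rigorous is essentially a matrix-indexed Bednorz--Lata\l{}a Bernoulli-theorem statement, and the volumetric entropy bounds of Corollary \ref{cor:entrestch} are too weak to deliver it: they only yield the chaos-Sudakov bound $\gamma_\infty(T, d_{\mathrm{HS}}) \lesssim \mathsf{A}$, producing $E \lesssim \mathsf{A} + \mathsf{W}$, which matches Conjecture \ref{conj:supgauss} only in the range $p \asymp 1$. Replacing this $\mathsf{A}$-dependence by an $\mathsf{M}$-dependence uniformly in $p$ is the core open problem, and explains why the paper settles for the $\log(ep)$-loss bound of Theorem \ref{thm:uppper2d1}.
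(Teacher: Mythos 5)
First, a framing point: the statement you were asked to prove is Conjecture~\ref{conj:supgauss}, which the paper itself does \emph{not} prove --- it is the reformulation of Conjecture~\ref{conj1_2d}, and the paper only establishes it up to $\log(ep)$ factors (Proposition~\ref{osz:main}, Theorem~\ref{thm:uppper2d1}) or with the extra term $\Ex\|\sum_{i\neq j}a_{ij}g_{ij}\|$ (Theorem~\ref{thm:upper2d2}). Your Step~3 correctly identifies the obstruction (the absence of a sharp Sudakov-type minoration for Gaussian chaoses), which matches the paper's own diagnosis, and your honesty about not closing that step is appropriate. Your Step~1 is also sound and is organized differently from the paper: you collapse the supremum over $x$ into $\|\beta(t)^{T}g\|_{2}$ and chain on $T$ alone with a drift term in $d_{\mathrm{HS}}$ and a subgaussian fluctuation in $d_{\mathrm{op}}$, whereas the paper chains on $B_2^n\times T$ jointly, combining the measure-theoretic volumetric entropy bound of Corollary~\ref{cor:entrestch}, dual Sudakov in the $x$ variable (Corollary~\ref{cor:decomp2}) and Talagrand's chaos minoration in the $t$ variable (Corollary~\ref{cor:decomp1}), iterated in Proposition~\ref{osz:main}.

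However, Step~2 contains a genuine error, so your partial progress is smaller than you claim. The asserted entropy estimate $\log N(T,d_{\mathrm{op}},\epsilon)\lesssim \mathsf{A}/\epsilon$ is an operator-norm Sudakov minoration for decoupled chaoses, and it is false. Take $m=n$, $a_{ijk}=\epsilon\,\delta_{i1}\delta_{jk}$ and $T=\{e_1,\ldots,e_n\}$, so that $\beta(e_l)=\epsilon\, e_1e_l^{T}$; then all mutual distances satisfy $\|\beta(e_l)-\beta(e_{l'})\|_{\mathrm{op}}=\epsilon\sqrt{2}$, so $\log N(T,d_{\mathrm{op}},\epsilon)=\log n$, while $\mathsf{A}=\epsilon\,\Ex|g_1|\cdot\Ex\max_{l}|g_l'|\sim\epsilon\sqrt{\log n}$. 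The same example refutes your intermediate conclusion $\gamma_2(T,d_{\mathrm{op}})\lesssim\sqrt{\mathsf{A}\mathsf{D}}$, since here $\gamma_2(T,d_{\mathrm{op}})\sim\epsilon\sqrt{\log n}$ whereas $\sqrt{\mathsf{A}\mathsf{D}}\sim\epsilon(\log n)^{1/4}$. The flaw is the inference from ``each of $e^{cp}$ increments exceeds $c\,p\,\epsilon$ with probability at least $e^{-Cp}$'' to ``$\mathsf{A}\gtrsim p\epsilon$'': this reverse union bound requires genuine decorrelation of the variables and is exactly what makes Sudakov-type minorations nontrivial. For chaoses the only available result of this kind is Theorem~\ref{thm:minchaos2d}, which is stated for the Hilbert--Schmidt distance $d_2$ and carries a $\log^{1/4}$ exponent (upgraded to $\log^{1/2}$ only above a scale threshold involving $\diam(\mathcal{A},d_\infty)$); this weakness is precisely the source of the paper's logarithmic losses. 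So of your three steps only the first survives, and the conjecture remains open across the whole range of $p$, not merely in the regime you flag in Step~3.
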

Obviously it is enough to show this for finite sets $T$.

\section{Estimating suprema of Gaussian processes}
\label{sec:supgauss}

To estimate the supremum of a centered Gaussian process $(G_{v})_{v\in V}$ one needs to study the distance
on $V$ given by $d(v,v'):=(\Ex |G_v-G_{v'}|^2)^{1/2}$ (cf. \cite{Ta_book}). In the case of the Gaussian
process from Conjecture \ref{conj:supgauss} this distance is defined on $B_2^n\times T\subset \er^n\times \er^m$
by the formula
\[
d_A((x,t),(x',t')):=\left(\sum_{i}\left(\sum_{jk}a_{ijk}(x_jt_k-x'_jt'_k)\right)^2\right)^{1/2}
=\alpha_A(x\otimes t-x'\otimes t'),
\]
where $x\otimes t=(x_jt_k)_{j,k}\in \er^{nm}$ and $\alpha_A$ is a norm on $\er^{nm}$ given by
\[
\alpha_A(y):=\left(\sum_{i}\left(\sum_{jk}a_{ijk}y_{jk}\right)^2\right)^{1/2},
\]
(as in Conjecture \ref{conj:supgauss} in this section we do not assume that the matrix  $(a_{ijk})_{ijk}$ is symmetric or that it has
$0$ on the generalized diagonal).

Let
\[
B((x,t),d_A,r)=\left\{(x',t')\in \er^n\times T\colon\ \alpha_A(x\otimes t-x'\otimes t')\leq r\right\}
\]
be the closed ball in $d_A$ with center at $(x,t)$ and radius $r$.

Observe that
\[
\mathrm{diam}(B_2^n\times T,d_A)\sim \sup_{\|x\|_2\leq 1,t\in T}\left(\sum_i\left(\sum_{jk}a_{ijk}x_jt_k\right)^2\right)^{1/2}.
\]

Now we try to estimate entropy numbers $N(B_2^n\times T,d_A,\ve)$ for $\ve>0$  (recall that $N(S,\rho,\ve)$ is the smallest number of closed balls with  diameter $\ve$ in metric $\rho$ that cover the set $S$). To this end we
first introduce some notation. For a nonempty  bounded set $S$ in $\er^m$ let
\[
\beta_{A,S}(x):=\Ex\sup_{t\in S}\left|\sum_{ijk}a_{ijk}g_ix_jt_k\right|, \quad x\in\er^n.
\]

Observe that $\beta_{A,S}$ is a norm on $\er^n$. Moreover,
by the classical Sudakov minoration (\cite{Su} or \cite[Theorem 3.18]{LT}) for any $x\in \er^n$ and $\ve>0$ there exists a set $S_{x,\ve}\subset S$ of cardinality
at most $\exp(C\ve^{-2})$ such that
\[
\forall_{t\in  S}\ \exists_{t'\in S_{x,\ve}}\ \alpha_A(x\otimes (t-t'))\leq \ve \beta_{A,S}(x).
\]
For a finite set $S\subset \er^m$ and $\ve>0$ define a measure $\mu_{\ve,S}$ on $\er^n\times S$ in the following way
\[
\mu_{\ve,S}(C):=\int_{\er^n} \sum_{t\in S_{x,\ve}}\delta_{(x,t)}(C)d\gamma_{n,\ve}(x),
\]
where $\gamma_{n,\ve}$ is the distribution of the vector $\ve G_n$ (recall that $G_n$ is the standard Gaussian vector in $\er^n$).
Since $S$ is finite, we can choose sets $S_{x,\ve}$ in such a way that there are no problems with measurability.

To bound $N(B_2^n\times T,d_A,\ve)$ we need two lemmas.

\begin{lem}\cite[Lemma 1]{LaAoP}\label{lem1}
For any norms $\alpha_1,\alpha_2$ on $\er^n$, $y\in B_2^n$ and $\ve>0$,
\[
\gamma_{n,\ve}\left(x\colon\ \alpha_1(x-y)\leq 4\ve\Ex\alpha_1(G_n),\
\alpha_2(x)\leq 4\ve \Ex\alpha_2(G_n)+\alpha_2(y)\right)
\geq \frac{1}{2}\exp(-\ve^{-2}/2).
\]
\end{lem}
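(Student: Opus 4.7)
The plan is to reduce the question to estimating the standard Gaussian mass of a symmetric convex set under a Cameron--Martin shift, then exploit symmetry to kill the linear term in the exponent.

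First I would perform a translation and rescaling. Write $x = \ve u$, where under $\gamma_{n,\ve}$ the variable $u$ is standard Gaussian, and then substitute $w = u - y/\ve$, so that $w$ is Gaussian with mean $-y/\ve$ and identity covariance. The first condition $\alpha_1(x-y) \le 4\ve \Ex \alpha_1(G_n)$ becomes $\alpha_1(w) \le 4 \Ex \alpha_1(G_n)$. For the second condition, the triangle inequality gives
\[
\alpha_2(x) = \alpha_2(\ve w + y) \le \alpha_2(\ve w) + \alpha_2(y),
\]
so it is enough to impose $\alpha_2(w) \le 4 \Ex \alpha_2(G_n)$. Thus the probability in question is bounded below by the mass, under the law of $w$, of the symmetric convex set
\[
B := \bigl\{ w \in \er^n : \alpha_1(w) \le 4 \Ex \alpha_1(G_n),\ \alpha_2(w) \le 4 \Ex \alpha_2(G_n) \bigr\}.
\]

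Next, by Markov's inequality applied to each of the nonnegative random variables $\alpha_i(G_n)$ and a union bound, $\gamma_n(B) \ge 1 - \tfrac14 - \tfrac14 = \tfrac12$. Writing out the density of $w$ explicitly,
\[
\Pr(w \in B) = e^{-\|y\|^2/(2\ve^2)} \int_B \exp\!\bigl(-\langle z, y/\ve\rangle\bigr)\, d\gamma_n(z).
\]
Because $B$ is symmetric about the origin, the integral equals $\int_B \cosh(\langle z, y/\ve\rangle)\, d\gamma_n(z) \ge \gamma_n(B) \ge \tfrac12$. Since $\|y\| \le 1$, combining these pieces yields
\[
\Pr(w \in B) \ge \tfrac12 e^{-\|y\|^2/(2\ve^2)} \ge \tfrac12 e^{-1/(2\ve^2)},
\]
which is the desired lower bound.

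There is no real obstacle here; the one point that needs a little care is noticing that once both constraints are phrased in terms of the single Gaussian variable $w$ (rather than in the original variable $x$, where the second constraint is an absolute bound), the set $B$ is both symmetric and convex, which is exactly what makes the $\cosh \ge 1$ trick work and lets us avoid any appeal to Borell's inequality.
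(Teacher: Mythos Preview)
Your argument is correct: the change of variables $x=\ve u$, $w=u-y/\ve$ reduces both conditions to membership in the symmetric convex body $B$, Markov's inequality and a union bound give $\gamma_n(B)\ge 1/2$, and the Cameron--Martin density computation together with the elementary $\cosh\ge 1$ bound on the symmetric set $B$ yields the factor $\tfrac12 e^{-\|y\|^2/(2\ve^2)}\ge \tfrac12 e^{-\ve^{-2}/2}$.

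Note that the present paper does not prove this lemma at all---it simply quotes it from \cite[Lemma~1]{LaAoP}---so there is no in-text argument to compare with. Your proof is essentially the same as the one given in \cite{LaAoP}: there too the result is obtained by shifting the Gaussian integral and using the symmetry of the intersection of two norm-balls to replace $e^{-\langle z, y/\ve\rangle}$ by $\cosh(\langle z, y/\ve\rangle)\ge 1$.
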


\begin{lem}
\label{lem:meas2d}
For any finite set $S$ in $\er^m$, any $(x,t)\in B_2^n\times S$ and $\ve>0$ we have
\[
\mu_{\ve,S} \left(B\left((x,t),d_A,r(\ve)\right)\right) \geq \frac{1}{2}\exp(-\ve^{-2}/2),
\]
where
\[
r(\ve)=r(A,S,x,t,\ve)
=4\ve^2\Ex\beta_{A,S}(G_n)+\ve\beta_{A,S}(x)+4\ve\Ex\alpha_A(G_n\otimes t).
\]
\end{lem}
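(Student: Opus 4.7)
The plan is to split the distance $d_A((x,t),(x',t')) = \alpha_A(x\otimes t - x'\otimes t')$ via the triangle inequality into an $x$-part and a $t$-part, then control each one separately: the $x$-part by applying Lemma \ref{lem1} to a pair of norms on $\er^n$ designed exactly to match the two summands, and the $t$-part by invoking the Sudakov net $S_{x',\ve}$ that is built into the definition of $\mu_{\ve,S}$.

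Concretely, for a fixed pair $(x,t)\in B_2^n\times S$, I first introduce two (semi)norms on $\er^n$,
\begin{align*}
\alpha_1(z) := \alpha_A(z\otimes t) = \Bigl(\sum_i \Bigl(\sum_j \bigl(\sum_k a_{ijk}t_k\bigr) z_j\Bigr)^2\Bigr)^{1/2},
\qquad
\alpha_2(z) := \beta_{A,S}(z),
\end{align*}
and then apply Lemma \ref{lem1} with $y=x\in B_2^n$ and these $\alpha_1,\alpha_2$. This produces a set
\[
E := \bigl\{x'\in\er^n :\ \alpha_A((x-x')\otimes t)\le 4\ve\Ex\alpha_A(G_n\otimes t),\ \beta_{A,S}(x')\le \beta_{A,S}(x)+4\ve\Ex\beta_{A,S}(G_n)\bigr\}
\]
of $\gamma_{n,\ve}$-measure at least $\tfrac{1}{2}e^{-\ve^{-2}/2}$. (If one insists on genuine norms in Lemma \ref{lem1}, this is a routine approximation: replace $\alpha_i$ by $\alpha_i+\delta\|\cdot\|_2$ and send $\delta\to 0$.)

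Next, for every $x'\in E$ I pick $t'\in S_{x',\ve}$ satisfying the Sudakov approximation $\alpha_A(x'\otimes(t-t'))\le \ve\beta_{A,S}(x')$ that defines the net at $x'$. The triangle inequality then gives
\begin{align*}
\alpha_A(x\otimes t - x'\otimes t')
&\le \alpha_A((x-x')\otimes t) + \alpha_A(x'\otimes(t-t'))\\
&\le 4\ve\Ex\alpha_A(G_n\otimes t) + \ve\beta_{A,S}(x')\\
&\le 4\ve\Ex\alpha_A(G_n\otimes t) + \ve\bigl(\beta_{A,S}(x)+4\ve\Ex\beta_{A,S}(G_n)\bigr) = r(\ve),
\end{align*}
so $(x',t')\in B((x,t),d_A,r(\ve))$. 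Hence for every $x'\in E$ there is at least one atom of $\sum_{t'\in S_{x',\ve}}\delta_{(x',t')}$ lying in the ball, which by the very definition of $\mu_{\ve,S}$ yields
\begin{align*}
\mu_{\ve,S}\bigl(B((x,t),d_A,r(\ve))\bigr) \ge \gamma_{n,\ve}(E) \ge \tfrac{1}{2}\exp(-\ve^{-2}/2).
\end{align*}

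The only place that requires any care is matching the ``shape'' of the Lemma \ref{lem1} estimate to the splitting: one has to realise that the two summands after the triangle inequality are controlled, respectively, by a norm evaluated at $x-x'$ (hence Lemma \ref{lem1} condition on $\alpha_1$) and by a norm evaluated at $x'$ (hence the condition on $\alpha_2$), so the two conditions line up exactly with what Lemma \ref{lem1} delivers. Everything else is algebra.
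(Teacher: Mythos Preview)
Your proof is correct and essentially identical to the paper's: both define the same set of ``good'' $x'$ (your $E$ is exactly the paper's $U$), apply Lemma~\ref{lem1} with the two norms $\alpha_A(\cdot\otimes t)$ and $\beta_{A,S}(\cdot)$ to lower-bound its $\gamma_{n,\ve}$-measure, then pick $t'\in S_{x',\ve}$ and use the same triangle-inequality splitting $\alpha_A(x\otimes t - x'\otimes t')\le \alpha_A((x-x')\otimes t)+\alpha_A(x'\otimes(t-t'))$. The only addition is your explicit remark about approximating seminorms by norms, which the paper leaves implicit.
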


\begin{proof}
Let
\[
U=\left\{x'\in \er^n\colon\ \beta_{A,S}(x')\leq 4\ve\Ex\beta_{A,S}(G_n)+\beta_{A,S}(x),
\alpha_A((x-x')\otimes t)\leq 4\ve \Ex\alpha_A(G_n\otimes t)\right\}.
\]
For any $x'\in U$ there exists $t'\in S_{x',\ve}$ such that
$\alpha_A(x'\otimes(t-t'))\leq \ve \beta_{A,S}(x')$. By the triangle inequality
\[
\alpha_A(x\otimes t-x'\otimes t')\leq \alpha_A((x-x')\otimes t)+\alpha_A(x'\otimes(t-t'))\leq r(\ve).
\]
Thus, by Lemma \ref{lem1}, $\mu_{\ve,S} \left(B\left((x,t),d_A,r(\ve)\right)\right) \geq\gamma_{n,\ve}(U)\geq \frac{1}{2}\exp(-\ve^{-2}/2)$.
\end{proof}

Having Lemma \ref{lem:meas2d} we can estimate the entropy numbers by a version of the usual volumetric argument.

\begin{cor}
\label{cor:entrestch}
For any $\ve>0$, $U\subset B_2^n$ and $S\subset \er^m$,
\begin{equation}
\label{eq:entrestch}
N\left(U\times S,d_A,8\ve^2\Ex\beta_{A,S}(G_n)
+2\ve\sup_{x\in U}\beta_{A,S}(x)+8\ve\sup_{t\in S}\Ex\alpha_A(G_n\otimes t)\right)
\leq \exp(C\ve^{-2})
\end{equation}
and for any $\delta>0$,

\begin{align*}
\sqrt{\log N(U\times S,d_A,\delta)}\leq
C\bigg(&\delta^{-1}\left(\sup_{x\in U}\beta_{A,S}(x)+\sup_{t\in S}\Ex\alpha_A(G_n\otimes t)\right)\\
&+\delta^{-1/2}(\Ex\beta_{A,S}(G_n))^{1/2}\bigg).
\end{align*}
\end{cor}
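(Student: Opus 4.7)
The plan is to derive Corollary~\ref{cor:entrestch} as a volumetric consequence of Lemma~\ref{lem:meas2d}, using the auxiliary measure $\mu_{\ve,S}$ already set up, and then optimize in $\ve$ to pass to the second statement. The key structural fact is that $\mu_{\ve,S}(\er^n\times S)=\int|S_{x,\ve}|\,d\gamma_{n,\ve}(x)\le \exp(C\ve^{-2})$ by the Sudakov-based cardinality bound $|S_{x,\ve}|\le \exp(C\ve^{-2})$; and by Lemma~\ref{lem:meas2d}, every ball $B((x,t),d_A,r)$ with $r\ge r(A,S,x,t,\ve)$ has $\mu_{\ve,S}$-mass at least $\tfrac12\exp(-\ve^{-2}/2)$. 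The only parameter information I will need about $r(A,S,x,t,\ve)$ is the uniform upper bound $r_{\max}:=4\ve^2\Ex\beta_{A,S}(G_n)+\ve\sup_{x\in U}\beta_{A,S}(x)+4\ve\sup_{t\in S}\Ex\alpha_A(G_n\otimes t)$, so that any ball of radius $r_{\max}$ centered in $U\times S$ is $\mu_{\ve,S}$-large.

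For \eqref{eq:entrestch}, I would pick a maximal subset $\{(x_i,t_i)\}_{i\le N}\subset U\times S$ whose pairwise $d_A$-distances are strictly greater than $2r_{\max}$. Then the open balls $B((x_i,t_i),d_A,r_{\max})$ are pairwise disjoint, so the measure bound forces
\[
N\cdot\tfrac{1}{2}\exp(-\ve^{-2}/2)\le \mu_{\ve,S}(\er^n\times S)\le \exp(C\ve^{-2}),
\]
hence $N\le \exp(C'\ve^{-2})$. Maximality implies that the dilated balls of radius $2r_{\max}$ cover $U\times S$, and each such ball has $d_A$-diameter at most $4r_{\max}$. Up to the universal constant hidden in $C$, this gives \eqref{eq:entrestch} with the stated right-hand side $8\ve^2\Ex\beta_{A,S}(G_n)+2\ve\sup_{x\in U}\beta_{A,S}(x)+8\ve\sup_{t\in S}\Ex\alpha_A(G_n\otimes t)$ (the factor of $2$ between the diameter $4r_{\max}$ and $2r_{\max}$ is absorbed into the constant $C$; alternatively one repeats the argument with the maximal set taken at separation $r_{\max}$ and rescales $\ve$ by a constant, which yields exactly the displayed form).

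For the second inequality, I would invert \eqref{eq:entrestch} by choosing $\ve=\ve(\delta)$ that makes each of the three summands bounded by a constant multiple of $\delta$. Concretely, set
\[
\ve(\delta)^{-1}:=C\Bigl(\delta^{-1/2}(\Ex\beta_{A,S}(G_n))^{1/2}+\delta^{-1}\sup_{x\in U}\beta_{A,S}(x)+\delta^{-1}\sup_{t\in S}\Ex\alpha_A(G_n\otimes t)\Bigr),
\]
so that $8\ve^2\Ex\beta_{A,S}(G_n)\le \delta$, $2\ve\sup\beta_{A,S}\le \delta$ and $8\ve\sup\Ex\alpha_A(G_n\otimes\cdot)\le \delta$ simultaneously. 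Then \eqref{eq:entrestch} gives $\log N(U\times S,d_A,3\delta)\le C\ve^{-2}$; taking square roots and replacing $3\delta$ by $\delta$ (which only rescales the constant) yields the desired bound on $\sqrt{\log N(U\times S,d_A,\delta)}$.

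I do not expect any real obstacle in this argument: all the technical work has been done in Lemmas~\ref{lem1} and \ref{lem:meas2d}. The only subtle point is keeping track of the two-term-vs-three-term structure when passing from the uniform $r_{\max}$ to the covering radius, and the harmless factor of $2$ between the radius of the disjoint balls and the radius of the covering balls; both are cosmetic and are absorbed into the universal constant $C$.
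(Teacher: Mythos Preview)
Your argument is essentially identical to the paper's: both run the volumetric/packing argument using the $\mu_{\ve,S}$-mass lower bound from Lemma~\ref{lem:meas2d} against the total mass $\mu_{\ve,S}(\er^n\times S)\le \exp(C\ve^{-2})$, and both deduce the second inequality by inverting the first. The one point the paper treats explicitly and you gloss over is the regime of large $\ve$: from $N\cdot\tfrac12\exp(-\ve^{-2}/2)\le \exp(C\ve^{-2})$ you cannot conclude $N\le \exp(C'\ve^{-2})$ uniformly in $\ve$, since the factor $2$ is not dominated by $\exp(C''\ve^{-2})$ when $\ve$ is large. The paper handles this by a case split: for $\ve>2$ one checks directly that $2r_{\max}\ge 4\sup_{x\in U}\beta_{A,S}(x)\ge\diam(U\times S,d_A)$, so $N=1$; for $\ve\le 2$ the factor $2$ is absorbed since $\ve^{-2}\ge 1/4$.
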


\begin{proof}
Let $r=4\ve^2\Ex\beta_{A,S}(G_n)+\ve\sup_{x\in U}\beta_{A,S}(x)+4\ve\sup_{t\in S}\Ex\alpha_A(G_n\otimes t)$ and $N=N(U\times S,d_A,2r)$. Then there exist points $(x_i,t_i)_{i=1}^N$ in $U\times S$ such that
$d_A((x_i,t_i),(x_j,t_j))> 2r$. To show \eqref{eq:entrestch} we consider two cases.

If $\ve>2$ then
\begin{align*}
2r
&\geq 4\sup_{x\in U}\beta_{A,S}(x)\geq 4\sup_{(x,t)\in U\times S}\Ex\left|\sum_{ijk}a_{ijk}g_it_jx_k\right|\\
&=4\sqrt{\frac{2}{\pi}}\sup_{(x,t)\in U\times S}\left(\sum_{i}\left(\sum_{jk}a_{ijk}t_jx_k\right)^2\right)^{1/2}\geq\diam(U\times S,d_A)
\end{align*}
so $N=1\leq \exp(C\ve^{-2})$.

If $\ve<2$, note that the balls $B((x_i,t_i),d_A,r)$ are disjoint and, by
Lemma \ref{lem:meas2d}, each of these balls has $\mu_{\ve,S}$ measure at least $\frac{1}{2}\exp(-\ve^{-2}/2)\geq\exp(-5\ve^{-2})$. On the other hand we obviously have $\mu_{\ve,S}(\er^n\times S)\leq \exp(C\ve^{-2})$. Comparing the upper and lower bounds on $\mu_{\ve,S}(\er^n\times S)$ gives \eqref{eq:entrestch} in this case.

The second estimate from the assertion is an obvious consequence of the first one.
\end{proof}

\begin{rem}\label{Dud}
The classical Dudley's bound on suprema of Gaussian processes (see e.g., \cite[Corollary 5.1.6]{delaPenaGine}) gives
\[
\Ex\sup_{\|x\|_2\leq 1, t\in T}\left|\sum_{ijk}a_{ijk}g_ix_jt_k\right|
\leq C\int_{0}^{\diam(B_2^n\times T,d_A)}\sqrt{\log N(B_2^n\times T,d_A,\delta)}d\delta.
\]
Observe that
\begin{align*}
\int_{0}^{\diam(B_2^n\times T,d_A)}\delta^{-1/2}(\Ex\beta_{A,T}(G_n))^{1/2}d\delta
&=2\sqrt{\diam(B_2^n\times T,d_A)\Ex\beta_{A,T}(G_n)}
\\
&\leq\frac{1}{\sqrt{p}}\Ex\beta_{A,T}(G_n)+\sqrt{p}\diam(B_2^n\times T,d_A)
\end{align*}
appears on the right hand side of \eqref{eq:corollary-supremum}. Unfortunately the other term in the estimate of
$\log^{1/2} N(B_2^n\times T,d_A,\delta)$ is not integrable. The remaining part of the proof is devoted to improve on Dudley's bound.
\end{rem}

We will now continue along the lines of \cite{LaAoP}.  We will need in particular to partition the set $T$ into smaller pieces $T_i$ such that $\sup_{t,s\in T_i}\Ex\alpha_A(G_n\otimes (t-t'))$ is small on each piece.
To this end we apply the following Sudakov-type estimate for chaoses, derived by Talagrand
(\cite{Ta_ch} or \cite[Section 8.2]{Ta_book}).

\begin{thm}
\label{thm:minchaos2d}
Let $\mathcal{A}$ be a subset of $n$ by $n$ real valued matrices and $d_2$, $d_{\infty}$ be distances associated to the Hilbert-Schmidt and operator norms respectively.
Then
\[
\ve\log^{1/4} N(\mathcal{A},d_2,\ve)\leq C\Ex\sup_{a\in \mathcal{A}}\sum_{ij}a_{ij}g_ig_j'
\quad \mbox{ for } \ve>0
\]
and
\[
\ve\log^{1/2} N(\mathcal{A},d_2,\ve)\leq C\Ex\sup_{a\in \mathcal{A}}\sum_{ij}a_{ij}g_ig_j'
\quad \mbox{ for } \ve>C\sqrt{\mathrm{diam}(\mathcal{A},d_{\infty})\Ex\sup_{a\in \mathcal{A}}\sum_{ij}a_{ij}g_ig_j'}.
\]
\end{thm}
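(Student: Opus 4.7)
The plan is to treat this as a known theorem of Talagrand and invoke it as a black box via \cite{Ta_ch} or \cite[Section 8.2]{Ta_book}; should an independent proof be needed, I would proceed as follows. Set $M:=\Ex\sup_{a\in\mathcal{A}}\sum_{ij}a_{ij}g_ig_j'$.

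First I would apply the classical Sudakov minoration conditionally on one of the Gaussian sequences. Conditioning on $g=(g_i)_i$, the process $a\mapsto \sum_{ij}a_{ij}g_ig_j'$ is centered Gaussian in $g'$ with canonical distance $d_g(a,b)=\|(a-b)g\|_2$. Classical Sudakov thus yields $\ve\sqrt{\log N(\mathcal{A},d_g,\ve)}\leq C\,\Ex[\sup_{a}\sum_{ij}a_{ij}g_ig_j'\mid g]$ almost surely, and integrating in $g$ gives
\[
\ve\,\Ex\sqrt{\log N(\mathcal{A},d_g,\ve)}\leq CM.
\]

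Second, I would transfer this random-distance bound to a deterministic one in $d_2$. The key input is that $\Ex d_g(a,b)^2=\|a-b\|_{HS}^2$ and that $g\mapsto d_g(a,b)$ is $\|a-b\|_{op}$-Lipschitz, so Gaussian concentration yields
\[
\Pr\bigl(d_g(a,b)\geq c\|a-b\|_{HS}\bigr)\geq 1-\exp\bigl(-c\|a-b\|_{HS}^2/\|a-b\|_{op}^2\bigr).
\]
Given $N$ points in $\mathcal{A}$ pairwise $\ve$-separated in $d_2$, Talagrand's combinatorial selection argument extracts, with positive probability, a subset of size $N'\geq \exp(c\sqrt{\log N})$ whose points remain $\Omega(\ve)$-separated in $d_g$; the conditional Sudakov bound applied to this subset gives $\ve\sqrt{\log N'}\leq CM$, i.e.\ $\ve\log^{1/4}N(\mathcal{A},d_2,\ve)\leq CM$. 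This selection step, which extracts only $\exp(\sqrt{\log N})$ well-separated points out of $N$, is the main obstacle: because the concentration exponent $\|a-b\|_{HS}^2/\|a-b\|_{op}^2$ can be as small as $1$ (and a simple union bound over all $\binom{N}{2}$ pairs is then useless), a direct argument does not preserve the full $\log^{1/2}N$ and one is forced to the weaker fourth-root exponent.

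For the second, sharper bound, the hypothesis $\ve>C\sqrt{\diam(\mathcal{A},d_\infty)M}$ forces the exponent $\|a-b\|_{HS}^2/\|a-b\|_{op}^2$ to dominate $\log N$ for every pair of $\ve$-separated points (since $\|a-b\|_{HS}\geq \ve$ while $\|a-b\|_{op}\leq \diam(\mathcal{A},d_\infty)$). A single union bound over all pairs then suffices to show that $d_g(a,b)\gtrsim\|a-b\|_{HS}$ simultaneously for all separated pairs with probability close to one, the combinatorial loss in the selection step is avoided, and the full $\log^{1/2}N$ Sudakov bound in $d_2$ is recovered.
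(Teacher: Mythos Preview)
Your proposal matches the paper exactly: the paper does not prove this theorem but simply quotes it as a known result of Talagrand, citing \cite{Ta_ch} and \cite[Section 8.2]{Ta_book}, just as you propose to do. Your supplementary sketch of Talagrand's argument (conditional Sudakov in $g'$, Gaussian concentration for $g\mapsto\|(a-b)g\|_2$, the combinatorial selection yielding $\exp(c\sqrt{\log N})$ well-separated points, and the union-bound shortcut under the large-$\ve$ hypothesis) is a faithful outline of the original proof and is not required here.
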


To make the notation more compact let for $T\subset \er^m$ and $V\subset \er^n\times\er^m$,
\begin{align*}
s_A(T)&:=\Ex\beta_{A,T}(G_n)=\Ex\sup_{t\in T}\left|\sum_{ijk}a_{ijk}g_ig'_jt_k\right|,
%%\quad \mbox{ for } T\subset \er^m,
\\
F_A(V)&:=\Ex\sup_{(x,t)\in V}\sum_{ijk}a_{ijk}g_ix_jt_k
\\
\Delta_{A,\infty}(T)&:=\sup_{\|x\|_2\leq 1,\|y\|_2\leq 1, t,t'\in T}
\left|\sum_{ijk}a_{ijk}x_iy_j(t_k-t'_k)\right|,
\\
\Delta_A(V)&:=\mathrm{diam}(V,d_A)=\sup_{(x,t),(x',t')\in V}\alpha_A(x\otimes t-x'\otimes t').
%%\quad \mbox{ for } V\subset \er^n\times\er^m,
\end{align*}

\begin{cor}
\label{cor:decomp1}
Let $T$ be a subset of $\er^m$. Then for any $r>0$ there exists a decomposition
$T-T=\bigcup_{i=1}^N T_i$ such that, $N\leq e^{Cr}$ and
\[
\sup_{t,t'\in T_i}\Ex\alpha_A(G_n\otimes (t-t'))\leq
\min\left\{r^{-1/4}s_A(T),r^{-1/2}s_A(T)+C\sqrt{s_A(T)\Delta_{A,\infty}(T)}\right\}.
%%r^{-1/4}\Ex\sup_{t\in T}\sum_{ijk}a_{ijk}g_ig'_jt_k.
\]
\end{cor}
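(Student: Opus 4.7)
The plan is to apply Talagrand's Sudakov-type bound for chaoses (Theorem \ref{thm:minchaos2d}) to the class of matrices
\[
\mathcal{A} = \{B_w : w \in T-T\}, \qquad (B_w)_{ij} = \sum_k a_{ijk} w_k,
\]
and then pull back a $d_2$-covering of $\mathcal{A}$ to a partition of $T-T$. The key initial reduction is that by Jensen's inequality,
\[
\Ex \alpha_A(G_n \otimes (t-t')) \le \bigl(\Ex \alpha_A(G_n \otimes (t-t'))^2\bigr)^{1/2} = \|B_{t-t'}\|_{\mathrm{HS}},
\]
so it suffices to partition $T-T$ into pieces of small $\|\cdot\|_{\mathrm{HS}}$-diameter, which is exactly the $d_2$-diameter on $\mathcal{A}$.

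Next, I would compute the two quantities that enter Theorem \ref{thm:minchaos2d}. Using that the process $t\mapsto \sum_{ijk} a_{ijk} g_i g_j' t_k$ is symmetric in distribution (flipping all $g_i$),
\[
\Ex \sup_{a\in\mathcal{A}} \sum_{ij} a_{ij} g_i g_j' = \Ex \sup_{t,t'\in T}\sum_{ijk} a_{ijk} g_i g_j'(t_k-t'_k) \le 2\, s_A(T),
\]
and by the triangle inequality on $\|\cdot\|_{\mathrm{op}}$,
\[
\mathrm{diam}(\mathcal{A}, d_\infty) = \sup_{w,w'\in T-T} \|B_{w-w'}\|_{\mathrm{op}} \le 2 \sup_{w \in T-T} \|B_w\|_{\mathrm{op}} = 2\,\Delta_{A,\infty}(T).
\]

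For the first bound in the minimum, I apply the first inequality of Theorem \ref{thm:minchaos2d}: choosing $\varepsilon \asymp s_A(T) r^{-1/4}$ gives $N(\mathcal{A}, d_2, \varepsilon) \le e^{Cr}$, and the preimage of the corresponding cover under $w \mapsto B_w$ is a decomposition $T-T = \bigcup_i T_i^{(1)}$ with $\sup_{u,v\in T_i^{(1)}} \|B_{u-v}\|_{\mathrm{HS}} \le 2\varepsilon \lesssim r^{-1/4} s_A(T)$. For the second bound, I take
\[
\varepsilon = \max\bigl(C s_A(T) r^{-1/2},\ C' \sqrt{s_A(T)\Delta_{A,\infty}(T)}\bigr),
\]
with the constant $C'$ chosen so the hypothesis $\varepsilon > C \sqrt{\mathrm{diam}(\mathcal{A}, d_\infty)\cdot \Ex \sup_{a \in \mathcal{A}} \sum a_{ij} g_i g_j'}$ of the second inequality of Theorem \ref{thm:minchaos2d} holds; this yields $N(\mathcal{A}, d_2, \varepsilon) \le e^{Cr}$ and a decomposition $T-T = \bigcup_j T_j^{(2)}$ of $\|\cdot\|_{\mathrm{HS}}$-diameter at most $2\varepsilon \lesssim r^{-1/2}s_A(T) + \sqrt{s_A(T)\Delta_{A,\infty}(T)}$.

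To get a single decomposition meeting both bounds I take the common refinement $\{T_i^{(1)} \cap T_j^{(2)}\}_{i,j}$, which has at most $e^{2Cr}$ pieces; absorbing the factor $2$ into $C$ (equivalently, running the above argument with $r/2$ in place of $r$) gives the stated bound $N \le e^{Cr}$. The main technical nuisance is the bookkeeping needed to verify the hypothesis of the second inequality of Theorem \ref{thm:minchaos2d}, which forces the $\max$ in the choice of $\varepsilon$; the factor $2$ that arises from working with the difference set $T-T$ (rather than with $T$) is harmless and is absorbed into the universal constants.
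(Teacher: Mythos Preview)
Your proposal is correct and follows essentially the same approach as the paper: apply Theorem \ref{thm:minchaos2d} to the class $\mathcal{A}=\{(\sum_k a_{ijk}w_k)_{ij}: w\in T-T\}$, use $\Ex\alpha_A(G_n\otimes(t-t'))\le\|B_{t-t'}\|_{\mathrm{HS}}$, and note $\Ex\sup_{b\in\mathcal{A}}\sum b_{ij}g_ig_j'\le 2s_A(T)$ and $\mathrm{diam}(\mathcal{A},d_\infty)=2\Delta_{A,\infty}(T)$. The only minor difference is that your common-refinement step is unnecessary: since the claimed bound is a minimum of two quantities, for each fixed $r$ one of them is the smaller, and the decomposition coming from the corresponding part of Theorem \ref{thm:minchaos2d} already suffices.
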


\begin{proof}
We use Theorem \ref{thm:minchaos2d} with $\mathcal{A}=\{(\sum_{k}a_{ijk}t_k)_{ij}\colon\ t\in T-T\}$. It is enough to observe that
\[
\Ex\sup_{b\in \mathcal{A}}\left|\sum_{ij}b_{ij}g_ig_j'\right|=s_A(T-T)\leq 2s_A(T), \quad
\mathrm{diam}(\mathcal{A},d_{\infty})= 2\Delta_{A,\infty}(T)
\]
and
\[
\Ex \alpha_A(G_n\otimes (t-t')) \leq  \left\|\left(\sum_k a_{ijk}(t_k-t_k')\right)_{ij}\right\|_{HS}.
\]
\end{proof}

On the other hand the dual Sudakov minoration (cf. formula (3.15) in \cite{LT}) yields the following

\begin{cor}
\label{cor:decomp2}
Let $U$ be a subset of $B_2^n$. Then for any $r>0$ there exists a decomposition
$U=\bigcup_{i=1}^N U_i$ such that $N\leq e^{Cr}$ and
\[
\sup_{x,x'\in U_i}\beta_{A,T}(x-x')\leq r^{-1/2}s_A(T).
%%\Ex\beta_{A,T}(G_n).
\]
\end{cor}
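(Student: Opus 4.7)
My plan is to apply the dual Sudakov minoration (the Pajor--Tomczak-Jaegermann inequality, quoted as (3.15) in \cite{LT}) to the seminorm $\beta_{A,T}$ on $\er^n$. Recall the statement: for any seminorm $\|\cdot\|$ on $\er^n$ and any $\eta > 0$,
\[
\log N(B_2^n,\eta B_{\|\cdot\|}) \le C\eta^{-2}(\Ex\|G_n\|)^2,
\]
where $B_{\|\cdot\|} = \{x\colon \|x\|\le 1\}$. The quantity $\beta_{A,T}$ is a seminorm on $\er^n$ by linearity in $x$ and the triangle inequality for $\sup$; the inequality above is insensitive to whether $\|\cdot\|$ is a genuine norm (in case of a nontrivial kernel, one may pass to the quotient and note that the image of $B_2^n$ is bounded).

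I apply the inequality with $\|\cdot\| = \beta_{A,T}$ and $\eta = r^{-1/2}s_A(T)/2$. Since $\Ex\beta_{A,T}(G_n)=s_A(T)$ by definition, the right-hand side is at most $Cr$, yielding a covering of $B_2^n$ by at most $e^{Cr}$ closed $\beta_{A,T}$-balls of radius $r^{-1/2}s_A(T)/2$. Intersecting these balls with $U$ produces subsets $U_1,\ldots,U_N$ covering $U$, each of $\beta_{A,T}$-diameter at most $r^{-1/2}s_A(T)$, which is exactly the required bound. (If $s_A(T)=0$ then $\beta_{A,T}$ vanishes on a full-measure set of Gaussian realizations, hence vanishes identically, and the statement is trivial with $N=1$.)

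There is no real obstacle here apart from bookkeeping of universal constants: the only ingredient is the dual Sudakov inequality, which provides precisely the covering of the Euclidean ball in the geometry of a seminorm whose Gaussian mean width is $s_A(T)$. This is the natural linear counterpart of Corollary \ref{cor:decomp1}, where the quadratic Sudakov-type bound of Talagrand was used; here the relevant process $\sum_{ijk}a_{ijk}g_i x_j t_k$ is linear in $x$ for fixed Gaussian realization of $g$, so only the (easier) dual Sudakov minoration is needed.
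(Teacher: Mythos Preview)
Your proof is correct and follows exactly the approach the paper itself indicates: the corollary is stated as an immediate consequence of the dual Sudakov minoration (formula (3.15) in \cite{LT}), applied to the (semi)norm $\beta_{A,T}$ with $\Ex\beta_{A,T}(G_n)=s_A(T)$. You have simply written out the details that the paper leaves implicit.
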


Putting the above two corollaries together with Corollary \ref{cor:entrestch} we get the following decomposition of subsets of $B_2^n\times T$.

\begin{cor}
\label{cor:maindecomp}
Let $V\subset \er^n\times \er^m$ be such that $V-V\subset B_2^n\times (T-T)$. Then for $r\geq 1$ we may find a decomposition
$V=\bigcup_{i=1}^N((x_i,t_i)+V_i)$ such that $N\leq e^{Cr}$ and for each $1\leq i\leq N$,\\
i) $(x_i,t_i)\in V$, $V_i-V_i\subset V-V$, $V_i\subset B_2^n\times (T-T)$,\\
ii) $\sup_{(x,t)\in V_i}\beta_{A,T}(x)\leq r^{-1/2}s_A(T)$,\\
iii) $\sup_{(x,t)\in V_i}\Ex\alpha_A(G_n\otimes t)\leq
\min\left\{r^{-1/4}s_A(T),r^{-1/2}s_A(T)+C\sqrt{s_A(T)\Delta_{A,\infty}(T)}\right\}$,\\
iv) $\Delta_A(V_i)\leq \min\left\{r^{-3/4}s_A(T),r^{-1}s_A(T)+r^{-1/2}\sqrt{s_A(T)\Delta_{A,\infty}(T)}\right\}$.
\end{cor}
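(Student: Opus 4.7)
The plan is to orchestrate the three preceding corollaries: apply Corollary~\ref{cor:decomp2} to partition $B_2^n$, apply Corollary~\ref{cor:decomp1} to partition $T-T$, and then use Corollary~\ref{cor:entrestch} to refine the resulting product pieces into sets of small $d_A$-diameter. Since $V - V \subset B_2^n \times (T-T)$, fixing any $(x_0,t_0) \in V$ gives $V \subset (x_0,t_0) + B_2^n \times (T-T)$, so everything can be carried out inside a single translated product.

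First I would apply Corollary~\ref{cor:decomp2} with parameter $r$ to obtain $B_2^n = \bigcup_{k=1}^{N_1} U_k$ with $N_1 \le e^{Cr}$ and $\sup_{u,u' \in U_k}\beta_{A,T}(u-u') \le r^{-1/2}s_A(T)$, and in parallel Corollary~\ref{cor:decomp1} with parameter $r$ to obtain $T-T = \bigcup_{j=1}^{N_2} T_j$ with $N_2 \le e^{Cr}$ and
\[
  \sup_{s,s' \in T_j}\Ex\alpha_A(G_n \otimes (s-s')) \le \min\bigl\{r^{-1/4}s_A(T),\ r^{-1/2}s_A(T)+C\sqrt{s_A(T)\Delta_{A,\infty}(T)}\bigr\}.
\]
Form $W_{k,j} := V \cap [(x_0,t_0) + U_k \times T_j]$; this gives a partition of $V$ into at most $e^{Cr}$ pieces, and in each nonempty piece I pick an arbitrary base point $(x_{k,j},t_{k,j}) \in W_{k,j}$. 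For any $(x,t) \in W_{k,j} - (x_{k,j},t_{k,j})$ one has $x \in U_k - U_k$ and $t \in T_j - T_j$, so conditions~(ii) and~(iii) for the translated pieces are inherited directly from the bounds above.

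To obtain the diameter bound~(iv) I would further refine each translated piece $W_{k,j} - (x_{k,j},t_{k,j})$ by applying Corollary~\ref{cor:entrestch} to $(U_k - U_k) \times (T_j - T_j)$ with $\ve := r^{-1/2}$. The three terms appearing in the radius formula of that corollary are controlled by the trivial bounds $\Ex\beta_{A,T_j-T_j}(G_n) \le 2 s_A(T)$, $\sup_{x \in U_k-U_k}\beta_{A,T_j-T_j}(x) \le 2r^{-1/2}s_A(T)$, together with the bound on $\sup_{t \in T_j - T_j}\Ex\alpha_A(G_n \otimes t)$ from the previous step. Using $r \ge 1$ to absorb $r^{-1}$ terms into $r^{-3/4}$ terms, the covering radius reduces to
\[
  C\min\bigl\{r^{-3/4}s_A(T),\ r^{-1}s_A(T)+r^{-1/2}\sqrt{s_A(T)\Delta_{A,\infty}(T)}\bigr\},
\]
with at most $e^{Cr}$ balls per piece. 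Intersecting each ball with the translated piece, choosing a base point in each nonempty intersection, and re-indexing gives the final partition of $V$ with $N \le e^{Cr} \cdot e^{Cr} \cdot e^{Cr} = e^{Cr}$ pieces.

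Condition~(i) is automatic from the construction, since each $(x_i,t_i)$ is chosen from the corresponding piece of $V$, and $V_i \subset V - (x_i,t_i) \subset V - V \subset B_2^n \times (T-T)$, which also yields $V_i - V_i \subset V - V$. I expect the main issue to be bookkeeping rather than a genuine difficulty: one has to match the parameters across the three tools so that the total number of pieces remains $e^{Cr}$, and the choice $\ve = r^{-1/2}$ in Corollary~\ref{cor:entrestch} has to reproduce exactly the minimum appearing in~(iv) after the three radius terms are expressed in terms of $s_A(T)$ and $\Delta_{A,\infty}(T)$.
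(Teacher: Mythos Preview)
Your proposal is correct and follows essentially the same route as the paper's proof: translate so that $V$ sits inside $B_2^n\times(T-T)$, apply Corollaries~\ref{cor:decomp2} and~\ref{cor:decomp1} to split the two factors, pick base points in the nonempty product pieces, and then refine each translated piece with Corollary~\ref{cor:entrestch} at scale $\ve\sim r^{-1/2}$ to get the diameter bound~(iv). The only cosmetic differences are that the paper assumes $(0,0)\in V$ rather than carrying an explicit $(x_0,t_0)$, and applies the entropy estimate to the single translates $U_i-x_{ij}$, $T_j-y_{ij}$ instead of the full difference sets $U_k-U_k$, $T_j-T_j$; your constants (e.g.\ the factor $2$ versus $4$ in the bound for $\Ex\beta_{A,T_j-T_j}(G_n)$) are slightly off but absorbed by $C$.
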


\begin{proof}
The assertion is invariant under translations  of the set $V$ thus we may assume that $(0,0)\in V$ and so $V\subset V-V\subset B_2^n\times (T-T)$.
By Corollaries \ref{cor:decomp1} and \ref{cor:decomp2} we may decompose $B_2^n=\bigcup_{i=1}^{N_1}U_i$, $T-T=\bigcup_{i=1}^{N_2} T_i$ in such a
way that $N_1,N_2\leq e^{Cr}$ and
\begin{align*}
\sup_{x,x'\in U_i}\beta_{A,T}(x-x')&\leq r^{-1/2}s_A(T),
\\
\sup_{t,t'\in T_i}\Ex\alpha_A(G_n\otimes (t-t'))&\leq\min\left\{r^{-1/4}s_A(T),r^{-1/2}s_A(T)+C\sqrt{s_A(T)\Delta_{A,\infty}(T)}\right\}.
\end{align*}
Let $V_{ij}:=V\cap (U_i\times T_j)$. If $V_{ij}\neq \emptyset$ we take any point
$(x_{ij},y_{ij})\in V_{ij}$ and using   Corollary \ref{cor:entrestch} with $\ve=r^{ -1/2}/C$ we decompose
\[
V_{ij}-(x_{ij},y_{ij})=\bigcup_{k=1}^{N_3}V_{ijk}
\]
in such a way that $N_3\leq e^{Cr}$ and
\begin{align*}
&\Delta_A(V_{ijk})
\\
&\leq \frac{1}{C} \Bigg(r^{-1}s_A(T)+r^{-1/2} \sup_{x'\in U_{i}} \beta_{A,T}(x'-x_{ij})
+r^{-1/2}\sup_{y'\in T_j} \Ex \alpha_A (G_n\otimes (y'-y_{ij})) \Bigg)
%\\
%&\leq \frac{1}{C} \left(r^{-1}\Ex\beta_{A,T}(G_n)+r^{-1/2} \sup_{x,x'\in U_i}\beta_{A,T}(x-x')+r^{-1/2}\sup_{t,t'\in T_i}\Ex\alpha_A(G_n\otimes (t-t'))  \right)
\\
&\leq \min\left\{r^{-3/4}{ s_A(T)},r^{-1}s_A(T)+r^{-1/2}\sqrt{s_A(T)\Delta_{A,\infty}(T)}\right\}.
\end{align*}
The final decomposition is obtained by relabeling of the decomposition $V = \bigcup_{ijk} ((x_{ij},y_{ij}) + V_{ijk})$.
\end{proof}

\begin{rem}
\label{rem:decomp}
We may also use a trivial bound in iii):
\[
\sup_{(x,t)\in V_i}\Ex\alpha_A(G_n\otimes t)\leq \sup_{t,t'\in T}\Ex\alpha_A(G_n\otimes (t-t'))
\leq 2\sup_{t\in T}\Ex\alpha_A(G_n\otimes t),
\]
this will lead to the following bound in iv):
\[
\Delta_A(V_i)\leq r^{-1}s_A(T)+r^{-1/2}\sup_{t\in T}\Ex\alpha_A(G_n\otimes t).
\]
\end{rem}

\begin{rem}
\label{sud:decomp}
By using Sudakov minoration instead of Theorem \ref{thm:minchaos2d} we may decompose the set $T=\bigcup_{i=1}^N T_i$, $N\leq \exp(Cr)$ in such a way that
$$\forall_{i \leq N} \sup_{t,t' \in T_i}\Ex\alpha_A(G_n\otimes (t-t'))\leq r^{-1/2}\Ex \sup_{t \in T} \sum_{ijk} a_{ijk}g_{ij}t_k.$$
This will lead to the following bounds in iii) and iv):
\begin{align*}
\sup_{(x,t)\in V_i}\Ex\alpha_A(G_n\otimes t) &\leq  r^{-1/2} \Ex \sup_{t \in T} \sum_{ijk} a_{ijk}g_{ij}t_k \\
\Delta_A(V_i)&\leq r^{-1} \left( \Ex \sup_{t \in T} \sum_{ijk} a_{ijk}g_{ij}t_k+s_A(T) \right).
\end{align*}
\end{rem}

\begin{lem}
\label{lem:translate}
Let $V$ be a subset of $B_2^n\times (T-T)$. Then for any $(y,s)\in \er^n\times \er^m$ we have
\[
F_A(V+(y,s))\leq F_A(V)+2\beta_{A,T}(y)+C\Ex\alpha_A(G_n\otimes s).
\]
\end{lem}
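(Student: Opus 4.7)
My plan is to expand the bilinear factor $(x_j+y_j)(t_k+s_k)$ into four pieces and control each via subadditivity of the supremum, together with the hypothesis $V\subset B_2^n\times(T-T)$. From
\begin{align*}
\sum_{ijk}a_{ijk}g_i(x_j+y_j)(t_k+s_k)
&= \sum_{ijk}a_{ijk}g_ix_jt_k + \sum_{ijk}a_{ijk}g_ix_js_k \\
&\quad + \sum_{ijk}a_{ijk}g_iy_jt_k + \sum_{ijk}a_{ijk}g_iy_js_k,
\end{align*}
taking $\sup_{(x,t)\in V}$ and then expectation and distributing as $\Ex\sup\le\sum\Ex\sup$, the first piece contributes $F_A(V)$, and the fourth piece contributes $0$, being a centered linear combination of the $g_i$ not depending on $(x,t)$.

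For the $yt$ cross term, every $t$ with $(x,t)\in V$ can be written as $t'-t''$ with $t',t''\in T$, so
\[
\sup_{(x,t)\in V}\sum_{ijk}a_{ijk}g_iy_jt_k \le 2\sup_{t\in T}\Bigl|\sum_{ijk}a_{ijk}g_iy_jt_k\Bigr|,
\]
which yields $2\beta_{A,T}(y)$ after expectation. The $xs$ cross term $\Ex\sup_{(x,t)\in V}\sum_{ijk}a_{ijk}g_ix_js_k$ is handled by projecting $V$ onto its first coordinate (a subset of $B_2^n$) and invoking $\ell_2^n$ duality: for each Gaussian realization the inner supremum is bounded by
\[
\sup_{\|x\|_2\le 1}\sum_j x_j\Bigl(\sum_{ik}a_{ijk}g_is_k\Bigr) = \|BG_n\|_2,
\]
where $B$ is the matrix with entries $B_{ji}=\sum_k a_{ijk}s_k$ and $G_n=(g_1,\ldots,g_n)$. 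Unravelling the definition of $\alpha_A$ gives $\alpha_A(G_n\otimes s)=\|B^TG_n\|_2$. Since $B^TB$ and $BB^T$ share the same nonzero spectrum (as seen from the singular value decomposition of $B$), the random variables $G_n^TB^TBG_n$ and $G_n^TBB^TG_n$ have the same distribution, and therefore $\Ex\|BG_n\|_2=\Ex\|B^TG_n\|_2=\Ex\alpha_A(G_n\otimes s)$. Summing the four contributions produces the asserted bound.

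The principal obstacle is precisely this $xs$ cross term: a priori the supremum over $x\in B_2^n$ produces an $\ell_2$ norm indexed by the second coordinate $j$, whereas $\alpha_A$ contracts over the first index $i$ of the triple tensor $(a_{ijk})$. The symmetry identity $\Ex\|BG_n\|_2=\Ex\|B^TG_n\|_2$, equivalent to the fact that the Gaussian width of an ellipsoid depends only on its singular values, is the key observation that matches these two seemingly different Gaussian quantities and closes the proof.
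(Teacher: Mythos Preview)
Your proof is correct and follows the same decomposition as the paper: expand $(x+y)\otimes(t+s)$ into four terms, note that the $ys$ term has zero expectation, bound the $yt$ term by $2\beta_{A,T}(y)$ using $t\in T-T$, and control the $xs$ term by $\Ex\alpha_A(G_n\otimes s)$.

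The only difference is in the $xs$ step. The paper passes to the second moment, computes $(\Ex\sup_{\|x\|_2\le 1}|\sum_{ijk}a_{ijk}g_ix_js_k|^2)^{1/2}=(\sum_{ij}(\sum_k a_{ijk}s_k)^2)^{1/2}=(\Ex\alpha_A(G_n\otimes s)^2)^{1/2}$, and then invokes the Gaussian comparison of moments $\|\cdot\|_2\le C\|\cdot\|_1$. You instead observe directly that $\|BG_n\|_2$ and $\|B^TG_n\|_2$ have the same distribution (both equal in law to $(\sum_i\sigma_i^2 g_i^2)^{1/2}$ for the singular values $\sigma_i$ of $B$), which gives the bound with constant $1$ rather than a generic $C$. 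Your route is slightly sharper and avoids the appeal to moment comparison; the paper's route is perhaps more robust in that it would generalize immediately to other norms in place of $\|\cdot\|_2$.
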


\begin{proof}
We have
\[
F_A(V+(y,s))
\leq F_A(V)+\Ex\sup_{(x,t)\in V}\sum_{ijk}a_{ijk}g_iy_jt_k
+\Ex\sup_{(x,t)\in V}\sum_{ijk}a_{ijk}g_ix_js_k.
\]
Obviously,
\[
\Ex\sup_{(x,t)\in V}\sum_{ijk}a_{ijk}g_iy_jt_k
\leq\Ex \sup_{t,t'\in T}\left|\sum_{ijk}a_{ijk}g_iy_j(t_k-t'_k)\right|\leq 2\beta_{A,T}(y).
\]
Moreover,
\begin{align*}
\Ex\sup_{(x,t)\in V}\sum_{ijk}a_{ijk}g_ix_js_k
& \leq \left(\Ex\sup_{x\in B_2^n}\left|\sum_{ijk}a_{ijk}g_ix_js_k\right|^2\right)^{1/2}=\left(\sum_{ij}\left(\sum_k a_{ijk}s_k\right)^2\right)^{1/2}\\
&=(\Ex\alpha_A(G_n\otimes s)^2)^{1/2}\leq C\Ex\alpha_A(G_n\otimes s),
\end{align*}
where in the second inequality we used the comparison of moments of Gaussian variables
\cite[Corollary 3.2]{LT}.
\end{proof}

\begin{prop}
\label{osz:main}
For any nonempty finite set $T$ in $\er^m$ and $p\geq 1$ we have
\begin{multline}
F_A(B_2^n\times T)\leq C\Bigg(\frac{\log (ep)}{\sqrt{p}}s_A(T)+\sup_{\|x\|_2\leq 1}\beta_{A,T}(x)\\
+\sup_{t\in T}\Ex\alpha_A(G_n\otimes t)+\log (ep)\sqrt{p}\Delta_A(B_2^n\times T)\Bigg), \label{wzor1}
\end{multline}
\begin{multline}
F_A(B_2^n\times T)\leq C\Bigg(\frac{1}{\sqrt{p}}s_A(T)+\sup_{\|x\|_2\leq 1}\beta_{A,T}(x)\\
+\log(ep)\sup_{t\in T}\Ex\alpha_A(G_n\otimes t)+\sqrt{p}\Delta_A(B_2^n\times T)\Bigg), \label{wzor2}
\end{multline}
\begin{multline}
F_A(B_2^n\times T)\leq C\Bigg(\frac{1}{\sqrt{p}}s_A(T)+\frac{1}{\sqrt{p}}\Ex \sup_{t \in T} \sum a_{ijk} g_{ij} t_k+\sup_{\|x\|_2\leq 1}\beta_{A,T}(x) \\
+\sup_{t\in T}\Ex\alpha_A(G_n\otimes t)+\sqrt{p}\Delta_A(B_2^n\times T)\Bigg). \label{wzor3}
\end{multline}
\end{prop}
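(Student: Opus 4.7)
The plan is to prove each of \eqref{wzor1}--\eqref{wzor3} by a chaining argument of the type developed in \cite{LaAoP}, combining the decomposition from Corollary \ref{cor:maindecomp} (or its variant in Remark \ref{sud:decomp} for \eqref{wzor3}) with Lemma \ref{lem:translate} to handle the bilinear structure of $(x,t) \mapsto G_{(x,t)} = \sum_{ijk} a_{ijk} g_i x_j t_k$. Fix $p \geq 1$. After a preliminary rescaling ensuring $V_0 - V_0 \subset B_2^n \times (T-T)$, I would construct nested partitions $\mathcal{P}_0 \prec \mathcal{P}_1 \prec \cdots$ of $V_0 = B_2^n \times T$ by iteratively applying Corollary \ref{cor:maindecomp} to each piece with parameters growing geometrically from $r_0 \sim p$ (say $r_k = p \cdot 4^k$), so that $|\mathcal{P}_k| \leq \exp(Cr_k)$. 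Each piece $E \in \mathcal{P}_k$ is represented as $E = (y_E, s_E) + V_E$ with $V_E \subset B_2^n \times (T-T)$ obeying the bounds ii)--iv) at level $r_k$.

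The main estimate then comes from iterating Lemma \ref{lem:translate} up the refinement tree, coupled with Gaussian concentration for the max of $|\mathcal{P}_{k+1}| \le e^{Cr_{k+1}}$ suprema of Gaussian processes. At each level the translation lemma gives
\[
F_A((y,s) + V) \le F_A(V) + 2\beta_{A,T}(y) + C\Ex\alpha_A(G_n \otimes s),
\]
while the Gaussian concentration contributes $C\sqrt{r_{k+1}}\,\delta_k$, where $\delta_k$ is the diameter bound from iv). Telescoping from $k=0$ to a cutoff $K \sim \log(ep)$ accumulates the translation corrections: the root level contributes the terms $\sup_{\|y\|_2\le 1}\beta_{A,T}(y)$ and $\sup_{t\in T}\Ex\alpha_A(G_n\otimes t)$ appearing in the statement, while finer levels contribute amounts bounded (via ii), iii)) by $r_k^{-1/2}s_A(T)$ and by the relevant iii)-alternative; the chaining sum $\sum_k \sqrt{r_{k+1}}\delta_k$ is computed directly; and the residual $F_A(\mathcal{P}_K)$ is absorbed into the $\sqrt{p}\Delta_A(B_2^n\times T)$ term via a direct Gaussian concentration bound at the finest level.

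The three forms \eqref{wzor1}, \eqref{wzor2}, \eqref{wzor3} correspond to three different choices in the decomposition. For \eqref{wzor1} I would use the first alternatives in iii) and iv) of Corollary \ref{cor:maindecomp} ($r_k^{-1/4}s_A(T)$ and $r_k^{-3/4}s_A(T)$ respectively); the cutoff $K \sim \log(ep)$ produces the $\log(ep)$ factor on the $s_A(T)/\sqrt{p}$ term (arising from the accumulated iii)-corrections across the levels) and on the $\sqrt{p}\Delta_A$ term (from controlling the residual at $\mathcal{P}_K$). For \eqref{wzor2} I would use the trivial iii)-bound from Remark \ref{rem:decomp} together with the $\delta_k = r_k^{-1}s_A(T) + r_k^{-1/2}\sup_{t\in T}\Ex\alpha_A(G_n\otimes t)$ alternative there, which shifts the $\log(ep)$ factor onto the $\sup_t\Ex\alpha_A(G_n\otimes t)$ term while letting the $s_A(T)/\sqrt{p}$ and $\sqrt{p}\Delta_A$ terms appear without logarithms. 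Finally, \eqref{wzor3} is obtained analogously but with Remark \ref{sud:decomp} replacing Corollary \ref{cor:decomp1}, introducing the additional quantity $\tfrac{1}{\sqrt{p}}\Ex\sup_{t\in T}\sum a_{ijk}g_{ij}t_k$ in exchange for eliminating the logarithmic factor.

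The main technical obstacle is the delicate bookkeeping required to track the accumulated correction terms through the many levels of chaining, and in particular the precise choice of cutoff $K$ and growth rate of $r_k$ needed to balance the translation corrections from Lemma \ref{lem:translate} (which, combined over $\sim\log(ep)$ levels, produce the logarithmic factors or the extra term in \eqref{wzor3}) against both the chaining sum $\sum_k\sqrt{r_{k+1}}\delta_k$ and the residual $F_A(\mathcal{P}_K)$, so as to match exactly the form of each stated inequality.
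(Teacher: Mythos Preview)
Your overall framework is correct and matches the paper's: chain via Corollary \ref{cor:maindecomp} with parameters growing geometrically from $r\sim p$, use Lemma \ref{lem:translate} for the translation corrections, and Gaussian concentration to pass to the max over pieces. Your descriptions for \eqref{wzor2} and \eqref{wzor3} are also accurate.

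There is, however, a genuine gap in your plan for \eqref{wzor1}. Using only the first alternatives in iii)/iv) (i.e.\ $r_k^{-1/4}s_A(T)$ and $r_k^{-3/4}s_A(T)$) with $r_k=p\cdot 4^k$ yields contributions of order $p^{-1/4}s_A(T)$, since $\sum_{k\ge 0} r_k^{-1/4}\sim p^{-1/4}$ and $\sum_{k\ge 0}\sqrt{r_{k+1}}\,r_k^{-3/4}\sim p^{-1/4}$. For large $p$ one has $p^{-1/4}\gg \log(ep)\,p^{-1/2}$, so this does \emph{not} give the bound stated in \eqref{wzor1}; in particular the $\log(ep)$ factor does not arise as you describe. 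What the paper actually does for \eqref{wzor1} is use the \emph{second} alternative in iii)/iv) (the one involving $\sqrt{s_A(T)\Delta_{A,\infty}(T)}$) for the first $l_0\sim\log_2 p$ levels: summing this constant term over $l_0$ levels gives $l_0\sqrt{s_A(T)\Delta_{A,\infty}(T)}\le \log(ep)\bigl(p^{-1/2}s_A(T)+\sqrt{p}\,\Delta_{A,\infty}(T)\bigr)$ via AM--GM, which is exactly where the two $\log(ep)$ factors come from. Only for levels $l>l_0$ (where $r_l\gtrsim p^2$, hence $r_l^{-1/4}\lesssim p^{-1/2}$) does the paper switch to the first alternative to obtain a convergent tail.

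A smaller point: the paper does not use a hard cutoff at level $K$. It inducts on the cardinality $m$ of finite subsets $V$ (so $c(l,1)=0$ trivially) and sums over all $l\ge 0$, relying on the tail $l>l_0$ being summable with the first alternative. Your claim that the residual $F_A(\mathcal{P}_K)$ is absorbed into $\sqrt{p}\,\Delta_A$ by ``direct Gaussian concentration'' is not justified as stated: $F_A$ of a piece is an expected supremum of a Gaussian process and is not controlled by its $d_A$-diameter alone. The two-regime scheme (second alternative up to $l_0$, first alternative beyond) is precisely what replaces this step.
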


\begin{proof}
First we prove \eqref{wzor1}
Let $l_0\in \ensuremath{\mathbb N}$ be such that $2^{l_0-1}\leq p< 2^{l_0}$. Define
\[
\Delta_0:=\Delta_A(B_2^n\times T),\quad \tilde{\Delta}_0:=\sup_{x\in B_2^n}\beta_{A,T}(x)
+\sup_{t\in T}\Ex\alpha_A(G_n\otimes t),
\]
\[
\Delta_l=2^{-3l/4}p^{-3/4}s_A(T),\quad \tilde{\Delta}_l=2^{-l/4}p^{-1/4}s_A(T), \quad l>l_0.
\]
and for $1\leq l\leq l_0$,
\begin{align*}
\Delta_l&:=2^{-l}p^{-1}s_A(T)+2^{-l/2}p^{-1/2}\sqrt{s_A(T)\Delta_{A,\infty}(T)},\\
\tilde{\Delta}_l&:=2^{-l/2}p^{-1/2}s_A(T)+C\sqrt{s_A(T)\Delta_{A,\infty}(T)}.
\end{align*}

Let for $l=0,1,\ldots$ and $m=1,2,\ldots$
\begin{align*}
c(l,m):=\sup\big\{F_A(V)\colon\
&V-V\subset B_2^n\times (T-T),\#V\leq m,
\\
&\Delta_A(V)\leq \Delta_l,
\sup_{(x,t)\in  V}(\beta_{A,T}(x)+\Ex\alpha_A(G_n\otimes t))\leq 2\tilde{\Delta}_l\big\}.
\end{align*}

Obviously $c(l,1)=0$. We will show that for $m>1$ and $l\geq 0$ we have
\begin{equation}
\label{eq:iteration}
c(l,m)\leq c(l+1,m-1)+C\left(2^{l/2}\sqrt{p}\Delta_l+\tilde{\Delta}_l\right).
\end{equation}
To this end take any set $V$ as in the definition of $c(l,m)$ and apply to it
Corollary \ref{cor:maindecomp} with $r=2^{l+1}p$ to obtain decomposition
$V=\bigcup_{i=1}^N ((x_i,t_i)+V_i)$. We may obviously assume that all $V_i$ have smaller cardinality than $V$. Conditions i)-iv) from Corollary  \ref{cor:maindecomp} easily imply that $F_A(V_i)\leq c(l+1,m-1)$.

Gaussian concentration (cf. \cite[Lemma 3]{LaAoP}) yields
\[
F_A(V)=F_A\left(\bigcup_{i}((x_i,t_i)+V_i)\right)\leq
C\sqrt{\log N}\Delta_A(V)+\max_{i}F_A((x_i,t_i)+V_i).
\]
Estimate \eqref{eq:iteration} follows since
\[
\sqrt{\log N}\Delta_A(V)\leq C2^{l/2}\sqrt{p}\Delta_l
\]
and for each $i$ by Lemma \ref{lem:translate} we have (recall that $(x_i,t_i)\in V$)
\[
F_A((x_i,t_i)+V_i)\leq F_A(V)+2\beta_{A,T}(x_i)+C\Ex\alpha_A(G_n\otimes t_i)\leq
c(l+1,m-1)+C\tilde{\Delta}_l.
\]

Hence
\begin{align*}
c(0,m)
&\leq C\left(\sum_{l=0}^{\infty} 2^{l/2}\sqrt{p}\Delta_l+\sum_{l=0}^\infty \tilde{\Delta}_l\right)
\\
&\leq C\left(\sqrt{p}\Delta_0+\tilde{\Delta}_0 +\frac{1}{\sqrt{p}}s_A(T)+l_0\sqrt{s_A(T)\Delta_{A,\infty}(T)}+
2^{-l_0/4}p^{-1/4}s_A(T)\right).
\end{align*}

Since $\log_2 p < l_0 \leq \log_2 p +1$ and
$\sqrt{s_A(T)\Delta_{A,\infty}(T)}\leq \frac{1}{\sqrt{p}}s_A(T)+\sqrt{p}\Delta_{A,\infty}(T)$  and clearly $\Delta_{A,\infty}(T)\leq \Delta_A(B_2^n\times T)$ we get for all $m \ge 1$,
\begin{multline*}
c(0,m)\leq  C\Bigg(\frac{\log (ep)}{\sqrt{p}}s_A(T)+\sup_{\|x\|_2\leq 1}\beta_{A,T}( x)\\
+\sup_{t\in T}\Ex\alpha_A(G_n\otimes t)+\log (ep)\sqrt{p}\Delta_A(B_2^n\times T)\Bigg).
\end{multline*}

To conclude the proof of \eqref{wzor1} it is enough to observe that
\[
F_A(B_2^n\times T)=2F_A\left(\frac{1}{2}B_2^n\times T\right)\leq 2\sup_{m\geq 1}c(0,m).
\]

The proofs of \eqref{wzor2} and \eqref{wzor3} are the same as the proof of \eqref{wzor1}. The only difference is that for
$1\leq l\leq l_0$  we  change the definitions of $\Delta_l$, $\tilde{\Delta}_l$ and we use Remarks \ref{rem:decomp} and \ref{sud:decomp} respectively. In the first case we take
\begin{align*}
\Delta_l&:=2^{-l}p^{-1}s_A(T)+2^{-l/2}p^{-1/2}\sup_{t \in T} \Ex \alpha_A(G_n \otimes t) \\
\tilde{\Delta}_l&:=2^{-l/2}p^{-1/2}s_A(T)+\sup_{t \in T} \Ex \alpha_A(G_n \otimes t),
\end{align*}
while in the second
\begin{align*}
\Delta_l:=2^{-l}p^{-1}\left(s_A(T)+\Ex \sup_{t\in T}\sum_{ijk} a_{ijk} g_{ij}t_k\right)\\
\tilde{\Delta}_l=2^{-l/2}p^{-1/2}\left( s_A(T)+\Ex \sup_{t\in T}\sum_{ijk} a_{ijk} g_{ij}t_k\right).
\end{align*}
\end{proof}

\begin{rem}
Note that the proof of Proposition \ref{osz:main} gives in fact the following estimate for any set $U \subset B_2^n\times T$. For any $p \ge 1$,
\begin{multline}
F_A(U)\leq C\Bigg(\frac{1}{\sqrt{p}}s_A(T)+\frac{1}{\sqrt{p}}\Ex \sup_{t \in T} \sum a_{ijk} g_{ij} t_k+\sup_{\|x\|_2\leq 1}\beta_{A,T}(x) \\
+\sup_{t\in T}\Ex\alpha_A(G_n\otimes t)+\sqrt{p}\Delta_A(U)\Bigg). \label{wzor-wzor}
\end{multline}
Indeed, as in the proof above one can reduce the problem to the case of $U \subset \frac{1}{2}B_2^n \times T$ and then it is enough to set $\Delta_0 = \Delta_A(U)$ and add the condition $V-V \subset U-U$ to the definition of $c(l,m)$.
\end{rem}

\section{Proofs of main results}\label{sec:proofs-of-main-results}

\begin{proof}[Proof of Theorems \ref{thm:uppper2d1} and \ref{thm:upper2d2}]
By Lemmas \ref{lem:lower1}, \ref{lem:diag} and Proposition \ref{prop:red} we need only to establish
\eqref{eq:upperest1}-\eqref{eq:upperest3} with $\|\sum_{ij}a_{ij}(g_ig_j-\delta_{ij})\|_p$ replaced by
$\sqrt{p}\Ex\sup_{\|x\|_2\leq 1}\|\sum_{i\neq j}a_{ij}g_ix_j\|$. We may assume that $F=\er^m$ and by \eqref{eq:normopwithoutdiag} also that $a_{ii}=0$, so
taking for $T$ the unit ball in the dual space $F^*$ we have

\[
\left\|\sum_{i\neq j}a_{ij}g_ix_j\right\|=\sup_{t\in T}\sum_{ijk}a_{ijk}g_ix_jt_k.
\]
Then, using the notation introduced in Section \ref{sec:supgauss},
\[
\Ex\sup_{\|x\|_2\leq 1}\left\|\sum_{i\neq j}a_{ij}g_ix_j\right\|=F_A(B_2^n\times T),\quad
\Ex\left\|\sum_{i\neq j}a_{ij}g_ix_j\right\|=\beta_{A,T}(x),
\]
\[
\sup_{\|(x_{ij})\|_2\leq 1}\left\|\sum_{ij}a_{ij}x_{ij}\right\|=\sup_{t\in T}(\Ex\alpha_A^2(G_n\otimes t))^{1/2}
\sim \sup_{t\in T}\Ex\alpha_A(G_n\otimes t),
\]
\[
\sup_{\|x\|_2\leq 1,\|y\|_2\leq 1}\left\|\sum_{ij}a_{ij}x_iy_j\right\|\sim \Delta_A(B_2^n\times T),
\]
\[
\Ex\left\|\sum_{i\neq j}a_{ij}g_{ij}\right\|=\Ex\sup_{t\in T}\sum_{ijk}a_{ijk}g_{ij}t_k\quad \mbox{and}\quad
\Ex\left\|\sum_{i\neq j}a_{ij}g_{i}g_j\right\|\sim s_A(T),
\]
where the last estimate follows by decoupling.
We conclude the proof invoking Proposition \ref{osz:main}.
\end{proof}

\begin{proof}[Proof of Theorem \ref{thm:tails2d}]
The proof of the lower bound \eqref{eq:thm6-lower-bound} will rely on two well known results, which we will state here in special cases, sufficient for our goals. The first one is the Paley-Zygmund inequality (see e.g., \cite[Corollary 3.3.2]{delaPenaGine}), which asserts that for any non-negative random variable $\xi$, such that $0 < \Ex \xi^2 < \infty$ and for any $\lambda \in [0,1]$,
\begin{displaymath}
  \Pr(\xi > \lambda \Ex \xi) \ge (1-\lambda)^2\frac{(\Ex \xi)^2}{\Ex \xi^2}.
\end{displaymath}

The other result we will need is a comparison of moments for Gaussian quadratic forms (see e.g. \cite[Theorem 3.2.10]{delaPenaGine}), where the case of general polynomials is considered), which states that if $S=\|\sum_{ij}a_{ij}(g_ig_j-\delta_{ij})\|$, then for any $q > p > 1$,
\begin{displaymath}
  \|S\|_{q} \le \frac{q-1}{p-1}\|S\|_p.
\end{displaymath}

Passing to the proof of \eqref{eq:thm6-lower-bound}, set $p=C\min\{t^2/U^2,t/V\}$. Note that it is enough to prove \eqref{eq:thm6-lower-bound} under the additional assumption that $p \ge 2$, since the left-hand side of this inequality is non-increasing with $t$, while the right-hand side is bounded from above, so the case $p < 2$ can be easily reduced to $p = 2$ at the cost of increasing the constant $C$.

By the Paley-Zygmund inequality (applied to $\xi = S^p$) and comparison of moments of Gaussian quadratic forms  we have for $p\geq 2$,
\[
\Pr\left(S\geq \frac{1}{2}(\Ex|S|^p)^{1/p}\right)=\Pr\left(|S|^p\geq \frac{1}{2^p}\Ex S^p\right)
\geq \left(1-\frac{1}{2^p}\right)^2\frac{(\Ex S^p)^2}{\Ex S^{2p}}\geq C_1^{-2p}.
\]
Thus \eqref{eq:thm6-lower-bound} follows by Proposition \ref{prop:lower2d}.

To derive the upper bound we  use Theorem \ref{thm:upper2d2}, estimate $\Pr(S\geq e\|S\|_p)\leq e^{-p}$ for $p\geq 1$ and make an analogous substitution.

\end{proof}

\begin{proof}[Proof of Theorem \ref{thm:hw}]
Recall that for $r > 0$ the $\psi_r$-norm of a random variable $Y$ is defined as
\begin{displaymath}
  \|Y\|_{\psi_r} = \inf\Big\{a>0\colon \Ex\exp\Big(\Big(\frac{|Y|}{a}\Big)^r\Big) \le 2\Big\}.
\end{displaymath}
(formally for $r < 1$ this is a quasi-norm, but it is customary to use the name  $\psi_r$-norm for all $r$).
By \cite[Lemma 5.4]{AW} if $k$ is a positive integer and $Y_1,\ldots,Y_n$ are symmetric random variables such that $\|Y\|_{\psi_{2/k}} \le M$, then
\begin{align}\label{eq:comparison-psi-r}
  \left\|\sum_{i=1}^n a_i Y_i\right\|_p \le C_k M \left\|\sum_{i=1}^n a_i g_{i1}\cdots g_{ik}\right\|_p,
\end{align}
where $g_{ik}$ are i.i.d. standard Gaussian variables (we remark that the lemma in \cite{AW} is stated only for $F = \er$ but its proof, based on contraction principle, works in any normed space).

To prove the theorem we will again establish a moment bound and then combine it with Chebyshev's inequality. Similarly as in the Gaussian setting we will treat the diagonal and off-diagonal part separately.
Let $\varepsilon_1,\ldots,\varepsilon_n$ be a sequence of i.i.d. Rademacher variables independent of $X_i$'s. For $p\ge 1$ we have
\begin{displaymath}
  \left\|\sum_i a_{ii}(X_i^2-\Ex X_i^2)\right\|_p \le 2 \left\|\sum_i a_{ii}\varepsilon_i X_i^2\right\|_p \le C \alpha^2 \left\|\sum_i a_{ii} g_{i1}g_{i2}\right\|_p,
\end{displaymath}
where in the first inequality we used symmetrization and in the second one \eqref{eq:comparison-psi-r} together with the observation $\|\varepsilon_i X_i^2\|_{\psi_1} \le C\alpha^2$ (which can be easily proved by integration by parts).

Now by \eqref{ine:2},
\begin{align*}
\left\|\sum_i a_{ii} g_{i1}g_{i2}\right\|_p \le C \left\|\sum_i a_{ii}(g_i^2-1) \right\|_p,
\end{align*}
and thus
\begin{align}\label{eq:diagonal-subgaussian}
 \left\|\sum_i a_{ii}(X_i^2-\Ex X_i^2)\right\|_p \leq C \alpha^2 \left\|\sum_i a_{ii}(g_i^2-1) \right\|_p.
\end{align}
The estimate of the off-diagonal part is analogous, the only additional ingredient is decoupling. Denoting $(X_i')_{i=1}^n$ an independent copy of the sequence $(X_i)_{i=1}^n$ and by $(\varepsilon_i)_{i=1}^n$, $(\varepsilon_i')_{i=1}^n$ (resp. $(g_i)_{i=1}^n$, $(g_i')_{i=1}^n$ ) independent sequences of Rademacher (resp. standard Gaussian) random variables, we have
\begin{align}
  \left\|\sum_{i\neq j} a_{ij} X_iX_j\right\|_p &\sim \left\|\sum_{i\neq j} a_{ij} X_iX_j'\right\|_p \sim \left\|\sum_{i\neq j} a_{ij} \varepsilon_iX_i\varepsilon_i'X_j'\right\|_p \nonumber \\
  &\le C\alpha^2\left\|\sum_{i\neq j} a_{ij} g_ig_j'\right\|_p \sim \alpha^2 \left\|\sum_{i\neq j} a_{ij} g_ig_j\right\|_p,\label{eq:off-diagonal-subgaussian}
\end{align}
where in the first and last inequality we used decoupling, the second one follows from iterated conditional application of symmetrization inequalities and the third one from iterated conditional application of \eqref{eq:comparison-psi-r} (note that by integration by parts we have $\|\varepsilon_i X_i\|_{\psi_2} \le C\alpha$).

Combining inequalities \eqref{eq:diagonal-subgaussian} and \eqref{eq:off-diagonal-subgaussian} with Lemma \ref{lem:lower1} we obtain
\begin{displaymath}
  \left\|\sum_{ij} a_{ij} (X_iX_j - \Ex X_iX_j)\right\|_p \le C\alpha^2 \left\|\sum_{ij} a_{ij}(g_ig_j - \delta_{ij})\right\|_p.
\end{displaymath}
To finish the proof of the theorem it is now enough to invoke moment estimates of Theorem \ref{thm:upper2d2} and use Chebyshev's inequality in $L_p$.
\end{proof}

\paragraph{Acknowledgement}  We would like to thank Mark Veraar for pointing us to references concerning Pisier’s contraction
property.

\end{document}